\numberwithin{equation}{section}
\newcommand{\de}{\delta}
\newcommand{\si}{\sigma}
\newcommand{\ga}{\gamma}
\newcommand{\Om}{\Omega}
\newcommand{\la}{\lambda}
\newcommand{\ol}{\overline}
\newcommand{\no}{\nonumber}
\newcommand{\pl}{\partial}
\newcommand{\Si}{\Sigma}
\newcommand{\vep}{\varepsilon}
\newcommand{\al}{\alpha}
\newcommand{\avint}{\rlap{$-$}\!\int_{\Omega_0}}
\newcommand{\R}{\mathbb{R}}
\newtheorem{theorem}{Theorem}[section]
\newtheorem{lemma}{Lemma}[section]
\newtheorem{proposition}{Proposition}[section]
\newtheorem{remark}{Remark}[section]
\def\be{\begin{equation}}
\def\ee{\end{equation}}
\def\bge{\begin{eqnarray}}
\def\bgee{\begin{eqnarray*}}
\def\ege{\end{eqnarray}}
\def\egee{\end{eqnarray*}}
\begin{document}

\title{Dynamics of shadow system of a singular Gierer-Meinhardt system on an evolving domain}
\author{Nikos I. Kavallaris}
\address{
 Department of Mathematics, University of Chester, Thornton Science Park
Pool Lane, Ince, Chester  CH2 4NU, UK
}
\email{n.kavallaris@chester.ac.uk}

\author{Raquel Bareira}
\address{Barreiro School of Technology of the Polytechnic Institute of Setubal\\
Rua Americo da Silva Marinho-Lavradio, 2839-001 Barreiro, Portugal\\
and
CMAFcIO - Center of Mathematics, Fundamental Applications and Operations Research, University of Lisbon, Portugal }
\email{raquel.barreira@estbarreiro.ips.pt}

\author{Anotida Madzvamuse}
\address{School of Mathematical and Physical Sciences\\
Department of Mathematics\\
University of Sussex\\
Falmer, Brighton, BN1 9QH, England, UK}

\email{a.madzvamuse@sussx.ac.uk}

\subjclass{Primary: 35B44, 35K51 ; Secondary: 35B36, 92Bxx }

\keywords{Pattern formation, Turing instability, activator-inhibitor system, shadow-system, invariant regions, diffusion-driven blow-up, evolving domains}
\date{\today}
\maketitle
\begin{abstract}
 The main purpose  of the current paper is to contribute towards the comprehension of the dynamics of the shadow system of  a singular Gierer-Meinhardt model on an isotropically evolving domain.
In the case where the inhibitor's response to the activator's growth is rather weak, then the shadow system of the Gierer-Meinhardt model is reduced to a single though non-local equation whose dynamics is thoroughly investigated throughout the manuscript. The main focus is on the derivation of blow-up results for this non-local equation, which can be interpreted  as instability patterns of the shadow system. In particular, a {\it diffusion-driven  instability (DDI)}, or {\it Turing instability},  in the neighbourhood of a constant stationary solution, which then is destabilised  via diffusion-driven blow-up, is observed. The latter indicates the formation of some unstable patterns, whilst some stability results of global-in-time solutions towards non-constant steady states guarantee the occurrence of some stable patterns. Most of the derived results are confirmed numerically and also compared with the ones in the case of a stationary domain.
\end{abstract}

\subjclass{Primary: 35B44, 35K51 ; Secondary: 35B36, 92Bxx }

\keywords{Pattern formation, Turing instability, activator-inhibitor system, shadow-system, invariant regions, diffusion-driven blow-up, evolving domains}

\section{Introduction}
The purpose of the current work is to study an activator-inhibitor system, introduced by Gierer and Meinhard in 1972 \cite{gm72} to describe the phenomenon of morphogenesis in hydra, on an evolving domain. Assume that $u(x,t)$ stands for the concentration of the activator, at a spatial point $x\in \Omega_t\subset \R^N, N=1,2,3,$ at time $t\in[0,T], T>0,$ which enhances its own production and that of the inhibitor. On the other hand, let $v(x,t)$ represents the concentration of the inhibitor, which suppresses its own production as well as that of the activator. Hence, the interaction between $u$ and $v$ can be described by the following non-dimensionalised system \cite{gm72}
\bge
&&u_t+\nabla \cdot (\overrightarrow{\alpha} u)= D_1 \Delta u-u+\displaystyle\frac{u^p}{v^q}, \quad x\in \Omega_t,\; t\in (0,T), \label{egm1}\\
&&\tau v_t+\nabla \cdot (\overrightarrow{\alpha} v) = D_2 \Delta v-v+\displaystyle\frac{u^r}{v^s}, \quad \; x\in \Omega_t,\; t\in (0,T),
 \label{egm2}\\
 && \frac{\pl u}{\pl \nu}=\frac{\pl v}{\pl \nu}=0 \quad \; x\in \pl\Omega_t,\; t\in (0,T),\label{egm3}\\
 && u(x,0)=u_0(x)>0,\quad v(x,0)=v_0(x)>0, \quad x\in \Om_0\subset \R^N, \label{egm4}
\ege
where $\nu$ is the unit normal vector on $\pl \Om_t$, whereas $\overrightarrow{\alpha}\in \R^N$ stands for the convection velocity which is induced by the material deformation due to the evolution of the domain. Moreover, $D_1,D_2$ are the diffusion coefficients of the activator and inhibitor respectively; $\tau$ represents the response of the inhibitor to the activator's growth. Moreover, the exponents satisfying the conditions:
$p>1,\; q, r,>0,\;\mbox{and}\; s>-1,$ measure the interactions between morphogens. The dynamics of system \eqref{egm1}-\eqref{egm4}  can be characterised by two values: the net self-activation index $\pi=( p-1)/r$ and the net cross-inhibition
index $\gamma=q/(s+1).$ Index $\pi$ correlates the strength of self-activation of the activator
with the cross-activation of the inhibitor. Thus, if $\pi$ is large, then the net growth of the
activator is large no matter the growth of the inhibitor. The  parameter $\gamma$ measures how strongly
the inhibitor suppresses the production of the activator and that of itself. If $\gamma$ is large then
the production of the activator is strongly suppressed by the inhibitor. Finally, the parameter $\tau$
quantifies the inhibitor's response against the activator's growth.
Guided by biological interpretation as well as by mathematical reasons, we assume that the parameters $p, q, r, s $ satisfy the condition
\bge\label{tc}
p-r\gamma<1,
\ege
which in the literature is known as the {\it Turing condition} since it guarantees the occurrence of Turing patterns for the system \eqref{egm1}-\eqref{egm4} on a stationary domain \cite{nst06}.

For analytical purposes, in the current work we will only consider the case of an isotropic flow on an evolving domain, and thus  we have for any $x\in \Om_t$:
\bge
x=\rho(t) \xi,\quad\mbox{for}\quad \xi\in \Om_0\subset \R^N,\label{isf}
\ege
with $\rho(t)$ being  $C^1-$function with $\rho(0)=1.$ In the case of a growing domain we have $\dot{\rho}(t)=\frac{d \rho}{dt}>0,$ whilst when the domain shrinks or for domain contraction  $\dot{\rho}(t)=\frac{d \rho}{dt}<0.$  Furthermore,  the following equality holds
\bge
\frac{dx}{dt}=\overrightarrow{\alpha}(x,t).\label{cisf}
\ege
Setting
$
\hat{u}(\xi,t)=u(\rho(t)\xi,t),\; \hat{v}(\xi,t)=v(\rho(t)\xi,t),
$
and then using the chain rule as well as \eqref{isf} and \eqref{cisf}, see also \cite{mm07}, we obtain:
\bgee
&&\hat{u}_t-\overrightarrow{\alpha}\cdot \nabla_x u=u_t,\quad \nabla_x u=\frac{1}{\rho(t)}\nabla_{\xi}\hat{u}\\
&&\Delta_x u= \frac{1}{\rho^2(t)}\Delta_{\xi}\hat{u},\quad \nabla_x \cdot \left(\overrightarrow{\alpha} u\right)=\overrightarrow{\alpha}\cdot \nabla_x u+N\,u \frac{\dot{\rho}(t)}{\rho(t)},
\egee
whilst similar relations hold for $v$ as well. Therefore \eqref{egm1}-\eqref{egm4} is reduced to following system on a reference stationary domain $\Om_0$
\bge
&&\hat{u}_t= \frac{D_1}{\rho^2(t)} \Delta_{\xi} \hat{u}-\left(1+N\frac{\dot{\rho}(t)}{\rho(t)}\right)\hat{u}+\displaystyle\frac{\hat{u}^p}{\hat{v}^q}, \quad \xi\in \Omega_0,\; t\in (0,T), \label{regm1}\\
&&\tau \hat{v}_t=\frac{D_2}{\rho^2(t)} \Delta_{\xi} \hat{v}-\left(1+N\frac{\dot{\rho}(t)}{\rho(t)}\right)\hat{v}+\displaystyle\frac{\hat{u}^r}{\hat{v}^s}, \quad \; \xi\in \Omega_0,\; t\in (0,T),
 \label{regm2}\\
 && \frac{\pl \hat{u}}{\pl \nu}=\frac{\pl \hat{v}}{\pl \nu}=0 \quad \; \xi\in \pl\Omega_0,\; t\in (0,T),\label{regm3}\\
 && \hat{u}(\xi,0)=\hat{u}_0(\xi)>0,\quad \hat{v}(\xi,0)=\hat{v}_0(\xi)>0, \quad \xi\in \Om_0, \label{regm4}
\ege
where $\Delta_{\xi}$ represents the Laplacian on the reference static domain $\Om_0.$
Henceforth, without any loss of generality we will omit the index $\xi$ from the Laplacian.

Defining a new time scale \cite{L11},
\bge\label{aal2}
\si(t)=\int_0^t\frac{1}{\rho^2(\theta)}\,d\theta,
\ege
and setting $\tilde{u}(\xi,\sigma)=\hat{u}(\xi, t), \tilde{v}(\xi,\sigma)=\hat{v}(\xi, t),$ then system \eqref{regm1}-\eqref{regm4} can be written as
\bge
&&\tilde{u}_{\sigma}= D_1 \Delta_{\xi} \tilde{u}-\left(\phi^2(\si)+N\frac{\dot{\phi}(\si)}{\phi(\si)}\right)\tilde{u}+\phi^2(\si)\displaystyle\frac{\tilde{u}^p}{\tilde{v}^q}, \quad \xi\in \Omega_0,\; \si\in (0,\Sigma), \label{tregm1}\\
&&\tau \tilde{v}_{\si}=D_2 \Delta_{\xi}  \tilde{v}-\left(\phi^2(\si)+N\frac{\dot{\phi}(\si)}{\phi(\si)}\right)\tilde{v}+\phi^2(\si)\displaystyle\frac{\tilde{u}^r}{\tilde{v}^s}, \quad \; \xi\in \Omega_0,\; \si\in (0,\Sigma),
 \label{tregm2}\\
 && \frac{\pl \tilde{u}}{\pl \nu}=\frac{\pl \tilde{v}}{\pl \nu}=0, \quad \; \xi\in \pl\Omega_0,\; \si\in (0,\Sigma),\label{tregm3}\\
 && \tilde{u}(\xi,0)=\hat{u}_0(\xi)>0,\quad \tilde{v}(\xi,0)=\hat{v}_0(\xi)>0, \quad \xi\in \Om_0, \label{tregm4}
\ege
where $ \rho(t)=\phi(\si),$ and thus $\dot{\rho}(t)=\frac{\dot{\phi}(\si)}{\phi^2(\si)},$ and $\Sigma=\sigma(T).$

Now if $D_1\ll D_2,$ i.e. when the inhibitor diffuses much faster than the activator, then system \eqref{tregm1}-\eqref{tregm4} can be fairly approximated by an ODE-PDE system with a non-local reaction term. We will denote the new approximation by {\it shadow system} as coined in \cite{ke78}. Below we provide a rather rough derivation of the shadow system, while for a more rigorous approach one can appeal to the arguments in \cite{bk18}. Indeed, dividing \eqref{tregm2} by $D_2$ and taking $D_2\to +\infty,$ see also \cite{n11},  then it follows that $\tilde{v}$ solves
$
\Delta_{\xi}  \tilde{v}=0, \quad \; \xi\in \Omega_0,\quad
\frac{\pl \tilde{v}}{\pl \nu}=0, \quad \; \xi\in \pl\Omega_0,
$
for any fixed $\si\in(0,\Sigma).$ Due to the imposed Neumann boundary condition then $\tilde{v}$ is a spatial homogeneous (independent of $\xi$) solution, i.e. $\tilde{v}(\xi,\si)=\eta(\si)$ and thus \eqref{tregm2} can be written as
\bge\label{rq3}
\tau \frac{d\eta}{d \si}=-\Phi(\si)\eta+\phi^2(\si)\displaystyle\frac{\tilde{u}^r}{\eta^s}, \quad \si\in (0,\Sigma),
\ege
where
\bge\label{ps1}
\Phi(\si)=:\left(\phi^2(\si)+N\frac{\dot{\phi}(\si)}{\phi(\si)}\right).
\ege
 Averaging \eqref{rq3} over $\Om_0$ we finally infer that the pair $(\tilde{u}, \eta)$ satisfies the {\it shadow system}\rm
\bge
&&\tilde{u}_{\sigma}= D_1 \Delta_{\xi}  \tilde{u}-\Phi(\si)\tilde{u}+\phi^2(\si)\displaystyle\frac{\tilde{u}^p}{\eta^q}, \quad \xi\in \Omega_0,\; \si\in (0,\Sigma), \label{stregm1}\\
&&\tau \frac{d\eta}{d\si}=-\Phi(\si)\eta+\phi^2(\si)\displaystyle\frac{\avint\tilde{u}^r\,d\xi}{\eta^s}, \quad  \si\in (0,\Sigma),
 \label{stregm2}\\
 && \frac{\pl \tilde{u}}{\pl \nu}=0, \quad \; \xi\in \pl\Omega_0,\; \si\in (0,\Sigma),\label{stregm3}\\
 && \tilde{u}(\xi,0)=\hat{u}_0(\xi)>0,\quad \eta(0)=\eta_0>0, \quad \xi\in \Om_0, \label{stregm4}
\ege
where
$
\avint\tilde{u}^r\,d\xi=\frac{1}{|\Om_0|}\int_{\Om_0} \tilde{u}^r\,d\xi.
$
In the limit case $\tau\to 0,$ i.e. when the inhibitor's response to the growth of the activator is quite small, then the shadow system is reduced to a single, though, non-local equation. Indeed, when $\tau=0$, \eqref{stregm2} entails that
$
\eta(\si)=\left(\frac{\phi^2(\si)}{\Phi(\si)}\avint\tilde{u}^r\,d\xi\right)^{\frac{1}{s+1}},
$
and thus \eqref{stregm1}-\eqref{stregm4} reduce to
\bge
&&\tilde{u}_{\sigma}= D_1 \Delta_{\xi}  \tilde{u}-\Phi(\si)\tilde{u}+\displaystyle\frac{\Psi(\si)\tilde{u}^p}{\left(\avint\tilde{u}^r\,d\xi\right)^{\ga}}, \quad \xi\in \Omega_0,\; \si\in (0,\Sigma), \label{nstregm1}\\
&& \frac{\pl \tilde{u}}{\pl \nu}=0, \quad \; \xi\in \pl\Omega_0,\; \si\in (0,\Sigma),\label{nstregm2}\\
&&\tilde{u}(\xi,0)=\hat{u}_0(\xi)>0,\quad \xi\in \Om_0, \label{nstregm3}
\ege
recalling $\ga=\frac{q}{s+1}$ and
\bge\label{ps2}
\Psi(\si)=\phi^{2(1-\ga)}(\si)\Phi^{\ga}(\si).
\ege
Recovering the $t$ variable entails that the following partial differential equation holds
\bge
&&\hat{u}_{t}= \frac{D_1}{\rho^2(t)} \Delta_{\xi}  \hat{u}-L(t)\hat{u}+L^{-\ga}(t)\displaystyle\frac{\hat{u}^p}{\left(\avint\hat{u}^r\,d\xi\right)^{\ga}}, \quad \xi\in \Omega_0,\; t\in (0,T), \label{nstregm1t}\\
&& \frac{\pl \hat{u}}{\pl \nu}=0, \quad \; \xi\in \pl\Omega_0,\; t\in (0,T),\label{nstregm2t}\\
&&\hat{u}(\xi,0)=\hat{u}_0(\xi)>0,\quad \xi\in \Om_0, \label{nstregm3t}
\ege
where $L(t):=\left(1+N\frac{\dot{\rho}(t)}{\rho(t)}\right).$ We note that formulation \eqref{nstregm1}-\eqref{nstregm3} is more appropriate for the demonstrated mathematical analysis, however all of our theoretical results can be directly interpreted in terms of the equivalent formulation
\eqref{nstregm1t}-\eqref{nstregm3t}. Besides,  formulation
\eqref{nstregm1t}-\eqref{nstregm3t} is more appropriate for our numerical experiments since the calculation of the functions $\Phi(\sigma)$ and $\Psi(\sigma)$ is not always possible.

The main aim of the current work is to investigate the long-time dynamics of the non-local problem \eqref{nstregm1}-\eqref{nstregm3} and then check whether it resembles that of the reaction-diffusion system \eqref{tregm1}-\eqref{tregm4}. Biologically speaking we will investigate whether, under the fact that the inhibitor's response to the growth of the activator is quite small and when it also diffuses much faster than the activator,  is it necessary to study the dynamics of both reactants or  it is sufficient to study only the activator's dynamics. From here onwards, we take $D_1=1$, revert to the initial variables $x, u$  instead of $\xi, \widetilde{u}$  and we drop the index $\xi$ from the Laplacian $\Delta$ without any loss of generality. Hence, we will focus our study on the following single partial differential equation
\bge
&&u_{\si}= \Delta u-\Phi(\si)u+\displaystyle\frac{\Psi(\si)u^p}{\left(\avint u^r\,dx\right)^{\ga}}, \quad x\in \Omega_0,\; \si\in (0,\Sigma), \label{nstregm1k}\\
&& \frac{\pl u}{\pl \nu}=0, \quad \; x\in \pl\Omega_0,\; \si\in (0,\Sigma),\label{nstregm2k}\\
&&u(x,0)=u_0(x)>0,\quad x\in \Om_0. \label{nstregm3k}
\ege
The layout of the current work is as follows. Section \ref{ode-bge} deals with the derivation and proofs of various blow-up results, induced by the non-local reaction term (ODE blow-up results), together with some global-time existence results  for problem \eqref{nstregm1k}-\eqref{nstregm3}. Following the approach developed in \cite{KS16,KS18}, in Section \ref{tipf} we present and prove a Turing instability result associated with \eqref{nstregm1k}-\eqref{nstregm3}. This Turing instability occurs under the {\it Turing condition} \eqref{tc}  and is exhibited in the form of a {\it driven-diffusion} finite-time blow-up. Finally, in Section \ref{num-sec} we appeal to various numerical experiments in order to confirm some of the theoretical results presented in Sections \ref{ode-bge} and \ref{tipf}. We also compare numerically the long-time dynamics of the non-local problem \eqref{nstregm1k}-\eqref{nstregm3} with that of the reaction-diffusion system \eqref{stregm1}-\eqref{stregm4}.

\section{ODE Blow-up and Global Existence}\label{ode-bge}
The current section is devoted to the presentation of some blow-up results for problem \eqref{nstregm1k}-\eqref{nstregm3k}, i.e. blow-up results induced by the kinetic (non-local) term in \eqref{nstregm1k}. Besides, some global-in-time existence results for problem \eqref{nstregm1k}-\eqref{nstregm3k} are also presented. Throughout the manuscript we use the notation $C$ and $c$ to denote positive constants with big and small values respectively. Our first observation is that the concentration of the activator cannot extinct in finite time. Indeed, the following proposition holds.
\begin{proposition}\label{lbd}
Assume that
\bge\label{mts2an}
\inf_{(0,\Sigma)} \Psi(\si):=m_{\Psi}>0,\; \inf_{(0,\Sigma)} \Phi(\si):=m_{\Phi}>0\;\mbox{and}\; \sup_{(0,\Sigma)} \Phi(\si):=M_{\Phi}<+\infty\quad,
\ege
 then for each $\Sigma>0$ there exists $C_{\Sigma}>0$ such that for the solution $u(x,\si)$ of \eqref{nstregm1k}-\eqref{nstregm3k} the following inequality holds
\begin{equation} \label{jg0}
u(x,\si)\geq C_{\Sigma}\quad \mbox{in}\quad \Om_0\times [0,\Sigma).
\end{equation}
\end{proposition}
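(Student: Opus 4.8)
The plan is to bound $u$ from below by the solution of the linear parabolic problem obtained by discarding the (nonnegative) non-local reaction term, and then to estimate that linear solution explicitly. First I would observe that, since $u>0$ on its interval of existence and $\Psi(\si)>0$ by \eqref{mts2an}, the non-local source $\Psi(\si)u^p/(\avint u^r\,dx)^{\ga}$ is nonnegative. Hence $u$ satisfies
\[
u_{\si}-\Delta u+\Phi(\si)u\;\ge\;0\quad\text{in }\Om_0\times(0,\Sigma),\qquad \frac{\pl u}{\pl\nu}=0 \text{ on }\pl\Om_0,\qquad u(\cdot,0)=u_0,
\]
i.e. $u$ is a supersolution of the \emph{linear} homogeneous problem $w_{\si}=\Delta w-\Phi(\si)w$ with the same Neumann data and the same initial datum. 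Let $w$ denote the solution of that linear problem, which is globally defined since $\Phi$ is bounded on $(0,\Sigma)$.

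Next, by the standard comparison principle for linear parabolic equations under Neumann boundary conditions, $u(x,\si)\ge w(x,\si)$ on $\Om_0\times[0,\Sigma)$ (more precisely, on the maximal interval of existence of $u$, should that be shorter). It therefore suffices to bound $w$ from below. Writing $w(x,\si)=\exp\!\big(-\int_0^{\si}\Phi(\theta)\,d\theta\big)\,z(x,\si)$ reduces the linear problem to the pure heat equation $z_{\si}=\Delta z$ with Neumann boundary conditions and $z(\cdot,0)=u_0$; the minimum principle then gives $z(x,\si)\ge \inf_{\Om_0}u_0=:c_0$. Since $u_0>0$ is, by the standing hypothesis on the initial data in \eqref{nstregm3k}, bounded away from zero on $\overline{\Om_0}$, we have $c_0>0$. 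Using $\sup_{(0,\Sigma)}\Phi\le M_{\Phi}<+\infty$ from \eqref{mts2an} we obtain
\[
u(x,\si)\ge w(x,\si)\ge c_0\,\exp\!\Big(-\int_0^{\si}\Phi(\theta)\,d\theta\Big)\ge c_0\,e^{-M_{\Phi}\Sigma}\;=:\;C_{\Sigma}>0,
\]
which is \eqref{jg0}.

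The argument is essentially elementary and I do not anticipate a genuine obstacle; the only points requiring a little care are (i) invoking the parabolic comparison principle in the correct (Neumann) setting, so that the spatial minimum of $u_0$ is not lost, and (ii) ensuring $\inf_{\Om_0}u_0>0$, which is where the positivity and regularity assumed of $u_0$ enter. An alternative, equivalent route avoids constructing $w$ altogether: setting $\underline u(\si):=\min_{x\in\overline{\Om_0}}u(x,\si)$, at a point realising this minimum one has $\Delta u\ge 0$, so $\underline u$ satisfies the differential inequality $\tfrac{d}{d\si}\underline u(\si)\ge -\Phi(\si)\underline u(\si)\ge -M_{\Phi}\,\underline u(\si)$ (the non-local term again being nonnegative), and Gronwall's inequality yields the same bound $\underline u(\si)\ge \underline u(0)\,e^{-M_{\Phi}\Sigma}$. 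Note that only the upper bound $M_{\Phi}$ on $\Phi$ is actually used for this particular estimate; the remaining hypotheses in \eqref{mts2an} are stated for later use in the section.
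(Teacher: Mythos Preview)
Your proposal is correct and follows essentially the same approach as the paper: drop the nonnegative non-local source term and compare with the resulting linear problem, yielding the explicit lower bound $\inf_{\Om_0}u_0\cdot e^{-M_{\Phi}\Sigma}$. The paper in fact takes your ``alternative route'' directly---it compares $u$ with the spatially constant ODE solution $\tilde u(\si)=\tilde u_0 e^{-M_{\Phi}\si}$ without passing through the intermediate linear heat equation---so your main presentation is slightly more elaborate than necessary, but the substance and the resulting constant are identical.
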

\begin{proof}
Owing to the maximum principle and by using \eqref{mts2an} we derive that $u=u(x,\si)>0.$ By virtue of the comparison principle,  we also deduce that $u(x,\si)\geq \tilde u(\si)$, where $\tilde{u}=\tilde{u}(\si)$ is the solution to
$ \frac{d\tilde{u}}{d\si}=-M_{\Phi}\tilde{u}\quad \mbox{in $(0, \Sigma)$},\quad \tilde{u}(0)=\tilde{u}_0\equiv \inf_{\Om_0} u_0(x)>0, $
and thus \eqref{jg0} is satisfied with $C_{}=\tilde{u}_0e^{-M_{\Phi} \Sigma}$.
\end{proof}
\begin{remark}\label{rem1}
It is easily checked that condition \eqref{mts2an} is satisfied for any decreasing function $\phi(\sigma)$ satisfying
\bge\label{diq1}
\phi(\si)> \frac{1}{\sqrt{2N\sigma+1}},\; 0<\si<\Sigma,
\ege
since then  by virtue of \eqref{ps1}
\bge\label{ts1}
0<\Phi(\si)=\left(\phi^2(\si)+N\frac{\dot{\phi}(\si)}{\phi(\si)}\right)< \phi^2(\si)< \phi^2(0)=1,\quad 0<\sigma<\Sigma.
\ege
Then \eqref{ts1} via \eqref{ps2} implies that
\bge\label{ts2a}
0<\Psi(\si)=\left(\phi(\si)\right)^{2(1-\gamma)} \Phi^{\gamma}(\si)<1 ,\quad\mbox{for}\quad 0<\gamma<1,\quad 0<\sigma<\Sigma
\ege
and
\bge\label{ts2c}
0<\Psi(\si)=\left(\phi(\si)\right)^{2(1-\gamma)} \Phi^{\gamma}(\si)<m^{2(1-\gamma)}_{\Phi} ,\quad\mbox{for}\quad \gamma>1,\quad 0<\sigma<\Sigma,
\ege
when $m_{\Phi}=\inf_{(0,\Sigma)}\Phi(\si)>0.$
\end{remark}
 A key estimate for obtaining some blow-up results presented throughout is the following proposition.
\begin{proposition}\label{eiq}
Let $\Psi(\si)$ and $\Phi(\si)$ satisfy \eqref{mts2an}, then there exists $\delta_0>0$ such for any $0<\delta\leq \delta_0$ the following estimate  is fulfilled
\begin{equation}
\avint u^{-\delta}\leq C\quad\mbox{for any}\quad 0<\si<\Sigma,
 \label{eqn:1.16h}
\end{equation}
where the constant $C$ is independent of time $\si.$
 \label{thm:1.1}
\end{proposition}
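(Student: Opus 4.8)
The plan is to obtain a differential inequality for the quantity $y(\si):=\avint u^{-\delta}\,dx$ and then close it by Gronwall's lemma. Since \eqref{mts2an} is assumed, the maximum principle (cf. Proposition \ref{lbd}) applied to \eqref{nstregm1k}--\eqref{nstregm3k} shows that $u>0$ on $\Om_0\times[0,\Si)$, so $u^{-\delta}$ is well defined and smooth, and $y(0)=\avint u_0^{-\delta}\le\big(\inf_{\Om_0}u_0\big)^{-\delta}<+\infty$ because $u_0$ is continuous and strictly positive on the compact set $\overline{\Om_0}$. Differentiating under the integral sign and substituting \eqref{nstregm1k} gives
\be\label{dieq}
y'(\si)=-\delta\avint u^{-\delta-1}\Delta u\,dx+\delta\,\Phi(\si)\avint u^{-\delta}\,dx-\delta\,\frac{\Psi(\si)}{\big(\avint u^{r}\,dx\big)^{\ga}}\avint u^{p-1-\delta}\,dx .
\ee

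Next I would analyse the three terms in \eqref{dieq}. Integrating the first one by parts and using the homogeneous Neumann condition \eqref{nstregm2k} kills the boundary term and leaves $-\delta(\delta+1)\avint u^{-\delta-2}|\nabla u|^{2}\,dx\le0$, so it is dropped. The second term is $\delta\,\Phi(\si)\,y(\si)\le\delta M_{\Phi}\,y(\si)$ by the upper bound on $\Phi$ in \eqref{mts2an}. For the third term I pick $\delta_0>0$ small, in particular $\delta_0<p-1$, so that for $0<\delta\le\delta_0$ the exponent $p-1-\delta$ is positive; then $u^{p-1-\delta}>0$, and since $\Psi(\si)>0$ and $\avint u^{r}\,dx>0$ the whole term is $\le0$ and may also be discarded. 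Hence $y'(\si)\le\delta M_{\Phi}\,y(\si)$, and Gronwall's inequality yields $y(\si)\le y(0)\,e^{\delta M_{\Phi}\si}\le\big(\inf_{\Om_0}u_0\big)^{-\delta}e^{\delta M_{\Phi}\Si}=:C$, which is the claimed estimate \eqref{eqn:1.16h} with a constant independent of $\si$.

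The step that deserves the most care, and the point at which the smallness of $\delta$ and the Turing condition \eqref{tc} genuinely enter, is the treatment of the nonlocal reaction term when one also wants the constant to be independent of $\Si$ (so that the bound survives if $\int_0^{\infty}\rho^{-2}(\theta)\,d\theta=+\infty$). In that case one must not throw away $-\delta\Psi(\si)\big(\avint u^{r}\big)^{-\ga}\avint u^{p-1-\delta}$; instead one keeps it and bounds it from below by a power of $y$, combining Jensen's inequality (which gives $\avint u^{p-1-\delta}\ge\big(\avint u^{-\delta}\big)^{-(p-1-\delta)/\delta}$, together with a matching control of $\avint u^{r}$) with the Turing condition $p-r\ga<1$; the net effect is that the reaction term outweighs the linear contribution $\delta M_{\Phi}y$ once $y$ is large, forcing $y'(\si)\le\delta M_{\Phi}\,y-c\,y^{1+\kappa}$ for some $\kappa>0$ and hence a uniform-in-time bound. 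The real obstacle here is that there is no a priori upper bound on $\|u(\cdot,\si)\|_{\infty}$ (and $\Psi$ is not assumed bounded above), so the nonlocal term is the only mechanism available to counteract the linear growth; one may in addition have to retain part of the discarded gradient term and invoke a Poincar\'e--Sobolev inequality to handle configurations in which $u$ is small only on a portion of $\Om_0$.
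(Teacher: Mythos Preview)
Your first paragraph gives a valid argument for the statement as literally phrased: setting $y=\avint u^{-\delta}$, dropping the (nonpositive) diffusion and nonlocal contributions, and applying Gronwall yields $y(\si)\le y(0)e^{\delta M_{\Phi}\Si}$, a bound uniform in $\si\in(0,\Si)$. This is the same skeleton as the paper's proof (both amount to averaging the equation satisfied by $\chi=u^{-\delta}$), but note that your constant depends on the horizon $\Si$ and is in fact no stronger than what already follows from Proposition~\ref{lbd}.

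The paper does more: it retains the gradient term in the form $4(1+\delta^{-1})\avint|\nabla\chi^{1/2}|^{2}$ and invokes the Poincar\'e--Wirtinger inequality with $w=\chi^{1/2}$ to convert it into a coercive term of order $(4\mu_{2}/\delta)\avint\chi$, which for $0<\delta\ll1$ dominates the linear growth $\delta M_{\Phi}\avint\chi$ and yields the \emph{decay} inequality $\tfrac{d}{d\si}\avint\chi+c\avint\chi\le0$, hence $\avint u^{-\delta}\le\avint u_0^{-\delta}$ independently of $\Si$. This $\Si$-independence is what is actually used downstream (see Remark~\ref{nny} and the uniform bound on $K(\si)$ in \eqref{eqn:6.8} up to the a-priori-unknown blow-up time). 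So the genuine role of ``$\delta$ small'' in the proposition is to make the Poincar\'e coefficient beat $\delta M_{\Phi}$, not to keep $p-1-\delta>0$ as you suggest (indeed $u^{p-1-\delta}>0$ regardless of the sign of the exponent, so that restriction is unnecessary). Your second paragraph correctly anticipates that retaining the gradient term together with a Poincar\'e-type inequality is the mechanism for horizon-independence; that is precisely the paper's device, whereas the Jensen/Turing-condition route you sketch is not used here, and the Turing condition is not among the hypotheses of the proposition.
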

\begin{proof}
Define $\chi=u^{\frac{1}{\al}}$ for $\al\neq 0$, then we can easily check that $\chi$ satisfies
\bge
&&\al \chi_{\si}=\alpha\left(\Delta \chi+4 (\al-1) \vert\nabla \chi^{\frac{1}{2}}\vert^2\right)-\Phi\chi+\frac{\Psi u^{p-1+\frac{1}{\al}}}{\left(\avint u^r\right)^{\ga}} \quad \mbox{in}\quad \Om_0\times (0,\Sigma),\label{ob1}\\
&& \frac{\pl \chi}{\pl \nu}=0, \quad \mbox{on}\quad \partial\Om_0\times (0,\Sigma),\label{ob2a}\\
&&\chi(x,0)=u_0^{\frac{1}{\al}}(x),\quad\mbox{in}\quad \Omega_0.\label{ob2}
\ege
Averaging (\ref{ob1}) over $\Om_0$, we obtain
\begin{equation}\label{ob4}
\alpha\frac{d}{d\si}\avint \chi+4\alpha(1-\alpha)\avint\vert \nabla \chi^{\frac{1}{2}}\vert^2+\Phi\avint \chi= \frac{\avint\Psi u^{p-1+\frac{1}{\al}}}{\left(\avint u^r\right)^{\ga}},
\end{equation}
and hence
\bge\label{ts2}
\frac{d}{d\si}\avint \chi+4(1-\alpha)\avint\vert\nabla \chi^{\frac{1}{2}}\vert^2+\frac{\Phi}{\alpha}\avint \chi\leq 0,
\ege
for $\alpha<0.$ Setting  $\delta=-\frac{1}{\alpha}$ we have
\[ \frac{d}{d\si}\avint \chi+4(1+\delta^{-1})\avint\vert \nabla \chi^{\frac{1}{2}}\vert^2 \leq M_{\Phi}\delta \avint \chi. \]
Now, recall that Poincar\'e-Wirtinger's inequality, \cite{br11}, reads
\bge\label{pwi}
\Vert \nabla w\Vert_2^2\geq \mu_2\Vert w\Vert_2^2,\quad\mbox{for any}\quad w\in H^1(\Om),
\ege
where $\mu_2$ is the second eigenvalue of the Laplace operator associated with Neumann boundary conditions. Then \eqref{ts2} by virtue of \eqref{pwi} for $w=\chi^{\frac{1}{2}}$ entails that
$\frac{d}{d\si}\avint \chi+c\avint \chi\leq 0$,
for some positive constant $c,$ provided $0<\delta\ll 1$. Consequently,  Gr\"{o}wnwall's lemma  yields that $\chi(\si)\leq C<\infty$ for any $0<\si<\Sigma$ and thus (\ref{eqn:1.16h}) follows due to the fact that $\chi=u^{-\delta}.$
\end{proof}
\begin{remark}\label{ts3}
Note that Proposition \ref{thm:1.1}  guarantees that the  non-local term of problem \eqref{nstregm1k}-\eqref{nstregm3k} stays away from zero and hence its solution $u$ is bounded away from zero as well.
In fact, inequality (\ref{eqn:1.16h}) implies $\displaystyle{ \avint u^{\delta}\geq c=C^{-1}}$ and then
\begin{equation}
\avint u^r\geq \left(\avint u^\delta \right)^{r/\delta}\geq c^{r/\delta}>0\quad\mbox{for any}\quad 0<\si<\Sigma,
 \label{eqn:1.17}
\end{equation}
follows by Jensen's inequality, \cite{ev10}, taking $\delta\leq r,$ where again $c$ is independent of time $t.$  The latter estimate rules out the possibility of (finite or infinite time) quenching, i.e.
$
\lim_{\si\to \Sigma} ||u(\cdot,\si)||_{\infty}=0,
$
cannot happen, and thus extinction of the activator in the long run is not possible.
\end{remark}
\begin{remark}\label{nny}
In case $\Phi(\si)$ is not bounded from above, as it happens for $\rho(t)=e^{\beta t},\beta>0,$ when
$\Phi(\si)=(1+N\beta)(1-2\beta \si)^{-1}, 0<\si<\frac{1}{2\beta},$ then both of the estimates \eqref{eqn:1.16h} and \eqref{eqn:1.17} still hold true, however the involved constants depend on time $\si$ and thus  (finite or infinite time) quenching cannot be ruled out.
\end{remark}

Next we present our first ODE-type blow-up result for problem \eqref{nstregm1k}-\eqref{nstregm3k} when an {\it anti-Turing condition}, the reverse of \eqref{tc}, is satisfied.
\begin{theorem}\label{thm1}
Take $p \geq r, 0<\ga<1$ and  $\omega=p-r\gamma>1.$ Assume also $\Psi(\si)>0$ and consider initial data $u_0(x)$ such that
\bge\label{bid}
\bar{u}_0:=\avint u_0\,dx>(\omega-1)^{\frac{1}{1-\omega}}\,I^{\frac{1}{1-\omega}}(\Sigma)>0,
\ege
provided that
\bge\label{nk2}
I(\Sigma):=\int_0^{\Sigma} \Psi(\theta) e^{(1-\omega)\int^{\theta} \Phi(\eta)\,d\eta}\,d \theta<\infty,
\ege
then the solution of \eqref{nstregm1k}-\eqref{nstregm3k} blows up in  finite time $\Sigma_b<\Sigma$, \\i.e.
$\lim_{\si\to \Sigma_b}\Vert u(\cdot,\sigma)\Vert_\infty=+\infty.$
 \label{lem:1.2}
\end{theorem}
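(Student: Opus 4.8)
The plan is to pass from the nonlocal PDE to a scalar ordinary differential inequality for the spatial average $\bar u(\si):=\avint u(x,\si)\,dx$, and to show that $\bar u$ already becomes infinite at some time $\Sigma_b<\Sigma$; since $\|u(\cdot,\si)\|_\infty\ge\bar u(\si)$ this will prove the claimed blow-up. First I would integrate \eqref{nstregm1k} over $\Om_0$: by the Neumann condition \eqref{nstregm2k} the Laplacian term drops out, so that
\[
\frac{d\bar u}{d\si}=-\Phi(\si)\,\bar u+\Psi(\si)\,\frac{\avint u^p\,dx}{\bigl(\avint u^r\,dx\bigr)^{\ga}}.
\]
The decisive step is to bound the nonlocal quotient below by a power of $\bar u$, namely
\[
\frac{\avint u^p\,dx}{\bigl(\avint u^r\,dx\bigr)^{\ga}}\ \ge\ \bar u^{\,p-r\ga}=\bar u^{\,\omega}.
\]
This follows from Jensen's inequality: if $r\le 1$ one uses $\avint u^p\ge\bar u^p$ (convexity of $t\mapsto t^p$, as $p>1$) and $\avint u^r\le\bar u^r$ (concavity of $t\mapsto t^r$); if $r>1$ one uses $\avint u^p\ge(\avint u^r)^{p/r}$ (as $p/r\ge 1$) together with $\avint u^r\ge\bar u^r$, noting $p/r-\ga>0$. (Equivalently, since $s\mapsto(\avint u^s)^{1/s}$ is nondecreasing and $p-r\ga,\ r\ga\ge 0$ sum to $p$, the bound is immediate.) Because $\Psi>0$, and $\bar u>0$ since $u>0$ by the maximum principle, $\bar u$ satisfies the Bernoulli-type differential inequality $\dfrac{d\bar u}{d\si}\ge-\Phi(\si)\,\bar u+\Psi(\si)\,\bar u^{\,\omega}$, with $\bar u(0)=\bar u_0>0$ and superlinear exponent $\omega>1$.

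Next I would linearise via the substitution $z:=\bar u^{\,1-\omega}>0$. Since $1-\omega<0$, dividing the inequality by $\bar u^{\,\omega}$ and multiplying by $1-\omega$ reverses it and gives $z'-(\omega-1)\Phi(\si)z\le-(\omega-1)\Psi(\si)$. Multiplying by the integrating factor $E(\si):=e^{-(\omega-1)\int_0^{\si}\Phi(\eta)\,d\eta}>0$ and integrating over $(0,\si)$ yields
\[
E(\si)\,z(\si)\ \le\ \bar u_0^{\,1-\omega}-(\omega-1)\int_0^{\si}\Psi(\theta)\,e^{(1-\omega)\int_0^{\theta}\Phi}\,d\theta\ =\ \bar u_0^{\,1-\omega}-(\omega-1)\,I(\si),
\]
with $I(\si)$ exactly as in \eqref{nk2}. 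Raising \eqref{bid} to the negative power $1-\omega$ (a decreasing operation) gives $\bar u_0^{\,1-\omega}<(\omega-1)\,I(\Sigma)$. Since $\Psi,E>0$ and $I(\Sigma)<\infty$, the function $I$ is continuous, strictly increasing, and vanishes at $0$, so there is a unique $\Sigma_b\in(0,\Sigma)$ with $(\omega-1)\,I(\Sigma_b)=\bar u_0^{\,1-\omega}$. For $\si<\Sigma_b$ the right-hand side above is positive and tends to $0$ as $\si\to\Sigma_b^-$, while $E(\si)\to E(\Sigma_b)\in(0,\infty)$ (on $[0,\Sigma_b]\subset[0,\Sigma)$ the continuous function $\Phi$ is integrable); hence $z(\si)\to 0^+$, i.e.\ $\bar u(\si)=z(\si)^{1/(1-\omega)}\to+\infty$ as $\si\to\Sigma_b^-$. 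This forces $\|u(\cdot,\si)\|_\infty\to+\infty$ at the finite time $\Sigma_b<\Sigma$.

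The only genuinely delicate point is the Jensen lower bound on the nonlocal quotient; this is precisely where the hypotheses $p\ge r$ and $0<\ga<1$ enter, guaranteeing $1<\omega<p$ and $p/r\ge 1$. Everything afterwards is an elementary comparison argument for a Bernoulli ordinary differential inequality, together with a bookkeeping check that condition \eqref{bid} is exactly what makes the integrating-factor estimate drive $z$ to zero before time $\Sigma$. It should be noted that local-in-time existence of a positive classical solution, implicitly used when differentiating $\bar u$, is taken for granted.
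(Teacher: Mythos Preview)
Your proof is correct and follows essentially the same route as the paper: integrate the equation, use Jensen/H\"older to bound the nonlocal quotient below by $\bar u^{\,\omega}$, and then exploit the resulting Bernoulli differential inequality. The only cosmetic differences are that the paper uses a unified H\"older argument (via $(\avint u^r)^\gamma\le(\avint u^p)^{r\gamma/p}$) rather than your case split $r\le1$ versus $r>1$, and that the paper introduces a comparison function $F$ solving the Bernoulli \emph{equation} and then applies the intermediate value theorem to $G(\si)=\bar u_0^{1-\omega}-(\omega-1)I(\si)$, whereas you substitute $z=\bar u^{1-\omega}$ directly into the \emph{inequality}; these are equivalent. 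One small point: your $\Sigma_b$ is really an upper bound for the blow-up time, since the solution could cease to exist earlier---the paper phrases this as ``$\Sigma_b\le\si^*$''---but the conclusion is unaffected.
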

\begin{proof}
Since $p>1$ and $p\geq r$, then by virtue of the H\"{o}lder's inequality
$ \avint u^p\geq \left( \avint u\right)^{p}$ and $ \left( \avint u^r\right)^\gamma\leq \left( \avint u^p \right)^{\frac{\gamma r}{p}}.$
Then $\bar{u}(\si)=\avint u(x,\si)\,dx$ satisfies
\begin{equation}
\displaystyle\frac{d \bar{u}}{d\si}= -\Phi(\si)\bar{u}+\Psi(\si)\displaystyle{\frac{\avint u^p}{\left(\avint u^r\right)^{\gamma}}}\geq -\Phi(\si)\bar{u}+\Psi(\si)\bar{u}^{p-r\ga}\quad\mbox{for}\quad 0<\si<\Sigma.
 \label{eqn:1.14}
\end{equation}
Set now $F(\si)$ to be the solution of the following Bernoulli's type initial value problem
$
\frac{d F}{d\si}=-\Phi(\si)F(\si)+\Psi(\si)F^{\omega}(\si),\; 0<\si<\Sigma,\quad F(0)=\bar{u}_0>0,
$
then via the comparison principle $F(\si)\leq \bar{u}(\si)$ for $0<\si<\Sigma$ and $F(\si)$ is given by
$
F(\si)=e^{(\omega-1)\int^{\si} \Phi(\eta)\,d\eta}(G(\si))^{\frac{1}{1-\omega}},
$
where
$
G(\si):=\left[\bar{u}_0^{1-\omega}-(\omega-1)\int_0^{\si} \Psi(\theta)e^{(1-\omega)\int^{\theta} \Phi(\eta)\,d\eta}\,d \theta\right].
$
Note that $F(\si)$ blows up in finite-time if there exists $\si^*<\Sigma$ such that $G(\si^*)=0.$
First note that $G(0)>0;$ furthermore, under the assumption \eqref{bid} we have $\lim_{\si\to \Sigma}G(\si)<0$ and thus by virtue of the intermediate value theorem there exists $\si^*<\Sigma$ such that $G(\si^*)=0.$ The latter implies that $\lim_{\si\to \si^*} F(\si)=+\infty$ and
therefore $\lim_{s\to \Sigma_b} \bar{u}(\si)=+\infty$ for some $\Sigma_b\leq\si^*,$ which completes the proof.
\end{proof}

\begin{remark}\label{aal5}
Note that for an exponentially growing domain, i.e. when $\rho(t)=e^{\beta t}, \beta>0,$ condition \eqref{nk2} is satisfied since then
$1<\Phi(\si)=\left(1+N\beta\right)(1-2\beta \sigma)^{-1}$ and
$1<\Psi(\si)=\left(1+N\beta\right)^{\ga}(1-2\beta \sigma)^{-1}$
for all $\si \in\left(0,\frac{1}{2\beta}\right)$.
Thus
\bgee
 I(\Sigma)=\left(1+N\beta\right)^{\ga}\int_0^{\frac{1}{2 \beta}} \left(1-2\beta \theta\right)^{\frac{(\omega-1)(1+N\beta)}{2\beta}-1}\,d \theta=\frac{\left(1+N\beta\right)^{\ga-1}}{(\omega-1)}<+\infty,
\egee
and according to Theorem \ref{thm1} finite-time blow-up takes place at time
\bge\label{am1}
\Sigma_b\leq\si_{g}=\frac{1}{2\beta}\left\{1-\left[1-(1+N\beta)^{1-\ga}\bar{u}_0^{1-\omega}\right]^{\frac{2 \beta}{(\omega-1)(1+N\beta)}}\right\},
\ege
and for initial data $u_0$ satisfying
$
\bar{u}_0>\left(1+N\beta\right)^{\frac{1-\ga}{\omega-1}}.
$
Besides, for an exponential shrinking domain, i.e. when $\rho(t)=e^{-\beta t}, 0<\beta<\frac{1}{N},$ then again condition \eqref{nk2} is valid since then
\bge\label{aal3}
0<\Phi(\si)=\left(1-N\beta\right)(1+2\beta \sigma)^{-1}<1,\quad \si\in(0,\infty),
\ege and
\bge\label{aal4}
0<\Psi(\si)=\left(1-N\beta\right)^{\ga}(1+2\beta \sigma)^{-1}<1,\quad \si\in(0,\infty).
\ege
In that case
\bgee
 I(\Sigma)=\left(1-N\beta\right)^{\ga}\int_0^{+\infty} \left(1+2\beta \theta\right)^{\frac{(\omega-1)(1-N\beta)}{2\beta}-1}\,d \theta=\frac{\left(1-N\beta\right)^{\ga-1}}{(\omega-1)}<+\infty,
\egee
and again finite-time blow-up occurs at
\bge\label{am2}
\Sigma_b\leq \si_{s}=\frac{1}{2\beta}\left\{\left[1-(1-N\beta)^{1-\ga}\bar{u}_0^{1-\omega}\right]^{\frac{2 \beta}{(1-\omega)(1-N\beta)}}-1\right\},
\ege
provided that the initial data satisfy
$
\bar{u}_0>\left(1-N\beta\right)^{\frac{1-\ga}{\omega-1}}.
$
For a stationary domain, i.e. when $\rho(t)=\phi(\si)=1,$ we have $\Phi(\si)=\Psi(\si)=1$ and thus finite-time blow-up occurs at $\Sigma_1\leq\si_1,$ provided that $\bar{u}_0>1,$ see also \cite{KS16, KS18}, where
$
\si_1:=\frac{1}{1-\omega}\ln\left(1-\bar{u}_0^{1-\omega}\right).
$
\end{remark}
\begin{remark}
When the domain evolves logistically, a feasible choice in the context of biology \cite{pspbm04},  i.e. when
$
\rho(t)=\frac{e^{\beta t}}{1+\frac{1}{m}\left(e^{\beta t}-1\right)},\quad\mbox{for}\quad m\neq1,
$
means that \eqref{aal2} cannot be solved for $t$ and it is more convenient to deal with problem \eqref{nstregm1t}-\eqref{nstregm3t} instead. Then following the same approach as in Theorem \ref{thm1} we show that the solution of \eqref{nstregm1t}-\eqref{nstregm3t} exhibits finite-time blow-up under the same conditions for parameters $p,\gamma,r$ provided that the initial condition satisfies
\bge\label{sk1}
\bar{u}_0:=\avint u_0\,dx>(\omega-1)^{\frac{1}{1-\omega}}\,\int_0^{\infty} L^{-\gamma}(\theta) e^{(1-\omega)\int^{\theta}L(\eta)\,d\eta}\,d \theta,
\ege
where now the quantity $L(t)=1+\frac{N\beta \left(1-\frac{1}{m}\right)}{1+\frac{1}{m}\left(e^{\beta t}-1\right)}.$
\end{remark}

\begin{remark}\label{rem1a}
Assume now that
\bge\label{nk11}
0<\bar{u}_0<(\omega-1)^{\frac{1}{1-\omega}}\,I^{\frac{1}{1-\omega}}(\Sigma),
\ege
then $G(\Sigma)>0$ and since $G(\si)$ is strictly decreasing we get that $G(\si)>0$ for any $0<\si<\Sigma$ which implies that $F(\si)$ never blows up. Therefore,  since $F(\si)\leq \bar{u}(\si),$ there is still a possibility that $\bar{u}(\si)$ does not blow up either, however we cannot be sure and it remains to be verified numerically, see Section \ref{num-sec}.
\end{remark}

Next, we investigate the dynamics of some $L^\ell$-norms $||u(\cdot,\si)||_{\ell},$  which identify some invariant regions in the phase space. We first define
$\zeta(\si)=\avint u^r\,dx$,  $y(\si)=\avint u^{-p+1+r}\,dx$ and $w(\si)=\avint u^{p-1+r}\,dx$,
then H\"{o}lder's inequality implies
\bge
w(\si)y(\si)\geq \zeta^2(\si), \quad 0\leq \si<\Sigma. \label{psl1}
\ege
Our first result in this direction provides some conditions under which a finite-time blow-up takes place, when an {\it anti-Turing condition} is in place,  and is stated as follows.
\begin{theorem}\label{obu1}
Take $0<\gamma<1$ and $r\leq 1<\frac{p-1}{r}.$ Assume that $\Phi(\si), \Psi(\si)$ satisfy \eqref{mts2an}
then if one of the following conditions holds:
\begin{enumerate}
\item $w(0)<\frac{m_{\Psi}}{M_{\Phi}}\zeta(0)^{1-\gamma},$
\item $\frac{p-1}{r}\geq 2$ and $w(0)<1,$
\end{enumerate}
then finite-time blow-up occurs.
\end{theorem}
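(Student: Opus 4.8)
The plan is to work with the three scalar functionals $\zeta(\si)=\avint u^{r}$, $w(\si)=\avint u^{p-1+r}$ and $y(\si)=\avint u^{-(p-1)+r}$ already introduced, tied together by the H\"older relation $\zeta^{2}\le wy$ of \eqref{psl1}, and to reduce the assertion to a closed superlinear differential inequality for one of them.

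First I would derive the evolution inequalities. Multiplying \eqref{nstregm1k} by $r\,u^{r-1}$, by $(p-1+r)\,u^{p-2+r}$ and by $(-(p-1)+r)\,u^{-p+r}$ respectively, integrating over $\Om_{0}$, integrating by parts with the Neumann condition \eqref{nstregm2k}, and using $r\le1$ — so that the Laplacian contributes a nonnegative term to $\tfrac{d}{d\si}\zeta$ and, the exponent of $y$ being negative, a nonpositive term to $\tfrac{d}{d\si}y$ — one obtains, after noting that the exponent defining $y$ is chosen precisely so that its nonlocal source reduces to a multiple of $\zeta^{1-\ga}$,
\[
\frac{d\zeta}{d\si}\ \ge\ -r\,M_{\Phi}\,\zeta+r\,m_{\Psi}\,\frac{w}{\zeta^{\ga}},\qquad
\frac{dy}{d\si}\ \le\ (p-1-r)\Big(M_{\Phi}\,y-m_{\Psi}\,\zeta^{1-\ga}\Big),
\]
together with a companion inequality for $w$; in the latter, when $p-1+r>1$ the Laplacian term has the unfavourable sign and must be absorbed via the Poincar\'e--Wirtinger inequality \eqref{pwi} as in the proof of Proposition~\ref{thm:1.1}, while its nonlocal source is bounded below by $w^{2}/\zeta$ through Cauchy--Schwarz ($\avint u^{p-1+r}\le(\avint u^{2(p-1)+r})^{1/2}(\avint u^{r})^{1/2}$). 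Substituting $w\ge\zeta^{2}/y$ into the first inequality closes a planar differential system in $(\zeta,y)$.

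The heart of the argument is an invariant-region (phase-plane) analysis of this coupled system. I would show that each of the hypotheses (1), (2) places the initial datum in a region on whose boundary the defining inequality is preserved by the flow: in case (2) one rewrites $\tfrac{p-1}{r}\ge2$ as $-(p-1)+r\le -r$, so that the power-mean inequality for negative exponents, followed by Jensen, compares $y$ with $\zeta^{-1}$, while $w(0)<1$ fixes the scale; in case (1) one uses $\zeta^{2}\le wy$ to convert $w(0)<\tfrac{m_{\Psi}}{M_{\Phi}}\zeta(0)^{1-\ga}$ into the admissible relation between $\zeta(0)$ and $y(0)$. Inside that region $\zeta$ is monotone (one checks $\tfrac{d\zeta}{d\si}\ge0$ there), so it may be frozen at $\zeta(0)$ in the remaining inequality, which then becomes a single scalar Bernoulli-type differential inequality with superlinear right-hand side; comparison with the corresponding ODE, exactly as in the proof of Theorem~\ref{thm1}, yields a finite time $\Sigma_{b}<\Sigma$ at which $\avint u^{r}\to+\infty$ (equivalently $\avint u^{-(p-1)+r}\to0$), and since both $\avint u^{r}\le\|u\|_{\infty}^{r}$ and $\avint u^{-(p-1)+r}\ge\|u\|_{\infty}^{-(p-1-r)}$ this forces $\lim_{\si\to\Sigma_{b}}\|u(\cdot,\si)\|_{\infty}=+\infty$.

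I expect the real difficulty to lie in the invariance step: the ``vector field'' of the differential-inequality system must point inward along the boundary of the region singled out by (1)/(2), and because the $\zeta$-, $w$- and $y$-inequalities are coupled this demands the precise interplay of $m_{\Psi},M_{\Phi}$ with the exponents $\ga,p,r$ — which is exactly why the two conditions, the restriction $r\le1$, and $\tfrac{p-1}{r}>1$ (resp.\ $\ge2$) all appear. A secondary but unavoidable technicality is the wrong-sign Laplacian term in the $w$-equation when $p-1+r>1$; as in Section~\ref{ode-bge} it is tamed by Poincar\'e--Wirtinger at the cost of worse constants, and one should check the region argument still closes — or, better, set things up so the final differential inequality involves only $\zeta$ and $y$, whose diffusion terms cooperate.
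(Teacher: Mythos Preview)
Your overall strategy --- close a planar differential-inequality system in $(\zeta,y)$, identify an invariant region $\{y<\tfrac{m_\Psi}{M_\Phi}\zeta^{1-\gamma}\}$ on which $\zeta$ increases and $y$ decreases, then extract the superlinear scalar inequality $\tfrac{1}{r}\tfrac{d\zeta}{d\sigma}\ge c_0\,\zeta^{2-\gamma}$ --- is exactly the paper's. Your two key inequalities for $\zeta$ and $y$ are correct, and after substituting $w\ge\zeta^2/y$ into the $\zeta$-inequality you arrive at precisely the system the paper analyses.

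There is, however, a genuine gap in how you handle condition~(1). You propose to ``use $\zeta^2\le wy$ to convert $w(0)<\tfrac{m_\Psi}{M_\Phi}\zeta(0)^{1-\gamma}$ into the admissible relation between $\zeta(0)$ and $y(0)$,'' but this conversion goes the wrong way: an upper bound on $w$ together with $\zeta^2\le wy$ yields only the \emph{lower} bound $y(0)>\tfrac{M_\Phi}{m_\Psi}\zeta(0)^{1+\gamma}$, whereas membership in the invariant region requires the \emph{upper} bound $y(0)<\tfrac{m_\Psi}{M_\Phi}\zeta(0)^{1-\gamma}$. The resolution in the paper is that the ``$w$'' appearing in the hypotheses of the theorem is in fact your $y=\avint u^{-(p-1)+r}$: the proof takes $\alpha=\tfrac{1}{-p+1+r}$ and calls $\avint\chi=\avint u^{-(p-1)+r}$ by the name $w$, inconsistently with the displayed definitions preceding the theorem. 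Read that way, condition~(1) places $(\zeta(0),y(0))$ directly in the invariant region and no conversion is needed; condition~(2) is then reduced to~(1) via the Jensen bound $y\ge\zeta^{1-(p-1)/r}$ that you sketch. With your reading $w=\avint u^{p-1+r}$, conditions~(1)--(2) become small-data assumptions for which there is no reason to expect blow-up, so the step cannot be repaired as written.

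The Poincar\'e--Wirtinger detour is also a red herring. The evolution of $\avint u^{p-1+r}$ is never needed: both $\zeta$ (since $r\le1$) and $y$ (since the exponent $-(p-1)+r$ is negative) have diffusion contributions of the favourable sign, and the paper simply discards them. Moreover, Poincar\'e--Wirtinger bounds $\avint|\nabla\chi^{1/2}|^2$ from \emph{below}, which is the wrong direction if one wants to neutralise a negative gradient term in a \emph{lower} bound on $\tfrac{d}{d\sigma}\avint u^{p-1+r}$; in Proposition~\ref{thm:1.1} the device works only because there one seeks an upper bound on $\avint\chi$.
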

\begin{proof}
Set $\chi=u^{\frac{1}{\al}}$ with $\al\neq 0$, then following the same steps as in Proposition \ref{eiq} we derive
\bge
&&\quad \al \chi_{\si}=\alpha\left(\Delta \chi+4 (\al-1) \vert\nabla \chi^{\frac{1}{2}}\vert^2\right)-\Phi \chi+\Psi \frac{u^{p-1+\frac{1}{\al}}}{\left(\avint u^r\right)^{\ga}}, \;x\in \Omega_0,\; \si\in (0,\Sigma),\label{nob1} \\
&& \quad \frac{\pl \chi}{\pl \nu}=0 \quad \; x\in \pl\Omega_0,\; \si\in (0,\Sigma),\label{nob2}\\
&& \quad\chi(x,0)=u_0^{\frac{1}{\al}}(x), \quad x\in \Om_0.
 \label{nob3}
\ege
Averaging (\ref{nob1}) over $\Om_0$ and using zero-flux boundary condition \eqref{nob2}, we obtain
\bge\label{mkl1}
\alpha\frac{d}{d\si}\avint \chi=-4\alpha(1-\alpha)\avint\vert \nabla \chi^{\frac{1}{2}}\vert^2-\Phi(\si)\avint \chi+\Psi(\si)\frac{\avint u^{p-1+\frac{1}{\al}}}{\left(\avint u^r\right)^{\ga}}.
\ege
Relation \eqref{mkl1} for $\alpha=\frac{1}{r},$ since also $r\leq 1,$ entails that
\bge\label{psl2}
\frac{1}{r}\frac{d \zeta}{d\si}&&=-\frac{4}{r}\left(1-\frac{1}{r}\right)\avint\vert \nabla \chi^{\frac{1}{2}}\vert^2-\Phi(\si)\zeta(\si)+\Psi(\si)\frac{\avint u^{p-1+r}}{\left(\avint u^r\right)^{\ga}}\nonumber\\
&&\geq -M_{\Phi} \zeta(\si)+m_{\Psi}\frac{\zeta^{2-\ga}(\si)}{w(\si)}\geq \frac{\zeta(\si)}{w(\si)}\left(-M_{\Phi} w(\si)+m_{\Psi}\zeta^{1-\ga}(\si)\right),
\ege
which suffices by using  \eqref{psl1} together with \eqref{mts2an}. Furthermore, since $\frac{p-1}{r}>1$ then \eqref{mkl1} for $\alpha=\frac{1}{-p+1+r}$ leads to
\be\label{mk2}
\alpha\frac{dw}{d\si}=4\alpha(\alpha-1)\avint \vert \nabla u^{\frac{1}{2\alpha}}\vert^2-\Phi(\si)w+\Psi(\si)\zeta^{1-\gamma},
\ee
which, owing to \eqref{mts2an} and using the fact that $\alpha=\frac{1}{-p+1+r}<0$  entails
\begin{equation}
\frac{1}{p-1-r} \frac{dw}{d\si}\leq M_{\Phi} w(\si)-m_{\Psi}\zeta^{1-\gamma}(\si).
 \label{ob7}
\end{equation}
Note that since $0<\gamma<1$, we have  that the curve
$
\Gamma_1: w=\frac{m_{\Psi}\zeta^{1-\gamma}}{M_{\Phi}}, \ \zeta>0,
$
is concave in $w\zeta-$plane, with its endpoint at the origin $(0,0).$ Furthermore relations  (\ref{psl2}) and (\ref{ob7}) imply that the region $\mathcal{R}=\{ (\zeta,w) \mid w<\frac{m_{\Psi}\zeta^{1-\gamma}}{M_{\Phi}}\}$ is invariant, and $\zeta(\si)$ and $w(\si)$ are increasing and decreasing on $\mathcal{R},$ respectively. Under the assumption  $w(0)<\frac{m_{\Psi}\zeta^{1-\gamma}(0)}{M_{\Phi}}$ we have
$\frac{dw}{d\si}<0, \ \frac{d\zeta}{d\si}>0, \quad\mbox{for}\quad 0\leq \si<\Sigma, $
and thus,
\[ \frac{m_{\Psi}}{w(\si)}-\frac{M_{\Phi}}{\zeta^{1-\gamma}(\si)}\geq \frac{m_{\Psi}}{w(0)}-\frac{M_{\Phi}}{\zeta^{1-\gamma}(0)}\equiv c_0>0, \quad\mbox{for}\quad 0\leq \si<\Sigma.\]
Therefore by virtue of \eqref{psl2} we derive the differential inequality
\bge
\frac{1}{r}\frac{d\zeta}{d\si}\geq -M_{\Phi}\zeta(\si)+m_{\Psi}\frac{\zeta^{2-\gamma}(\si)}{w(\si)}&&=\zeta^{2-\gamma}(\si)\left(\frac{1}{w(\si)}-\frac{M_{\Phi}}{m_{\Psi}\zeta^{1-\gamma}(\si)}\right)\nonumber\\
&&\geq c_0\zeta^{2-\gamma}(\si), \quad 0\leq \si<\Sigma.
 \label{eqn:5.3}
\ege
Since $2-\gamma>1$, inequality (\ref{eqn:5.3}) implies that $\zeta(\si)$ blows up in finite time
$
\si_1\leq \hat{\si}_1\equiv\frac{\zeta^{\ga-1}(0)}{(1-\ga)c_0 r}<\infty,
$
and since $\zeta(\si)=\avint u^r\,dx\leq \|u(\cdot,\si)\|^r_{\infty}$
we conclude that $u(x,\si)$ blows up in finite time $\Sigma_b\leq \hat{\si}_1.$

We now consider  the latter case when $\frac{p-1}{r}\geq 2$ then $q=\frac{p-1-r}{r}\geq 1$, and thus by virtue of Jensen's inequality, \cite{ev10}, we obtain
$
\avint u^r\cdot\left( \avint (u^{-r})^q\right)^{\frac{1}{q}}\geq \avint u^{r}\cdot\avint u^{-r}\geq 1,
$
which entails $\zeta^{\frac{1}{r}}(\si)\geq w^{-\frac{1}{p-1-r}}(\si)$, and thus by virtue of \eqref{mts2an}
\bge\label{mk1}
w(\si)\geq \zeta^{-\frac{p-1-r}{r}(\si)}=\zeta^{1-\frac{p-1}{r}}(\si)> \frac{1}{\Phi(\si)}\zeta^{1-\frac{p-1}{r}}(\si)\geq \frac{m_{\Psi}}{M_{\Phi}}\zeta^{1-\frac{p-1}{r}}(\si),
\ege
for any $\si\in[0,\Sigma)$. Since $\frac{p-1}{r}\geq 2$, the curve $\Gamma_2: w=\frac{m_{\Psi}\zeta^{1-\frac{p-1}{r}}}{M_{\Phi}}, \ \zeta>0,$
is convex and approaches $+\infty$ and $0$ as $\zeta\downarrow 0^+$ and $\zeta\uparrow+\infty$, respectively. Moreover, the curves $\Gamma_1$ and $\Gamma_2$ intersect at the point  $(\zeta,w)=(1,1),$  and therefore, $w(0)<1$ combined with \eqref{mk1} implies that $w(0)<\frac{m_{\Psi}\zeta^{1-\gamma}(0)}{M_{\Phi}}$. Thus the latter case is reduced to the former one and again  finite-time blow-up for the solution $u(x,\si)$ is established.
\end{proof}
\begin{remark}\label{rem4}
Note that in the case of a stationary domain then $\zeta(\si)$ again blows up, see \cite{KS16, KS18}, in finite time
$
\si_2\leq \hat{\si}_2\equiv\frac{\zeta^{\ga-1}(0)}{(1-\ga)c_1 r},
$
where
$
c_1\equiv \frac{1}{w(0)}-\frac{1}{\zeta^{1-\gamma}(0)},
$
and thus $u(x,\si)$ blows in finite time $\Sigma_1\leq \hat{\si}_2$ under the condition $w(0)<\zeta(0)^{1-\gamma}.$
\end{remark}
\begin{remark}
For logistically growing or shrinking domain problem \eqref{nstregm1t}-\eqref{nstregm3t}  exhibit finite-time blow-up under the assumptions of Theorem \ref{obu1} whenever
$
w(0)<M_{L}^{-(\gamma+1)}\zeta(0)^{1-\gamma},
$
where
$
M_{L}:=\sup_{(0,\infty)} L(t)=\sup_{(0,\infty)}\left(1+\frac{N\beta \left(1-\frac{1}{m}\right)}{1+\frac{1}{m}\left(e^{\beta t}-1\right)}\right).
$
In particular, for a logistically growing domain, when $m>1,$ then $M_L=L(0)=1+N\beta\left(1-\frac{1}{m}\right),$ whilst for logistically shrinking domain, when $0<m<1$ we have $M_L=\lim_{t\to+\infty}L(t)=1$ and hence in that case blow-up conditions $(1)$ and $(2)$ of Theorem \ref{obu1} coincide with the ones of \cite[Theorem 3.5]{KS16}, see also Remark \ref{rem4}.
\end{remark}
Now we present a global-in-time existence result stated as follows.
\begin{theorem}\label{thm4}
Assume that $\frac{p-1}{r}<\min\{1, \frac{2}{N}, \frac{1}{2}(1-\frac{1}{r})\}$ and $0<\gamma<1.$ Consider functions $\Phi(\si), \Psi(\si)>0$ with
\bge\label{mts2a}
\inf_{(0,\Sigma)} \Phi(\si):=m_{\Phi}>0\quad\mbox{and}\quad \sup_{(0,\Sigma)} \Psi(\si):=M_{\Psi}<+\infty,
\ege
then problem \eqref{nstregm1k}-\eqref{nstregm3k} 
has a global-in-time solution.
 \label{thm:1.3}
\end{theorem}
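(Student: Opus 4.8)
The plan is to reduce the whole statement to an a priori $L^\infty$ bound and then invoke the standard continuation criterion. Problem \eqref{nstregm1k}--\eqref{nstregm3k} possesses a unique local-in-time classical solution on a maximal interval $[0,\Sigma_{\max})$ with $\Sigma_{\max}\le\Sigma$: once $\avint u^r\,dx$ is known to stay bounded away from zero, the non-local factor $\Psi(\si)(\avint u^r\,dx)^{-\ga}$ is a locally Lipschitz, bounded perturbation, so a fixed-point argument in $C([0,\si_0];C(\overline{\Om_0}))$ (or the analytic-semigroup machinery) yields local existence, and $\Sigma_{\max}<\Sigma$ can only happen if $\|u(\cdot,\si)\|_\infty\to+\infty$ as $\si\uparrow\Sigma_{\max}$. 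Hence it suffices to show that for every $\Sigma'<\Sigma$ one has $\|u\|_{L^\infty(\Om_0\times[0,\Sigma'])}<\infty$.

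First I would control the non-local denominator from below. By the maximum principle $u>0$, and comparison with the spatially homogeneous subsolution of $\underline u'=-\Phi(\si)\underline u$ — exactly as in Proposition~\ref{lbd} and Remark~\ref{nny} — gives $\avint u^r\,dx\ge c(\Sigma')>0$ on $[0,\Sigma']$, whence $\Psi(\si)\big(\avint u^r\,dx\big)^{-\ga}\le\Lambda(\Sigma')<\infty$ there. One should keep in mind, though, that even with this lower bound the non-local factor cannot simply be discarded in favour of the local semilinear equation, since $p$ is allowed to be Sobolev-supercritical; it is the feedback through $\avint u^r\,dx$, together with the structural conditions on $p,r,N$, that has to prevent concentration.

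Next I would derive a priori bounds on low-order norms. Following the computation in the proof of Theorem~\ref{obu1}, one sets $\chi=u^{1/\al}$ and averages the resulting equation over $\Om_0$. Here $\frac{p-1}{r}<\tfrac12(1-\tfrac1r)$ forces $r>1$, in fact $r>2p-1>p$, so $r-(p-1)>1$ and $p-1+r<\tfrac32 r$ lie in favourable ranges and the dissipative gradient terms generated by $\al=\tfrac1r$ and $\al=\tfrac1{\,r-p+1}$ carry the right sign. Writing $\zeta(\si)=\avint u^r\,dx$, $y(\si)=\avint u^{\,r-p+1}\,dx$, $w(\si)=\avint u^{\,p-1+r}\,dx$, the choice $\al=\tfrac1{\,r-p+1}$ yields the clean inequality $\tfrac{1}{r-p+1}\,y'+c\,\Vert\nabla u^{(r-p+1)/2}\Vert_2^2+m_\Phi y\le M_\Psi\,\zeta^{1-\ga}$ (because $p-1+(r-p+1)=r$), while $\al=\tfrac1r$ gives $\tfrac1r\,\zeta'+c\,\Vert\nabla u^{r/2}\Vert_2^2+m_\Phi\zeta\le\Lambda\,w$. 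Combining the Hölder relation \eqref{psl1}, a Gagliardo--Nirenberg bound for $w$ (admissible since $\frac{p-1}{r}<\frac2N$), the Poincar\'e--Wirtinger inequality \eqref{pwi}, the lower bound $\zeta\ge c$, and crucially the sublinearity $1-\ga<1$, a Gr\"onwall/continuation argument should then force $\zeta$ and $y$ — hence $\|u(\cdot,\si)\|_{L^r(\Om_0)}$ — to remain bounded on $[0,\Sigma']$.

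Finally I would bootstrap from $L^r$ to $L^\infty$. Since $\frac{p-1}{r}<\frac2N$ is precisely $r>\tfrac{N(p-1)}{2}$, an Alikakos--Moser iteration over a sequence $q_k\uparrow\infty$ (starting from $q_0=r$) closes: testing \eqref{nstregm1k} with $u^{q-1}$, estimating the non-local factor by $\Lambda$, interpolating $\avint u^{\,p+q-1}\,dx$ by Gagliardo--Nirenberg against the already controlled $L^{q_{k-1}}$-norm — the interpolation exponent stays strictly subcritical exactly because $q_{k-1}\ge r>\tfrac{N(p-1)}{2}$ — and absorbing the gradient term via \eqref{pwi}, one obtains $\|u\|_{q_k}$ bounded with constants trackable through the iteration, so $k\to\infty$ gives the desired $L^\infty$ bound on $[0,\Sigma']$. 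I expect the genuine obstacle to be the previous step: a direct Gagliardo--Nirenberg estimate for $\avint u^{\,p-1+r}\,dx$ produces a \emph{superlinear} power of $\zeta$, so one must interweave the auxiliary quantity $y$, the relation $wy\ge\zeta^2$, the extra decay $\zeta^{-\ga}$ from the denominator and the bound $1-\ga<1$ in just the right way — this is where all three conditions $\frac{p-1}{r}<\min\{1,\frac2N,\frac12(1-\frac1r)\}$ and assumption \eqref{mts2a} get used simultaneously. A minor caveat is that the constants produced in the second and third steps are uniform in $\si$ only on compact subintervals of $[0,\Sigma)$, which is nevertheless all the continuation criterion requires.
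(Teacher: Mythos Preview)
Your overall architecture --- local existence, continuation criterion, a priori $L^q$ bound, then Moser iteration to $L^\infty$ --- is the right scaffolding and matches the paper. The final bootstrap step is also essentially what the paper invokes via \cite[Theorem~3.4]{KS16}. The gap is precisely where you yourself flag it: step~3.

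You propose to split the non-local term by bounding the denominator $\avint u^r$ from below by a constant and then controlling the remaining \emph{local} superlinear term through a coupled system for $\zeta,\,y,\,w$. Two problems arise. First, the relation $wy\ge\zeta^2$ gives a \emph{lower} bound for $w$, not an upper one, so it cannot be used to close the $\zeta$-inequality from above. Second, your claimed ``crucial sublinearity $1-\gamma<1$'' is trivially true and is not what makes the argument close; the genuine sublinearity comes from the standing Turing condition $\frac{p-1}{r}<\gamma$, which you never invoke. If one actually carries out a Gagliardo--Nirenberg/Young estimate on $\zeta^{1-\gamma}$ in the $y$-equation (with $v=u^{(r-p+1)/2}$), the resulting exponents collapse to the single requirement $(1-\gamma)r<r-p+1$, i.e.\ $p-1<\gamma r$ --- the Turing condition again --- and without it the inequality does not close.

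The paper avoids this tangle entirely by a different, cleaner idea: it does \emph{not} bound numerator and denominator separately. Instead, for a single exponent $\alpha\in\bigl(\tfrac{1}{r-p+1},\min\{1,\tfrac{2}{(p-1)(N-2)},\tfrac{1}{1-p+r\gamma}\}\bigr)$ and an intermediate $\beta$ with $-\alpha+1+\alpha p<\beta<\alpha r$, H\"older/Jensen in \emph{both} directions gives
\[
\frac{\avint \chi^{-\alpha+1+\alpha p}}{\bigl(\avint \chi^{\alpha r}\bigr)^\gamma}
\;\le\;\bigl\Vert \chi^{1/2}\bigr\Vert_{2\beta}^{\,2(1-\lambda)},\qquad
\lambda=\alpha\,(1-p+r\gamma)\in(0,1),
\]
so the whole non-local quotient is a \emph{sublinear} power of a single norm, with the sublinearity coming directly from $p-1<r\gamma$. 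Since $2\beta<\tfrac{2N}{N-2}$, Sobolev and Young then absorb this into $\Vert\chi^{1/2}\Vert_{H^1}^2$, yielding $\avint\chi=\avint u^{1/\alpha}\le C$ uniformly in~$\sigma$; letting $\tfrac{1}{\alpha}\uparrow r-p+1$ gives $\Vert u\Vert_q\le C_q$ for all $q<r-p+1$, and the bootstrap kicks in because $\tfrac{r-p+1}{p}>1$ (this is where the third condition $\tfrac{p-1}{r}<\tfrac12(1-\tfrac1r)$ is used). The message is that the denominator should be kept in the estimate rather than thrown away: its decay, combined with the Turing condition, is exactly what produces the sublinearity that closes the energy inequality in one stroke.
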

\begin{proof}
We assume $\frac{p-1}{r}<\min\{ 1, \frac{2}{N}, \frac{1}{2}(1-\frac{1}{r})\}$ and $0<\gamma<1$. We also assume $N\geq 2$ since the complementary case $N=1$ is simpler.

Note that for $p>1$, we have $\frac{p-1}{r}<\frac{2}{N}$ and $r>p$. Therefore we have
$ 0<\frac{1}{r-p+1}<\min\left\{ 1, \frac{1}{p-1}\cdot\frac{2}{N-2}, \frac{1}{1-p+r\gamma}\right\},$
since $0<\gamma<1$. Chossing $\frac{1}{r-p+1}<\alpha<\min\{ 1, \frac{1}{p-1}\cdot\frac{2}{N-2}, \frac{1}{1-p+r\gamma} \}$, we derive
$ \max\left\{ \frac{N-2}{N}, \frac{1}{\alpha r}\right\}<\frac{1}{-\alpha+1+\alpha p}, $
and then we can find $\beta>0$ such that
\begin{equation}
\max\left\{ \frac{N-2}{N}, \frac{1}{\alpha r}\right\} <\frac{1}{\beta}<\frac{1}{-\alpha+1+\alpha p}<2,
 \label{eqn:15}
\end{equation}
which satisfies
\begin{equation}
\frac{\beta}{\alpha r}<1<\frac{\beta}{-\alpha+1+\alpha p}.
 \label{eqn:16}
\end{equation}
Recalling that $\chi=u^{\frac{1}{\alpha}}$ satisfies (\ref{nob1})-(\ref{nob3}) with $\displaystyle{\avint \frac{u^{p-1+\frac{1}{\alpha}}}{\left( \avint u^r\right)^\gamma}= \frac{\avint \chi^{-\alpha+1+\alpha p}}{\left( \avint \chi^{\alpha r}\right)^\gamma}},$
then by virtue of (\ref{eqn:16})
\[ \avint \chi^{-\alpha+1+\alpha p}\leq \left( \avint \chi^\beta\right)^{\frac{-\alpha+1+\alpha p}{\beta}}\quad\mbox{and}\quad \left( \avint \chi^{\alpha r}\right)^\gamma \geq \left( \avint \chi^\beta\right)^{\frac{\alpha r}{\beta}\cdot\gamma}, \]
thus we obtain the following estimate
\begin{equation}
\frac{\avint \chi^{-\alpha+1+\alpha p}}{\left( \avint\chi^{\alpha r}\right)^\gamma}\leq \left( \avint \chi^\beta\right)^{\frac{-\alpha+1+\alpha p-\alpha r\gamma}{\beta}}=\Vert \chi^{\frac{1}{2}}\Vert_{2\beta}^{2(1-\lambda)},
 \label{eqn:3.3}
\end{equation}
with $0<\lambda=\alpha\{1-p+r\gamma\}<1$, recalling that $\frac{p-1}{r}<\gamma$ and $\alpha<\frac{1}{1-p+r\gamma}$.
Averaging (\ref{nob1}) over $\Om_0$ leads to the following,
\begin{equation}\label{ob4}
\alpha\frac{d}{d\si}\avint \chi+4\alpha(1-\alpha)\avint\vert \nabla \chi^{\frac{1}{2}}\vert^2+\Phi(\si)\avint \chi=\Psi(\si)\frac{\avint \chi^{-\alpha+1+\alpha p}}{\left( \avint\chi^{\alpha r}\right)^\gamma},
\end{equation}
and hence
\[ \frac{d}{d\si}\avint \chi+4(1-\alpha)\avint\vert\nabla \chi^{\frac{1}{2}}\vert^2+\frac{m_{\Phi}}{\alpha}\avint \chi\leq \frac{M_{\Psi}}{\alpha}\Vert \chi^{\frac{1}{2}}\Vert_{2\beta}^{2(1-\lambda)}, \]
by virtue of \eqref{mts2a}, \eqref{eqn:15} and \eqref{eqn:3.3}. Now since $1<2\beta<\frac{2N}{N-2}$ holds due to (\ref{eqn:15}) and applying first Sobolev's and then Young's inequalities we derive
\[ \frac{d}{d\si}\avint\chi+c\Vert \chi^{\frac{1}{2}}\Vert_{H^1}^2+\frac{M_{\Psi}}{\alpha}\avint \chi\leq C, \]
which implies $\avint \chi\leq C.$
Since $\frac{1}{\alpha}$ can be chosen to be close to $r-p+1$, the above estimate gives
\begin{equation}
\Vert u(\cdot,\si)\Vert_q\leq C_q, \quad\mbox{for any}\quad 1\leq q<r-p+1,
 \label{eqn:15h}
\end{equation}
recalling that $\chi=u^{\frac{1}{\alpha}}.$ Note that $\frac{p-1}{r}<\frac{1}{2}(1-\frac{1}{r})$ implies $\frac{r-p+1}{p}>1$ and thus we obtain global-in-time existence
by using the same bootstrap argument as in \cite[Theorem 3.4]{KS16}.
\end{proof}
\begin{remark}
Note that condition \eqref{mts2a} is satisfied  in the case of an exponential shrinking domain as indicated in Remark \ref{aal5},  see in particular \eqref{aal3} and \eqref{aal4}.
\end{remark}
\section{Turing Instability and Pattern Formation}\label{tipf}
In the current section, we present a Turing-instability result for problem \eqref{nstregm1k}-\eqref{nstregm3k}, restricting ourselves to the radial case $\Om_0=B_1(0):=\{x\in \R^N \mid \vert x\vert<1\}.$ Then the solution of $u$ \eqref{nstregm1k}-\eqref{nstregm3k} is radially symmetric, i.e. $u(x,t)=u(R,t)$ for $R=|x|,$ and thus it satisfies the following
 \bge
 && u_{\si}=\Delta_R u-\Phi(\si)u+\displaystyle\frac{\Psi(\si)u^p}{\left(\avint u^r\right)^{\ga}}, \quad R\in (0,1),\; \si\in (0,\Si), \label{rnstregm1}\\
&& u_R(0,\si)=u(1,t)=0, \quad  t\in (0,\Si), \label{rnstregm2}\\
 && u(R,0)=u_0(R),\quad 0<R<1,  \label{rnstregm3}
\ege
where $\Delta_R u:=u_{RR}+\frac{N-1}{R}u_R.$

It can be seen, see also \cite{KS16, KS18}, that under the Turing condition \eqref{tc},
the spatial homogeneous solutions of \eqref{nstregm1k}-\eqref{nstregm3k}, i.e. the solution of the problem
$\frac{du}{d\si}=-\Phi(\si)u+\Psi(\si)u^{p-r\gamma}, \quad \left. u\right\vert_{\si=0}=\bar{u}_0>0, $
never exhibits blow-up, as long as $\Phi(\si), \Psi(\si)$ are both bounded, since the nonlinearity $f(u)=u^{p-r\gamma}$ is sub-linear. On the other hand, considering spatial inhomogeneous solutions of \eqref{nstregm1k}-\eqref{nstregm3k} we will show below,  see Theorem \ref{thm:6.1}, that a diffusion-driven instability phenomenon occurs when spiky type of initial conditions are considered.
Indeed, next we consider an initial datum of the form, \cite{hy95},
\begin{equation}
u_0(R)=\lambda\psi_\delta(R),
 \label{eqn:6.1}
\end{equation}
with $0<\lambda\ll 1$ and
\begin{equation}\label{id}
\psi_\delta(R)=\begin{cases}
      R^{-a},&  \de\leq R\leq 1, \\
      \de^{-a}\left(1+\frac{a}{2}\right)-\frac{a}{2}\de^{-(a+2)}R^2, & 0\leq R<\de,
      \end{cases}
\end{equation}
where $a=\frac{2}{p-1}$ and $0<\de<1$. It is easily seen, \cite{KS16, KS18}, that for $\psi_{\delta}$ the following lemma holds.
\begin{lemma}\label{kkl1}
For the function $\psi_{\delta}$ defined by \eqref{id} we have:
\begin{enumerate}
\item[(i)] For any $0<\delta<1,$ there holds in a weak sense
\begin{equation}
\displaystyle{ \Delta_R \psi_\delta\geq -Na\psi_\delta^p}.
 \label{eqn:6.3}
\end{equation}
\item[(ii)] If $m>0$ and $N>ma$, we have
\begin{equation}\label{ine}
\avint \psi_\delta^m=\frac{N}{N-ma}+O\left(\de^{N-ma}\right), \quad \delta\downarrow 0.
\end{equation}
\end{enumerate}
\end{lemma}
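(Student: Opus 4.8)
The plan is to prove both items by direct computation on the two pieces that define $\psi_\delta$, using throughout the identity $ap=a+2$ (equivalent to $a=\tfrac{2}{p-1}$) and the elementary fact that $a>0$ since $p>1$.

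\emph{Part (i).} On the annular region $\delta<R<1$ we have $\psi_\delta(R)=R^{-a}$, so $\psi_\delta'=-aR^{-a-1}$, $\psi_\delta''=a(a+1)R^{-a-2}$, and hence $\Delta_R\psi_\delta=a(a+1)R^{-a-2}-(N-1)aR^{-a-2}=a(a+2-N)R^{-a-2}$. Since $ap=a+2$ we have $\psi_\delta^{\,p}=R^{-ap}=R^{-a-2}$, so the claimed bound $\Delta_R\psi_\delta\ge -Na\psi_\delta^{\,p}$ reduces to $a(a+2-N)\ge -Na$, i.e.\ $a+2\ge 0$, which is true. On the inner region $0<R<\delta$ the function $\psi_\delta$ is the displayed quadratic in $R^2$, so $\psi_\delta'=-a\delta^{-(a+2)}R$, $\psi_\delta''=-a\delta^{-(a+2)}$, and $\Delta_R\psi_\delta=-Na\delta^{-(a+2)}$; using $ap=a+2$ once more, the inequality $-Na\delta^{-(a+2)}\ge -Na\psi_\delta^{\,p}$ is equivalent to $\psi_\delta(R)\ge\delta^{-a}$ on $(0,\delta)$, which holds because $\psi_\delta$ is strictly decreasing on $(0,\delta)$ with $\psi_\delta(\delta^-)=\delta^{-a}$. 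To upgrade these pointwise inequalities on the two open pieces to the weak statement \eqref{eqn:6.3}, I would check that $\psi_\delta$ is $C^1$ across $R=\delta$: both one-sided derivatives there equal $-a\delta^{-a-1}$. Consequently $\psi_\delta\in C^{1,1}(\overline{B_1(0)})$, its distributional Laplacian coincides with the a.e.-defined classical one (no surface measure is produced on $\{R=\delta\}$ because $\nabla\psi_\delta$ has no jump), and the pointwise bound integrates against nonnegative test functions to give \eqref{eqn:6.3}.

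\emph{Part (ii).} Passing to polar coordinates, $\avint\psi_\delta^{\,m}=\frac{1}{|B_1(0)|}\int_{B_1(0)}\psi_\delta(|x|)^m\,dx=N\int_0^1\psi_\delta(R)^m R^{N-1}\,dR$, and I split the radial integral at $R=\delta$. The outer part is $N\int_\delta^1 R^{N-1-ma}\,dR=\frac{N}{N-ma}\bigl(1-\delta^{N-ma}\bigr)$, where the hypothesis $N>ma$ is exactly what makes the exponent $N-ma>0$ and the integral finite as $\delta\downarrow0$; this contributes $\frac{N}{N-ma}+O(\delta^{N-ma})$. For the inner part, on $(0,\delta)$ one has $\delta^{-a}\le\psi_\delta(R)\le\psi_\delta(0)=(1+\tfrac a2)\delta^{-a}$, so $N\int_0^\delta\psi_\delta^{\,m}R^{N-1}\,dR$ lies between $\delta^{N-ma}$ and $(1+\tfrac a2)^m\delta^{N-ma}$, hence is $O(\delta^{N-ma})$. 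Adding the two contributions yields \eqref{ine}.

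\emph{Main difficulty.} There is no substantive obstacle here; the proof is a matter of careful bookkeeping. The only points requiring attention are the systematic use of the exponent identity $ap=a+2$ to line up $\psi_\delta^{\,p}$ with $\Delta_R\psi_\delta$, and the verification that $\psi_\delta$ is genuinely $C^1$ at $R=\delta$, so that the weak inequality in (i) carries no unfavourable singular term concentrated on the sphere $\{|x|=\delta\}$ — a jump of the wrong sign there would break the supersolution property that Lemma \ref{kkl1}(i) is designed to provide in the subsequent Turing-instability argument.
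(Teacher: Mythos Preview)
Your proof is correct and complete. The paper itself does not prove this lemma but merely cites \cite{KS16,KS18} (and implicitly \cite{hy95}, where the construction originates); your direct computation on the two pieces, using $ap=a+2$ and the $C^1$ matching at $R=\delta$, is exactly the standard verification one finds in those references.
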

Now, if we consider
\begin{equation}
\mu>1+r\gamma
 \label{eqn:6.15}
\end{equation}
and set
$\alpha_1=\sup_{0<\delta<1}\frac{1}{\bar{\psi}_\delta^\mu}\avint\psi_\delta^p$, and $\alpha_2=\inf_{0<\delta<1}\frac{1}{\bar{\psi}_\delta^\mu}\avint\psi_\delta^p$,
then since $p>\frac{N}{N-2}$, relation(\ref{ine}) is applicable for $m=p$ and $m=1$, and thus owing to \eqref{eqn:6.15} we obtain
\bge\label{at1} 0<\alpha_1, \alpha_2<\infty. \ege
Furthermore, it follows that
\begin{equation}
d\equiv\inf_{0<\delta<1}\left( \frac{1}{2\alpha_1}\right)^{\frac{r\gamma}{p}}\left( \frac{1}{2\bar{\varphi}_\delta}\right)^{\frac{r\gamma}{p}\mu}>0,
 \label{eqn:6.7}
\end{equation}
and the initial data $u_0$ defined by \eqref{eqn:6.1} also satisfies the following lemma, see also \cite{KS16, KS18},
\begin{lemma}
If $p>\frac{N}{N-2}$ and $\frac{p-1}{r}<\gamma$, there exists $\lambda_0=\lambda_0(d)>0$ such that for any $0<\lambda\leq \lambda_0$ there holds
\begin{equation}
\Delta_R u_0+d\lambda^{-r\gamma}u_0^p\geq 2u_0^p.
 \label{eqn:6.4}
\end{equation}
 \label{lem:6.2}
\end{lemma}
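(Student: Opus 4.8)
The plan is to read off the desired inequality directly from part (i) of Lemma~\ref{kkl1}, after undoing the scaling $u_0=\lambda\psi_\delta$, and then to pick $\lambda_0$ small enough that the non-local correction $d\lambda^{-r\gamma}u_0^p$ dominates the (wrongly signed) lower bound for $\Delta_R u_0$. First, since the radial Laplacian is linear and $u_0=\lambda\psi_\delta$, estimate \eqref{eqn:6.3} gives, in the same weak sense,
\[
\Delta_R u_0=\lambda\,\Delta_R\psi_\delta\geq-Na\,\lambda\,\psi_\delta^p=-Na\,\lambda^{1-p}\,u_0^p,
\]
where in the last step we used $\psi_\delta^p=\lambda^{-p}u_0^p$. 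Consequently
\[
\Delta_R u_0+d\lambda^{-r\gamma}u_0^p\geq\bigl(d\lambda^{-r\gamma}-Na\,\lambda^{1-p}\bigr)u_0^p
=\lambda^{-r\gamma}\bigl(d-Na\,\lambda^{\,r\gamma-(p-1)}\bigr)u_0^p .
\]

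The only place the hypotheses are genuinely used is the observation that $\frac{p-1}{r}<\gamma$ makes the exponent $r\gamma-(p-1)$ strictly positive, so that $\lambda^{\,r\gamma-(p-1)}\to0$ as $\lambda\downarrow0$; equivalently, the ``good'' power $\lambda^{-r\gamma}$ beats the ``bad'' power $\lambda^{1-p}$ for small $\lambda$. Recalling that $d>0$ by \eqref{at1}--\eqref{eqn:6.7} (here $p>\frac{N}{N-2}$ enters, as it guarantees $N>pa$ and $N>a$, so that Lemma~\ref{kkl1}(ii) applies with $m=p$ and $m=1$ and both $\alpha_1$ and $\sup_{0<\delta<1}\bar\psi_\delta$ are finite and positive), pick $\lambda_1=\lambda_1(d)>0$ with $Na\,\lambda^{\,r\gamma-(p-1)}\leq d/2$ for all $0<\lambda\leq\lambda_1$. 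Then the parenthesis above is at least $d/2$, hence
\[
\Delta_R u_0+d\lambda^{-r\gamma}u_0^p\geq\tfrac{d}{2}\,\lambda^{-r\gamma}\,u_0^p .
\]

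Finally, set $\lambda_0=\lambda_0(d):=\min\{\lambda_1,\,(d/4)^{1/(r\gamma)}\}$. Then for every $0<\lambda\leq\lambda_0$ we have $\tfrac{d}{2}\lambda^{-r\gamma}\geq2$, and since $u_0=\lambda\psi_\delta>0$ throughout $(0,1)$ we may multiply through by $u_0^p$ to conclude that \eqref{eqn:6.4} holds. I do not expect a substantive obstacle: the argument is essentially a one-line consequence of \eqref{eqn:6.3} together with an elementary comparison of powers of $\lambda$; the only points that require care are that the inequality is understood in the weak sense inherited from \eqref{eqn:6.3}, and that $\lambda_0$ unavoidably depends on the fixed constant $d$ (equivalently on $N,p,r,\gamma$), which is harmless because $d$ is determined before $\lambda$ is chosen.
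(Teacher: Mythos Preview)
Your proof is correct and is the natural argument: scale \eqref{eqn:6.3} by $\lambda$, compare the powers $\lambda^{-r\gamma}$ and $\lambda^{1-p}$ using the Turing condition $r\gamma>p-1$, and then shrink $\lambda$ further so that the coefficient exceeds $2$. The paper does not supply its own proof of this lemma but simply refers to \cite{KS16,KS18}; your write-up is exactly the kind of computation those references contain, so there is nothing to contrast.
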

Hereafter, we fix $0<\lambda\leq \lambda_0=\lambda_0(d)$ so that (\ref{eqn:6.4}) is satisfied.  Given $0<\delta<1$, let $\Si_\delta>0$ be the maximal existence time of the solution to (\ref{rnstregm1})-(\ref{rnstregm3}) with initial data of the form \eqref{eqn:6.1}-\eqref{id}.
Next, we introduce the new variable $z=e^{\int^{\si}\Phi(s)\,ds }u,$ so that the linear dissipative term $-\Phi(\si) u$ is eliminated and then $z$ satisfies
\bge
&&z_\si= \Delta_R z +K(\si)z^p,\quad R\in (0,1),\; \si\in (0,\Si_\delta),\label{tbsd3}\\
&&z_R(0,\si)=z_R(1,\si)=0,\quad  \si\in (0,\Si_\delta),\label{tbsd3a}\\
&& z(R,0)=u_0(R),\quad 0<R<1, \label{tbsd3b}
 \ege
where
\begin{equation} \label{nnt}
K(\si)=\frac{\Psi(\si)e^{(1+r\gamma-p)\int^\si \Phi(s)\,ds}}{\Big(\displaystyle\avint z^r \Big)^{\gamma}}.
\end{equation}
It is clear that as long as $\Phi(\si)$ is bounded then $u$ blows-up in finite time if and only if $z$ does so.
Assuming now that both $\Phi(\si)$ and $\Psi(\si)$ are positive and bounded,
 which is the case for the evolution provided by $\psi(\si)$ satisfying \eqref{diq1} or for an exponential shrinking domain as indicated in Remarks \ref{rem1} and \ref{aal5}, then by virtue of (\ref{eqn:1.17}) we have
\begin{equation}
0< K(\si)=\frac{\Psi(\si)e^{(1-p)\int^\si \Phi(s)\,ds}}{\Big(\displaystyle\avint u^r \Big)^{\gamma}}\leq C<\infty,
 \label{eqn:6.8}
\end{equation}
thus averaging of (\ref{tbsd3}) entails
\begin{equation}
\frac{d\bar{z}}{d\si}=K(\si)\avint z^p,
 \label{eqn:6.8h}
\end{equation}
and thus \eqref{eqn:6.8} yields
\begin{equation}
\bar{z}(\si)\geq \bar{z}(0)=\bar{u}_0:=\avint u_0.
 \label{eqn:6.9}
\end{equation}
Henceforth, the positivity and the boundedness of $\Phi(\si), \Psi(\si)$ as well as the Turing condition \eqref{tc} are imposed. Moving towards the proof of Theorem \ref{thm:6.1} we first need to establish some auxiliary results.
Next, we provide a useful estimate of $z$ that will be frequently used throughout the text.
\begin{lemma}\label{lem3}
The solution $z$ of problem \eqref{tbsd3}-\eqref{tbsd3b} satisfies
\bge
&& R^N z(R,\si)\leq \bar{z}(\si) \quad\mbox{in}\quad(0,1)\times (0,\Si_{\de}),
 \label{eqn:6.10}\\
&& \text{and} \notag \\
&& z_R\left(\frac{3}{4},\si\right)\leq -c, \ \ 0\leq \si<\Si_\delta, \label{eqn:6.11}
\ege
for any $0<\delta<1$ and some positive constant $c.$
\end{lemma}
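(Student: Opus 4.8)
The plan is to derive both bounds from a single fact: the solution $z(\cdot,\si)$ stays radially non-increasing in $R$ on $(0,1)\times(0,\Si_\delta)$. This monotonicity is inherited from the initial datum: since $a=\tfrac{2}{p-1}>0$, the profile $\psi_\delta$ in \eqref{id} is $C^1$ and strictly decreasing in $R$ on $(0,1)$ (it equals the decreasing function $R^{-a}$ on $[\de,1]$, is a downward parabola with vanishing slope at $0$ on $[0,\de)$, and the two pieces join $C^1$ at $R=\de$), so $u_0'\le 0$. Differentiating \eqref{tbsd3} in $R$ and setting $v=z_R$ gives the linear parabolic equation
\[
v_\si=v_{RR}+\frac{N-1}{R}\,v_R+\Bigl(pK(\si)z^{p-1}-\frac{N-1}{R^2}\Bigr)v,\qquad v(0,\si)=v(1,\si)=0,\quad v(R,0)=u_0'(R)\le 0,
\]
where the boundary conditions come from radial smoothness at $R=0$ and from \eqref{tbsd3a} at $R=1$. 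The maximum principle then yields $v=z_R\le 0$; the only subtlety is the singular coefficient $-(N-1)/R^2$ near $R=0$, which is dealt with in the usual way (e.g.\ comparing on $[\rho,1]$, using $v=O(R)$ near the origin and letting $\rho\downarrow0$). Once $z_R\le 0$ is known, \eqref{eqn:6.10} is immediate: writing the spatial average in radial coordinates, $\bar z(\si)=N\int_0^1 z(s,\si)s^{N-1}\,ds\ge N\int_0^R z(s,\si)s^{N-1}\,ds\ge N z(R,\si)\int_0^R s^{N-1}\,ds=R^N z(R,\si)$, for every $R\in(0,1)$.

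For \eqref{eqn:6.11} we must upgrade the sign of $z_R$ to a quantitative one-sided bound at the fixed radius $R=\tfrac34$. Let $H(R,\si)$ solve the pure Neumann heat equation $H_\si=\Delta_R H$ on $(0,1)$ with $H(\cdot,0)=u_0$. Because $K(\si)z^p\ge0$, the function $z$ is a supersolution of the heat equation, and more precisely, setting $d=z_R-H_R$, the extra term $pK z^{p-1}z_R$ arising in the equation for $d$ is $\le0$ by the first step, so that $d$ satisfies $d_\si-\Delta_R d+\tfrac{N-1}{R^2}\,d\le0$ with $d(\cdot,0)=0$ and vanishing boundary data; the maximum principle gives $d\le0$, i.e.
\[
z_R(R,\si)\le H_R(R,\si)\qquad\text{on }(0,1)\times(0,\Si_\delta).
\]
Since $u_0$ is strictly decreasing, the classical fact that decreasing data stay decreasing under the Neumann heat flow gives $H_R(\cdot,\si)<0$ in $(0,1)$ for every $\si\ge0$, and in particular $H_R(\tfrac34,\si)<0$; combined with the continuity of $\si\mapsto H_R(\tfrac34,\si)$ this produces a constant $c>0$ with $z_R(\tfrac34,\si)\le H_R(\tfrac34,\si)\le -c$, which is \eqref{eqn:6.11}.

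The main obstacle is the last step, and specifically the \emph{uniformity of $c$ in $\si$}: the Neumann heat flow of $u_0$ flattens as $\si\to\infty$, so $H_R(\tfrac34,\si)\to0$, and an honestly $\si$-independent constant must come either from an a priori upper bound on the maximal time $\Si_\delta$ (which is legitimate here, since the subsequent blow-up analysis shows $\Si_\delta$ is finite and quantitatively controlled, so one only ever needs \eqref{eqn:6.11} on a bounded $\si$-interval) or from replacing $H$ by a fixed barrier adapted to the spike profile $\psi_\delta$ rather than the decaying linear heat flow. The steps establishing $z_R\le0$ and \eqref{eqn:6.10} are routine once the monotonicity of $\psi_\delta$ is exploited.
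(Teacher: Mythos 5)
Your treatment of the monotonicity and of \eqref{eqn:6.10} is essentially the paper's argument: the paper also applies the maximum principle to a radial-derivative quantity to get $z_R\le 0$ and then integrates exactly as you do. The one technical difference is that the paper works with $w=R^{N-1}z_R$ rather than $v=z_R$; with that choice the singular potential $-(N-1)/R^2$ disappears ($w$ satisfies $w_\si-w_{RR}+\tfrac{N-1}{R}w_R-pK(\si)z^{p-1}w=0$ with $w(0,\si)=w(1,\si)=0$), so the maximum principle applies without the excision-and-limit argument near $R=0$ that you only sketch. Your verification that $u_0'\le 0$ (the $C^1$ matching of the two pieces of $\psi_\delta$ at $R=\de$) is correct and is used implicitly in the paper as well.

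For \eqref{eqn:6.11} you take a genuinely different route (comparison of $z_R$ with the radial derivative of the Neumann heat flow of $u_0$ on all of $(0,1)$, versus the paper's localized maximum principle for $w$ on the strip $(\tfrac12,1)$ after observing $pK(\si)z^{p-1}w\le 0$), but you do not finish: you never actually produce the constant $c$, and you say so yourself. Measured against the statement, which asserts \eqref{eqn:6.11} on all of $[0,\Si_\delta)$, this is a genuine gap: as you observe, $H_R(\tfrac34,\si)\to 0$ as $\si\to\infty$, so your comparison cannot yield a $\si$-independent $c$ without an a priori bound on the time horizon. Be aware, however, that the paper's own justification is no more complete at this point: it deduces $w\le -c$ on $(\tfrac12,1)\times(0,\Si_\delta)$ from $w(R,0)\le -c$, $w\le 0$ on the lateral boundaries and the differential inequality, which the maximum principle does not give (the natural comparison function with zero lateral data decays in time, and indeed $w(1,\si)=0$). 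The honest repair is the one you point to: in every subsequent use (the sign of $\vartheta(\tfrac34,\si)$ in the proof of Lemma \ref{nkl3}) the estimate is invoked only for $\si<\min\{\si_1,\si_0(\delta)\}\le 1$, and on a time interval of length at most $1$ either your heat-flow comparison or the paper's barrier yields a constant $c>0$ depending only on $a\lambda$ (hence uniform in $\de$). If you restate \eqref{eqn:6.11} for $0\le\si\le\min\{1,\Si_\delta\}$, your argument closes; as written, it does not prove the lemma.
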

\begin{proof}
Let us define $w=R^{N-1}z_{R}$, then it is easily checked that $w$ satisfies
 $\mathcal{H}[w]=0, \quad\mbox{for}\quad (R,\si)\in (0,1)\times (0,\Si_{\de})$, with 
 $w(0,\si)=w(1,\si)=0$, for $\si\in(0,\Si_{\de})$, and 
$w(R,0)<0$, for $0<R<1$, 
where
$\mathcal{H}[w]\equiv w_\si-w_{RR}+\frac{N-1}{\rho}w_R-p K(\si)z^{p-1}w$.
Owing to the maximum principle, and recalling that $K(\si)$ is bounded by \eqref{eqn:6.8}, we get that $w\leq 0$, which implies  $z_{R}\leq 0$ in $(0,1)\times (0,\Si_{\de})$. Accordingly, inequality (\ref{eqn:6.10}) follows since
\begin{eqnarray*}
R^N z(R,\si) & = & z(R,\si)\int_0^R N s^{N-1}ds\leq \int_0^R Nz(s,\si)s^{N-1}\,ds \\
& \leq & \int_0^1Nz(s,\si)s^{N-1}\,ds=\avint z=\bar{z}(\si).
\end{eqnarray*}
Now given that $w\leq 0$ together with \eqref{eqn:6.8} we have
\[ w_\si-w_{RR}+\frac{N-1}{\rho}w_R=pK(\si)z^{p-1}w\leq 0 \quad\mbox{in}\quad (0,1)\times (0,\Si_{\de}),\]
with $w\left(\frac{1}{2},\si\right)\leq0,\quad w\left(1,\si\right)\leq 0$, for $\si\in (0,\Si_{\de})$, and
$w(R,0)=\rho^{N-1}u'_{0}(R)\leq -c$, for $\frac{1}{2}<\rho<1$,
which implies $w\leq -c$ in $(\frac{1}{2},1)\times (0, \Si_\delta)$, and thus (\ref{eqn:6.11}) holds.
\end{proof}
\begin{lemma}\label{kkl2}
Take $\varepsilon>0$ and $1<q<p$ then $\vartheta$ defined as
\be\label{ik1}
 \vartheta:=R^{N-1}z_R+\varepsilon\cdot\frac{R^Nz^q}{\bar{z}^{\gamma+1}},
\ee
satisfies
\begin{eqnarray}
 \qquad\mathcal{H}[\vartheta]\leq -\frac{2q\varepsilon}{\bar{z}^{\gamma+1}}z^{q-1}\vartheta &&+\frac{\varepsilon R^Nz^q}{\bar{z}^{2(\gamma+1)}}\left\{2q\varepsilon z^{q-1}\right.\no\\
&&\left. -m_{\Psi}(\gamma+1)\bar{z}^{\gamma-r\gamma}\avint z^p-m_{\Psi}(p-q)z^{p-1}\bar{z}^{\gamma+1-r\gamma}\right\},
 \label{eqn:6.13h}
\end{eqnarray}
for  $(R,\si)\in (0,1)\times (0,\Si_{\de}),$ where $m_{\Psi}=\inf_{\si\in (0,\Si_\de)}\Psi(\si)>0.$
 \label{lem:6.4}
\end{lemma}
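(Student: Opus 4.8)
The plan is to split $\vartheta=\vartheta_1+\vartheta_2$ with $\vartheta_1:=R^{N-1}z_R$ and $\vartheta_2:=\varepsilon R^{N}z^{q}\bar z^{-(\gamma+1)}$, to use that $\mathcal H$ acts linearly on its argument, and to exploit that $\mathcal H[\vartheta_1]=0$. This last identity is exactly what was verified (for $w=R^{N-1}z_R$) inside the proof of Lemma~\ref{lem3}: differentiating \eqref{tbsd3} in $R$ and writing the outcome back in terms of $R^{N-1}z_R$ makes every term cancel. Hence $\mathcal H[\vartheta]=\mathcal H[\vartheta_2]$, and the whole statement reduces to computing $\mathcal H[\vartheta_2]$ and then re-absorbing one term into $\vartheta$.

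For $\mathcal H[\vartheta_2]$ I would differentiate directly: wherever $z_\sigma$ shows up (through $\partial_\sigma\vartheta_2$) I substitute $z_\sigma=z_{RR}+\tfrac{N-1}{R}z_R+Kz^{p}$ from \eqref{tbsd3}, and for the $\sigma$-derivative of $\bar z^{-(\gamma+1)}$ I use $\tfrac{d}{d\sigma}\bar z^{-(\gamma+1)}=-(\gamma+1)\bar z^{-(\gamma+2)}\tfrac{d\bar z}{d\sigma}$ together with $\tfrac{d\bar z}{d\sigma}=K\avint z^{p}$ from \eqref{eqn:6.8h}. After the cancellations (the $R^{N-2}z^{q}$ contributions from $-\vartheta_{2,RR}$ and $\tfrac{N-1}{R}\vartheta_{2,R}$ drop pairwise, the $R^{N}z^{q-1}z_{RR}$ contributions from $\partial_\sigma\vartheta_2$ and $-\vartheta_{2,RR}$ drop pairwise, and the first-order terms collapse to $-2q\varepsilon\bar z^{-(\gamma+1)}R^{N-1}z^{q-1}z_R$), one is left with
\begin{align*}
\mathcal H[\vartheta_2]={}&-\frac{2q\varepsilon}{\bar z^{\gamma+1}}R^{N-1}z^{q-1}z_R-\frac{q(q-1)\varepsilon}{\bar z^{\gamma+1}}R^{N}z^{q-2}z_R^{2}\\
&-\frac{(\gamma+1)\varepsilon K\avint z^{p}}{\bar z^{\gamma+2}}R^{N}z^{q}-\frac{(p-q)\varepsilon K}{\bar z^{\gamma+1}}R^{N}z^{p+q-1}.
\end{align*}
Since $q>1$ the second summand is $\le 0$ and may be discarded. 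Finally, from the very definition of $\vartheta$ one has $R^{N-1}z_R=\vartheta-\varepsilon R^{N}z^{q}\bar z^{-(\gamma+1)}$, so the first summand equals $-\tfrac{2q\varepsilon}{\bar z^{\gamma+1}}z^{q-1}\vartheta+\tfrac{2q\varepsilon^{2}}{\bar z^{2(\gamma+1)}}R^{N}z^{2q-1}$; collecting the remaining three terms over the common factor $\varepsilon R^{N}z^{q}\bar z^{-2(\gamma+1)}$ then produces precisely the right-hand side of \eqref{eqn:6.13h}, except with $K\bar z^{\gamma}$ in place of $m_\Psi\bar z^{\gamma-r\gamma}$ and $Kz^{p-1}\bar z^{\gamma+1}$ in place of $m_\Psi z^{p-1}\bar z^{\gamma+1-r\gamma}$.

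The one genuinely substantive step left is the pointwise lower bound $K(\sigma)\ge m_\Psi\,\bar z(\sigma)^{-r\gamma}$; once this is available, because the factor $\varepsilon R^{N}z^{q}\bar z^{-2(\gamma+1)}$ is non-negative, replacing $K\bar z^{\gamma}$ by $m_\Psi\bar z^{\gamma-r\gamma}$ and $Kz^{p-1}\bar z^{\gamma+1}$ by $m_\Psi z^{p-1}\bar z^{\gamma+1-r\gamma}$ only decreases the right-hand side, which is exactly \eqref{eqn:6.13h}. To obtain the bound on $K$, write $K(\sigma)=\Psi(\sigma)\,e^{(1+r\gamma-p)\int^{\sigma}\Phi(s)\,ds}\big(\avint z^{r}\big)^{-\gamma}$ as in \eqref{nnt}: the Turing condition \eqref{tc} says $p-r\gamma<1$, i.e.\ $1+r\gamma-p>0$, so the exponential is $\ge 1$ since $\Phi>0$ and the integral vanishes at $\sigma=0$ (cf.\ \eqref{eqn:6.9}); Jensen's inequality applied to the concave map $t\mapsto t^{r}$ (using $0<r\le 1$ and $z>0$) gives $\avint z^{r}\le\bar z^{\,r}$, hence $(\avint z^{r})^{\gamma}\le\bar z^{\,r\gamma}$; and $\Psi\ge m_\Psi$ by definition of $m_\Psi$. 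Combining the three inequalities yields $K\ge m_\Psi\bar z^{-r\gamma}$, completing the proof. The main obstacle is therefore the careful bookkeeping in $\mathcal H[\vartheta_2]$ — arranging the differentiation so that the second-order spatial terms cancel — together with isolating this precise $K$-estimate; everything else is mechanical.
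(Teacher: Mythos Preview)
Your overall strategy is identical to the paper's: write $\vartheta=R^{N-1}z_R+\vartheta_2$ with $\vartheta_2=\varepsilon R^{N}z^{q}\bar z^{-(\gamma+1)}$, use $\mathcal H[R^{N-1}z_R]=0$, compute $\mathcal H[\vartheta_2]$ directly, discard the non-positive term $-q(q-1)\varepsilon\bar z^{-(\gamma+1)}R^{N}z^{q-2}z_R^{2}$, and then substitute $R^{N-1}z_R=\vartheta-\vartheta_2$ to pull out the contribution $-\tfrac{2q\varepsilon}{\bar z^{\gamma+1}}z^{q-1}\vartheta$. Your four-term expression for $\mathcal H[\vartheta_2]$ is exactly the simplified form of the paper's display \eqref{tbsd6}, and your handling of the exponential factor in $K$ (via the Turing condition $1+r\gamma-p>0$) together with $\Psi\ge m_\Psi$ is correct.

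The gap is in the very last inequality. To replace $K\bar z^{\gamma}$ by $m_\Psi\bar z^{\gamma-r\gamma}$ inside the brace with the required sign you need $K\ge m_\Psi\bar z^{-r\gamma}$, equivalently $\big(\avint z^{r}\big)^{\gamma}\le\bar z^{\,r\gamma}$. You obtain this from Jensen for the \emph{concave} map $t\mapsto t^{r}$, explicitly invoking ``$0<r\le1$''. That hypothesis is nowhere in the lemma, and the whole of Section~\ref{tipf} runs under the standing assumption $1\le r\le p$ (it is among the hypotheses of Theorem~\ref{thm5} and is used verbatim in the first line of the proof of Lemma~\ref{nkl2}). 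For $r>1$ the map $t\mapsto t^{r}$ is convex and Jensen yields the \emph{opposite} inequality $\avint z^{r}\ge\bar z^{\,r}$, so your argument collapses precisely in the regime where the lemma is applied. The paper closes this step by citing ``H\"older's inequality, and since $1\le r\le p$'' rather than concave Jensen; whatever one thinks of how that line is phrased in the paper, your route imports an additional assumption $r\le1$ that is incompatible with the intended setting, so as written your proof does not establish \eqref{eqn:6.13h} in the case the paper actually needs.
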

\begin{proof}
It is readily checked that
$\mathcal{H}\left[R^{N-1} z_{R}\right]=0,$
while by straightforward calculations we derive
\bge\label{tbsd6}
\mathcal{H}\left[\vep R^{N} \frac{z^q}{\ol{z}^{\gamma+1}}\right]&&=\frac{2q(N-1)\vep R^{N-1}z^{q-1}}{\ol{z}^{\gamma+1}}z_{R}+\frac{q\vep R^N z^{p-1+q}}{\ol{z}^{\gamma+1}} K(\si)-\frac{(\gamma+1) \vep R^N z^q}{\ol{z}^{\gamma+2}}\,K(\si)\,\avint z^p\,dx\no\\
&&-\frac{2 q N\vep R^{N-1} z^{q-1}}{\ol{z}^{\gamma+1}}z_R-\frac{q(q-1)\vep R^N z^{q-2}}{\ol{z}^{\gamma+1}}z_R^2-\frac{p\vep R^N z^{p-1+q}}{\ol{z}^{\gamma+1}} K(\si)\no\\
&&\leq-\frac{2 q \vep z^{q-1}}{\ol{z}^{\gamma+1}}\vartheta+\frac{\vep R^N z^q}{\ol{z}^{2(\gamma+1)}}\left[2q\vep z^{q-1}-\frac{\Psi(\si)(\gamma+1) \ol{z}^\ga e^{(1+r\ga-p)\int^\si\Phi(s)\,ds}}{\left(\avint z^r\,dx\right)^\ga}\avint z^p\,dx\right.\notag \\
&& \left. \quad - \frac{\Psi(\si)(p-q) z^{p-1} \ol{z}^{\ga+1}}{\left(\avint z^r\,dx\right)^\ga} e^{(1+r\ga-p)\int^\si \Phi(s)\,ds} \right].
\ege
Then by virtue of the H\"{o}lder's inequality, and since $1\leq r\leq p,$ \eqref{tbsd6} entails the desired estimate \eqref{eqn:6.13h}.
\end{proof}

Next note that when $p>\frac{N}{N-2}$, there is $1<q<p$ such that $N>\frac{2p}{q-1}$ and thus the following quantities
\begin{equation}
A_1\equiv\sup_{0<\de<1}\frac{1}{\bar{u}_0^{\mu}} \avint u_0^p=\lambda^{\mu-p}\alpha_1\quad\mbox{and}\quad A_2\equiv\inf_{0<\de<1}\frac{1}{\bar{u}_0^{\mu}} \avint u_0^p=\lambda^{\mu-p}\alpha_2,
 \label{eqn:6.14}
\end{equation}
are finite due to \eqref{at1}.

An essential ingredient for the proof of Theorem  \ref{thm:6.1} is the following  key estimate of the $L^p-$norm of $z$ in terms of $A_1$ and $A_2.$
\begin{proposition}\label{lem4}
There exist $0<\delta_0<1$ and $0<\si_0\leq 1$ independent of any $0<\delta\leq \delta_0,$ such that the following estimate is satisfied
\begin{equation}\label{lue}
\frac{1}{2} A_2\ol{z}^{\mu}\leq  \avint z^p\,dx\leq 2 A_1 \ol{z}^{\mu},
\end{equation}
for any $0<\si<\min\{\si_0,\Si_{\de}\}.$
\end{proposition}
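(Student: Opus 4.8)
The plan is to derive a closed differential inequality for the ratio $\Theta(\si):=\avint z^p / \ol z^{\mu}$ and show it cannot leave the interval $[\tfrac12 A_2, 2A_1]$ on a short time interval $[0,\si_0]$, using that at $\si=0$ we have $A_2 \le \Theta(0)\le A_1$ by \eqref{eqn:6.14}. First I would compute $\frac{d}{d\si}\avint z^p$ by multiplying \eqref{tbsd3} by $p z^{p-1}$ and averaging: the diffusion term contributes $-p(p-1)\avint z^{p-2}|\grad z|^2\le 0$ (after integration by parts using the zero-flux condition \eqref{tbsd3a}), while the reaction term gives $pK(\si)\avint z^{2p-1}$. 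Combined with $\frac{d\ol z}{d\si}=K(\si)\avint z^p$ from \eqref{eqn:6.8h}, the quotient rule yields
\[
\frac{d\Theta}{d\si}= \frac{1}{\ol z^{\mu}}\Big(pK(\si)\avint z^{2p-1} - p(p-1)\avint z^{p-2}|\grad z|^2\Big) - \mu K(\si)\frac{\avint z^p}{\ol z^{\mu+1}}\avint z^p .
\]

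The key is to bound all the $z$-integrals back in terms of $\Theta$ and $\ol z$ so that the right-hand side becomes $K(\si)\ol z^{\,\text{(something)}}$ times a function of $\Theta$ alone that vanishes, or has a controllable sign, at the endpoints $\Theta=2A_1$ and $\Theta=\tfrac12 A_2$. For the upper endpoint one drops the negative gradient term; for $\avint z^{2p-1}$ I would use the pointwise bound $z(R,\si)\le R^{-N}\ol z(\si)$ from \eqref{eqn:6.10} of Lemma \ref{lem3} to trade one power of $z$ for $\ol z$ at the cost of an $R^{-N}$ weight that is integrable once $N>2p/(q-1)$-type conditions on the exponents are in force (this is exactly why $p>\tfrac{N}{N-2}$ is assumed), giving $\avint z^{2p-1}\le C\ol z\,\avint z^{p}\cdot(\text{harmless factor})$ — more precisely one interpolates so that $\avint z^{2p-1}\le C\ol z^{?}\,(\avint z^p)^{?}$ with exponents matching $\mu$. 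The structure is engineered (via the definition $\mu>1+r\gamma$ and \eqref{at1}) so that after dividing by $K(\si)\ol z^{2\mu-1}$ or similar, the inequality reads $\Theta'\le K(\si)\ol z^{m}\,\Theta(\,C_1 - C_2\Theta\,)$ type, and since $\ol z$ is bounded above on a short time interval (it is continuous, nondecreasing by \eqref{eqn:6.9}, and on $[0,1]$ the growth is controlled) and $K(\si)\le C$ by \eqref{eqn:6.8}, a Gronwall / continuity argument confines $\Theta$ near $\Theta(0)\le A_1$, hence below $2A_1$, for $\si\le\si_0$ with $\si_0$ depending only on these bounds, uniformly in $\delta$ because Lemma \ref{kkl1}(ii) makes $A_1,A_2$ $\delta$-independent.

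For the lower bound $\avint z^p\ge\tfrac12 A_2\ol z^{\mu}$ one runs the symmetric estimate: now keep (discard) the gradient term as needed and bound $\avint z^{2p-1}$ and $(\avint z^p)^2/\ol z$ from below, or more simply observe $\Theta'\ge -CK(\si)\ol z^m\Theta$ from the negativity of the gradient term and $\avint z^{2p-1}\ge 0$, which already gives $\Theta(\si)\ge\Theta(0)e^{-C\int_0^\si K\ol z^m}\ge \tfrac12 A_2$ once $\si_0$ is small enough — this direction is the easy one. The main obstacle is the upper bound: making the interpolation of $\avint z^{2p-1}$ precise so that the exponent of $\ol z$ that pops out matches $\mu$ exactly and the resulting coefficient is finite requires carefully invoking $p>\frac{N}{N-2}$ together with the existence of $1<q<p$ with $N>\frac{2p}{q-1}$, and tracking that the constant $C$ is independent of $\delta$ and $\si$. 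Once that single estimate is pinned down, choosing $\si_0\le 1$ small relative to $C$, $A_1$, $A_2$ and the a priori bound on $\ol z$ over $[0,1]$ closes the argument; note $\si_0$ can be taken independent of $\delta$ precisely because every constant entering it (namely $A_1,A_2$ via Lemma \ref{kkl1}, the bound on $K$ via \eqref{eqn:6.8}, and $\bar u_0$) is $\delta$-independent.
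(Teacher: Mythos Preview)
Your approach via a differential inequality for $\Theta(\si)=\avint z^p/\ol z^{\mu}$ is genuinely different from the paper's, and the key step you flag as ``the main obstacle'' is in fact a real gap that does not close with the tools you invoke. The problematic term is $\avint z^{2p-1}$. You propose to control it by the pointwise bound $z\le R^{-N}\ol z$ from \eqref{eqn:6.10}, writing $\avint z^{2p-1}\le C\ol z\,\avint z^{p}\cdot(\text{harmless factor})$. But trading powers of $z$ for $R^{-N}\ol z$ introduces a weight $R^{-N(p-1)}$ (or worse) against $z^p$, and this is \emph{not} integrable near the origin; the condition $N>\tfrac{2p}{q-1}$ concerns $R^{-2p/(q-1)}$, not $R^{-N(p-1)}$. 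Concretely, already at $\si=0$ with $z_0=\lambda\psi_\delta\sim \lambda R^{-a}$, $a=\tfrac{2}{p-1}$, one has $z_0^{2p-1}\sim R^{-a(2p-1)}$ and $a(2p-1)=ap+2>N$ is perfectly possible under the standing hypothesis $N>ap$ (e.g.\ $N=3$, $p=4$), so $\avint z_0^{2p-1}\to\infty$ as $\delta\downarrow 0$. Thus the constant in your differential inequality cannot be taken independent of $\delta$, and the resulting $\si_0$ would shrink to $0$. The ``easy'' lower bound is also not as written: the gradient term $-p(p-1)\avint z^{p-2}|\nabla z|^2$ is \emph{negative}, so for a lower bound on $\Theta'$ you must bound its magnitude from above, not simply invoke its sign; and this term has exactly the same singular scaling $R^{-a(2p-1)}$ at $\si=0$.

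The paper sidesteps $\avint z^{2p-1}$ entirely by a localisation-plus-continuity argument. One defines $\si_0(\delta)$ as the maximal time on which \eqref{lue} holds and works on $[0,\si_0(\delta))$. Near the origin the refined pointwise bound $z\lesssim R^{-2/(q-1)}\ol z^{(\gamma+1)/(q-1)}$ (obtained from Lemma~\ref{kkl2} via a maximum-principle argument on $\vartheta$, i.e.\ Lemma~\ref{nkl3}) makes $\int_{B_{R_0}}z^p$ a small fraction of $A_2\ol z^\mu$, using precisely $N>\tfrac{2p}{q-1}$. Away from the origin $z$ is uniformly bounded, so the rescaled function $\theta=z/\ol z^{\mu/p}$ enjoys uniform $L^\infty$ bounds together with its source terms, and parabolic (DeGiorgi--Nash--Moser) regularity gives time-continuity of $\int_{\Omega_0\setminus B_{R_0}}\theta^p$ with a modulus independent of $\delta$. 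Combining the two regions shows $\avint z^p/\ol z^\mu$ stays strictly inside $(\tfrac12 A_2,2A_1)$ up to some $\si_2$ independent of $\delta$, contradicting maximality of $\si_0(\delta)$ unless $\si_0(\delta)>\si_2$. The essential idea you are missing is this inner/outer decomposition: the spike at the origin must be handled by the sharp profile estimate \eqref{eqn:6.26}, not by the crude $R^{-N}\ol z$ bound, while global energy-type quantities like $\avint z^{2p-1}$ are avoided altogether.
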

The proof of Proposition \ref{lem4} requires some further auxiliary results provided below. Let us define $0<\si_0(\delta)<\Si_\delta$ to be the maximal time for which inequality (\ref{lue}) is valid in $0<\si<\si_0(\delta),$ then we have
\begin{equation}
\frac{1}{2}A_2\bar{z}^\mu\leq \avint z^p\leq 2A_1\bar{z}^\mu.
 \label{eqn:6.18}
\end{equation}
We only regard the case $\si_0(\delta)\leq 1,$ since otherwise there is nothing to prove. Then the following lemma holds true.

\begin{lemma}\label{nkl2}
There exists $0<\si_1<1$ such that
\begin{equation}
\bar{z}(\si)\leq 2\bar{u}_0, \quad 0<\si<\min\{ \si_1, \si_0(\delta)\},
 \label{eqn:6.18h}
\end{equation}
for any $0<\delta<1$.
 \label{lem:6.6}
\end{lemma}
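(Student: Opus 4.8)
The plan is to integrate the averaged equation \eqref{eqn:6.8h} using the a priori control of $\avint z^p$ furnished by the standing hypothesis \eqref{eqn:6.18}. Indeed, for $0<\si<\si_0(\delta)$ we have $\avint z^p\le 2A_1\bar z^{\mu}$, and since $0<K(\si)\le C$ by \eqref{eqn:6.8}, equation \eqref{eqn:6.8h} yields the scalar differential inequality
\[
\frac{d\bar z}{d\si}\le 2CA_1\,\bar z^{\mu},\qquad \bar z(0)=\bar u_0 .
\]
Because $\mu>1+r\gamma>1$, I would recast this as a linear inequality for $\bar z^{1-\mu}$: dividing by $\bar z^{\mu}>0$ and using $1-\mu<0$ gives $\frac{d}{d\si}\bar z^{1-\mu}\ge -2CA_1(\mu-1)$, hence $\bar z^{1-\mu}(\si)\ge \bar u_0^{1-\mu}-2CA_1(\mu-1)\si$ as long as the right-hand side stays positive. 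Since $x\mapsto x^{1-\mu}$ is strictly decreasing, the target bound $\bar z(\si)\le 2\bar u_0$ is equivalent to $\bar z^{1-\mu}(\si)\ge 2^{1-\mu}\bar u_0^{1-\mu}$, so it suffices to ensure $2CA_1(\mu-1)\si\le(1-2^{1-\mu})\bar u_0^{1-\mu}$; note $1-2^{1-\mu}>0$ precisely because $\mu>1$.

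The only genuine point is that the resulting threshold on $\si$ must be chosen uniformly in $\delta$. The quantity $A_1=\lambda^{\mu-p}\alpha_1$ is finite and $\delta$-independent by \eqref{at1}, while $\bar u_0=\lambda\bar\psi_\delta$ with, by Lemma \ref{kkl1}(ii) applied to $m=1$ (legitimate since $p>\tfrac{N}{N-2}$ forces $a=\tfrac{2}{p-1}<N$), $\bar\psi_\delta=\frac{N}{N-a}+O(\delta^{N-a})$; hence $\bar u_0$, and with it $\bar u_0^{1-\mu}$, stays between two positive constants independent of $0<\delta<1$. Finally the constant $C$ in \eqref{eqn:6.8} is itself $\delta$-independent, since it comes from \eqref{eqn:1.17} together with the elementary bound $\psi_\delta\ge\delta^{-a}\ge1$, which keeps the negative-order averages of $u_0=\lambda\psi_\delta$, hence the constant in Proposition \ref{eiq}, uniformly bounded. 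Consequently one may set
\[
\si_1:=\min\left\{\tfrac12,\ \frac{(1-2^{1-\mu})\,\inf_{0<\delta<1}\bar u_0^{1-\mu}}{2CA_1(\mu-1)}\right\}\in(0,1),
\]
which does not depend on $\delta$, and for $0<\si<\min\{\si_1,\si_0(\delta)\}$ the computation above gives $\bar z(\si)\le 2\bar u_0$, i.e. \eqref{eqn:6.18h}.

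There is no conceptual obstacle here: the power-type ODE comparison is routine, and the whole proof is a short Bernoulli estimate. The one thing that needs care is checking that each constant entering the definition of $\si_1$ (namely $A_1$, $\bar u_0$, and the bound $C$ for $K$) is independent of the spiky parameter $\delta$, and this is exactly what Lemma \ref{kkl1}, \eqref{at1} and \eqref{eqn:1.17} provide.
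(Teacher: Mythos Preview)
Your proof is correct and follows the same Bernoulli-type ODE comparison as the paper: integrate \eqref{eqn:6.8h} after bounding the right-hand side by a power of $\bar z$, and read off a $\delta$-independent threshold time. The only difference is how $K(\si)$ is controlled. You invoke the uniform bound $K\le C$ from \eqref{eqn:6.8}, obtaining $\frac{d\bar z}{d\si}\le 2CA_1\bar z^{\mu}$; the paper instead uses the explicit formula \eqref{nnt} together with $r\ge1$ (so $\avint z^r\ge\bar z^{r}$) and $\si\le1$ to get the sharper inequality $\frac{d\bar z}{d\si}\le C_1\bar z^{\mu-r\gamma}$ with $C_1=2A_1M_\Psi e^{(1+r\gamma-p)M_\Phi}$, then integrates with exponent $\mu-r\gamma>1$ from \eqref{eqn:6.15}. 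Your route is slightly quicker but leans on the lower bound \eqref{eqn:1.17} and requires checking (as you do) that the constant there is $\delta$-uniform; the paper's route is more self-contained and makes the $\delta$-independence of $\si_1$ immediate since $C_1$ visibly does not involve $\delta$. Either way the estimate \eqref{eqn:6.18h} follows.
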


\begin{proof}
Since $r\geq 1$ and $\si_0(\delta)\leq 1$, then by virtue of (\ref{nnt}) and (\ref{eqn:6.8h})
$\frac{d\bar{z}}{d\si}\leq 2A_1 M_{\Psi} e^{(1+r\gamma-p)M_{\Phi}}\bar{z}^{\mu-r\gamma}$, for $0<\si<\si_0(\delta)$,
recalling that $M_{\Phi}=\sup_{\si\in(0,\Si_{\de})}\Phi(\si)<+\infty$ and $M_{\Psi}=\sup_{\si\in(0,\Si_{\de})}\Psi(\si)<+\infty.$

Setting $C_1=2A_1 M_{\Psi} e^{(1+r\gamma-p)M_{\Phi}}$ and taking into account (\ref{eqn:6.15}) we then derive
$\ol{z}(\si)\leq \left[\bar{u}_0^{1+r\ga-\mu}-C_1(\mu-r\ga-1)\si\right]^{-\frac{1}{\mu-r\ga-1}}. $
 Accordingly, (\ref{eqn:6.18h}) holds for any $0<\si<\min\{ \si_1, \si_0(\delta)\}$ where $\si_1$ is independent of any $0<\delta<1$ and estimated as
$
\si_1\leq\min\left\{\frac{1-2^{1+r\ga-\mu}}{C_1(\mu-r\ga-1)}\ol{u}_0^{1+r\ga-\mu},1\right\}.
$
\end{proof}
Another fruitful estimate is provided by the next lemma.
\begin{lemma}\label{nkl3}
There exist $0<\delta_0<1$ and $0<R_0<\frac{3}{4}$ such that for any $0<\delta\leq \delta_0$  the following estimate is valid
\begin{equation}
\frac{1}{\vert\Omega\vert}\int_{B_{R_0}(0)}z^p\leq \frac{A_2}{8}\bar{z}^\mu, \quad\mbox{for}\quad 0<\si<\min\{ \si_1, \si_0(\delta)\},
 \label{eqn:6.19}
\end{equation}
where $B_{R_0}(0)=\{x\in \R^N \mid \vert x\vert<R_0\}.$
\end{lemma}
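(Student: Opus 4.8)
The claim is an upper bound on the mass of $z^p$ concentrated in a small ball $B_{R_0}(0)$, valid once $\delta$ is small and on the time interval where the two-sided comparison \eqref{eqn:6.18} holds. The natural strategy is to exploit the pointwise decay estimate \eqref{eqn:6.10} from Lemma \ref{lem3}, namely $R^N z(R,\si)\le \bar z(\si)$, which says that $z$ cannot be too large away from the origin, and to combine it with the monotonicity $z_R\le 0$ (also from Lemma \ref{lem3}) so that the worst case on $B_{R_0}$ is controlled by $z(R_0,\si)$. First I would note that since $z$ is radially decreasing, $z(R,\si)\le z(R_0,\si)$ for $R_0\le R$, but on $B_{R_0}$ itself $z$ can be as large as $z(0,\si)$, so a cruder device is needed: I would instead integrate $z^p$ over $B_{R_0}$ directly, splitting off the behaviour near the origin versus near $R_0$, or better, use \eqref{eqn:6.10} in the form $z(R,\si)\le R^{-N}\bar z(\si)$ only for $R$ bounded below and estimate the contribution of the very small ball by the $L^p$-bound on the initial data propagated in time.

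More precisely, the plan is as follows. Since $p>\tfrac{N}{N-2}$, we have $p>1$ and the exponent $a=\tfrac{2}{p-1}$ satisfies $pa<N$ (indeed $pa<N \iff p-1>\tfrac{2p}{N}$, which follows from $p>\tfrac{N}{N-2}$), so Lemma \ref{kkl1}(ii) applies with $m=p$, giving $\avint \psi_\delta^p = \tfrac{N}{N-pa}+O(\delta^{N-pa})$; hence $\avint u_0^p=\lambda^p\big(\tfrac{N}{N-pa}+o(1)\big)$ as $\delta\downarrow 0$. The key point is that the part of this integral coming from $\{R<R_0\}$, for fixed $R_0$, can be made an arbitrarily small fraction of the whole by choosing $R_0$ small, because $\int_0^{R_0} \psi_\delta^p R^{N-1}\,dR$ for the profile $R^{-a}$ behaves like $R_0^{N-pa}$ up to the $\delta$-dependent plateau correction, which itself is controlled once $\delta\le \delta_0\le R_0$. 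Concretely: choose $R_0$ small enough that $\tfrac{1}{|\Omega|}\int_{B_{R_0}(0)}u_0^p \le \tfrac{A_2}{16}\bar u_0^\mu$ at time zero (possible since $A_2\bar u_0^\mu=\inf_\delta \avint u_0^p>0$ while the $B_{R_0}$-contribution tends to $0$ with $R_0$ uniformly in small $\delta$), then shrink $\delta_0$ if necessary so this holds for all $0<\delta\le\delta_0$.

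Next I would propagate this in time on $(0,\min\{\si_1,\si_0(\delta)\})$. Write $\tfrac{1}{|\Omega|}\int_{B_{R_0}}z^p$ and differentiate, or more simply bound $z$ on $B_{R_0}$ at time $\si$ using \eqref{eqn:6.10}: for $R\in(R_0,\tfrac34)$ say, $z(R,\si)\le R_0^{-N}\bar z(\si)\le R_0^{-N}\cdot 2\bar u_0$ by Lemma \ref{nkl2}, and for $R<R_0$ use monotonicity together with the $L^p$-in-space bound propagated from the ODE $\tfrac{d\bar z}{d\si}=K(\si)\avint z^p$ with $K$ bounded by \eqref{eqn:6.8} and $\avint z^p\le 2A_1\bar z^\mu\le 2A_1(2\bar u_0)^\mu$; integrating the resulting bound on $\tfrac{d}{d\si}\avint_{B_{R_0}}z^p$ over the short interval $\si\le \si_1\le 1$ keeps the $B_{R_0}$-mass below $\tfrac{A_2}{8}\bar z^\mu$ provided $\si_1$ (hence $\si_0$) is taken small enough. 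Shrinking $\si_0$ and $\delta_0$ a last time absorbs the constants.

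The main obstacle I anticipate is the estimate near the origin: \eqref{eqn:6.10} degenerates as $R\to 0$, so one cannot simply plug it in on all of $B_{R_0}$. The resolution is exactly the structure of the spiky initial datum in \eqref{id}: near $R=0$ the profile $\psi_\delta$ is a bounded parabola of height $\sim\delta^{-a}$ over a region of radius $\delta$, whose $L^p$-mass is $O(\delta^{N-pa})=o(1)$; combined with the fact that $z_\si\ge 0$ pointwise only increases $z$, one must instead control the time growth via the global ODE for $\bar z$ rather than pointwise near the origin. So the real work is checking that the $B_{R_0}$-contribution at time $\si$ is still dominated by $\tfrac{A_2}{8}\bar z(\si)^\mu$ — this uses $\bar z(\si)\ge \bar u_0$ from \eqref{eqn:6.9} in the denominator and the two-sided bound \eqref{eqn:6.18} to convert the time-integrated growth of $\int_{B_{R_0}}z^p$ into a controlled multiple of $\bar z^\mu$ — and then fixing the order of quantifiers: first $R_0$, then $\delta_0$, then $\si_0$.
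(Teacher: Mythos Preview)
Your approach has a genuine gap at the ``propagate in time'' step, and the gap is not a matter of absorbing constants.

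The difficulty is this: the only pointwise bound you invoke is \eqref{eqn:6.10}, namely $z(R,\si)\le R^{-N}\bar z(\si)$. Raising to the $p$-th power gives $z^p\le R^{-Np}\bar z^p$, and since $p>1$ this is \emph{not} integrable over $B_{R_0}(0)$; the bound is useless on the inner ball for every $\si>0$, not just at the origin. Your proposed fix---control $\frac{d}{d\si}\int_{B_{R_0}}z^p$ and use short time---does not close either: differentiating brings in $pK(\si)\int_{B_{R_0}} z^{2p-1}$, which is a higher power than $p$ and cannot be bounded uniformly in $\delta$ (indeed $\Vert z_0\Vert_\infty\sim \lambda\delta^{-a}\to\infty$, and one checks that $\int_{B_{R_0}} z_0^{2p-1}$ already diverges as $\delta\downarrow 0$ unless an extra hypothesis on $p,N$ is imposed). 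So no ``short time plus uniform continuity'' argument is available; the modulus of continuity in $\si$ of $\int_{B_{R_0}}z^p$ is not uniform in $\delta$.

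What the paper does instead is derive a \emph{sharper} pointwise decay. Using the auxiliary quantity $\vartheta=R^{N-1}z_R+\varepsilon R^N z^q/\bar z^{\gamma+1}$ from Lemma~\ref{kkl2} with $1<q<p$ chosen so that $N>\tfrac{2p}{q-1}$, one shows via the maximum principle (checking the sign of $\vartheta$ at $\si=0$ and at $R=\tfrac34$, and using \eqref{eqn:6.13h} together with the two-sided bound \eqref{eqn:6.18}) that $\vartheta\le 0$ on $(0,\tfrac34)\times(0,\min\{\si_1,\si_0(\delta)\})$. Integrating $z_R\le -\varepsilon R z^q/\bar z^{\gamma+1}$ yields $z(R,\si)\lesssim R^{-2/(q-1)}$, and now $R^{-2p/(q-1)}$ \emph{is} integrable near $0$ by the choice of $q$, giving \eqref{eqn:6.19} for $R_0$ small. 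The point is that the exponent $-2/(q-1)$ is strictly better than the $-N$ coming from \eqref{eqn:6.10}; obtaining it requires the $\vartheta$-maximum-principle machinery, which your plan does not invoke.
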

\begin{proof}
By virtue of (\ref{eqn:6.9}) and (\ref{eqn:6.18h}) it follows that
\begin{equation}
\bar{u}_0\leq \bar{z}(\si)\leq 2\bar{u}_0, \quad\mbox{for}\quad 0<\si<\min\{\si_1, \si_0(\delta)\}.
 \label{eqn:6.20}
\end{equation}
Furthermore,  we note that the growth of $\avint z^p$ is controlled by the estimate (\ref{lue}) for $0<\min\{ \si_1, \si_0(\delta)\}$ and since $p>q$ then Young's inequality ensures that the second term of the right-hand side in (\ref{eqn:6.13h}) is negative for $0<\si<\min\{ \si_1, \si_0(\delta)\}$, uniformly in $0<\delta<1$, provided that $0<\varepsilon\leq \varepsilon_0$ for some $0<\varepsilon_0\ll 1.$ Therefore
\begin{equation}
\mathcal{H}[\vartheta]\leq
-\frac{2q\vep z^{q-1}}{\bar{z}^{\gamma+1}}
\vartheta \quad\mbox{in}\quad (0,1)\times (0,\min\{ \si_1, \si_0(\delta)\}).
 \label{eqn:6.21}
\end{equation}
Moreover (\ref{eqn:6.10}) and (\ref{eqn:6.20}) imply
\begin{eqnarray*}
\vartheta(R,\si) & = & R^{N-1}z_R+\varepsilon\cdot\frac{R^Nz^q}{\bar{z}^{\gamma+1}} \leq R^{N-1}z_R+\varepsilon\cdot R^{N(1-q)}\bar{z}^{q-\gamma-1} \\
& \leq & R^{N-1}z_R+C\cdot\varepsilon R^{N(1-q)}\quad\mbox{in}\quad (0,1)\times (0,\min\{ \si_1, \si_0(\delta)\}),
\end{eqnarray*}
which, for $0<\varepsilon\leq \varepsilon_0,$ entails
\begin{equation}
\vartheta\left(\frac{3}{4},\si\right)<0, \quad 0<\si<\min\{ \si_1, \si_0(\delta)\},
 \label{eqn:6.22}
\end{equation}
owing to (\ref{eqn:6.11}) and provided that $0<\varepsilon_0\ll 1.$ Additionally \eqref{ik1} for $t=0$ gives
\begin{equation}
\vartheta(R,0) = R^{N-1}\left(\lambda \psi_{\delta}'(R)+\varepsilon\lambda^{q-\gamma-1}R\cdot\frac{\psi_\delta^q}{\bar{\psi}_\delta^{\gamma+1}}\right).
 \label{eqn:6.23}
\end{equation}
For $0\leq R<\delta$ and $\varepsilon$ small enough and independent of $0<\de<\de_0,$ then the right-hand side of (\ref{eqn:6.23}) can be estimated as
\bgee
R^{N}\lambda\left( -a\delta^{-a-2}+\varepsilon\lambda^{q-\gamma-2}\cdot \frac{\psi_\delta^q}{\bar{\psi}_\delta^{\gamma+1}} \right)\lesssim R^{N}\lambda\left( -a\delta^{-a-2}+\varepsilon\lambda^{q-\gamma-2}\cdot \delta^{-aq} \right)\lesssim 0,
\egee
since by virtue of (\ref{id}) and (\ref{ine}) and for $m=1,$ there holds
$\displaystyle\frac{\psi_\delta^q}{\bar{\psi}_\delta^{\gamma+1}}\lesssim \delta^{-aq},\; \delta\downarrow 0,$ uniformly in $0\leq R<\delta,$
taking also into account that $a+2=ap>ak.$

On the other hand, for $\delta\leq R\leq 1$ and by using (\ref{ine}) for $m=1$ we take
\begin{equation}
\vartheta(R,0)=R^N\lambda\left(-a R^{-a-1}+\varepsilon\lambda^{q-\gamma-1}\frac{R^{1-aq}}{\bar{\psi}_R^{\gamma+1}}\right),
 \label{eqn:6.24}
\end{equation}
which, since $a+2=ap>aq$ implies $-a-1<-aq+1$, finally yields
$ \vartheta(R,0)<0, \quad \de\leq R\leq \frac{3}{4},$
for any $0<\delta\leq \delta_0$ and $0<\varepsilon\leq \varepsilon_0$, provided $\varepsilon_0$ is chosen sufficiently small.
Accordingly, we derive
\begin{equation}
\vartheta(R,0)<0, \quad 0\leq R\leq \frac{3}{4},
 \label{eqn:6.25}
\end{equation}
for any $0<\delta\leq \delta_0$ and $0<\varepsilon\leq \varepsilon_0$, provided $0<\varepsilon_0\ll 1.$

In conjunction of (\ref{eqn:6.21}), (\ref{eqn:6.22}) and (\ref{eqn:6.25}) we deduce
$ \vartheta(R,\si)=R^{N-1}z_R+\varepsilon\cdot\frac{R^Nz^q}{\bar{z}^{\gamma+1}}\leq 0$ in $(0,\frac{3}{4})\times (0,\min\{ \si_1, \si_0(\delta)\})$,  and finally
\begin{equation}
z(R,\si)\leq \left( \frac{\varepsilon}{2}(q-1)\right)^{-\frac{1}{q-1}}\cdot R^{-\frac{2}{q-1}}\cdot\bar{z}^{\frac{\gamma-1}{q-1}}(\si) \quad \mbox{in $(0,\frac{3}{4})\times (0,\min\{ \si_1, \si_0(\delta)\})$}.
 \label{eqn:6.26}
\end{equation}
Note that owing to  $N>\frac{2p}{q-1}$ there holds $-\frac{2}{q-1}\cdot p+N-1>-1$ and thus (\ref{eqn:6.19}) is valid for some $0<R_0<\frac{3}{4}.$
\end{proof}
\begin{remark}\label{nkl1a}
Estimate \eqref{eqn:6.26} entails that $z(R,\si)$ can only blow-up  in the origin $R=0;$ that is, only a single-point blow-up is feasible.
\end{remark}
Next we prove the key estimate \eqref{lue} using essentially Lemmas \ref{nkl2} and \ref{nkl3}.

\begin{proof}[Proof of Proposition \ref{lem4}]
By virtue of (\ref{eqn:6.15}) and since $\frac{p-1}{r}<\delta$, there holds that $\ell=\frac{\mu}{p}>1.$ We can easily check, \cite{KS16,KS18}, that $\theta=\displaystyle{\frac{z}{\ol{z}^{\ell}}}$ satisfies
\[\theta_{\si}=\Delta_R \theta+ \Psi(\si)e^{(r\ga+1-p)\int^\si \Phi(s)\,ds}\left[\frac{z^p}{\ol{z}^{\ell}\left(\avint z^r\right)^{\ga}}-\frac{\ell z\avint z^p}{\ol{z}^{\ell+1}\left(\avint z^r\right)^{\ga}} \right], \]
in $\Omega_0\times(0, \min\{ \si_0, \Si_\delta\})$, with $\frac{\partial\theta}{\partial\nu}=0$, on $\partial\Omega_0\times(0, \min\{ \si_0, \Si_\delta\})$, and $\theta(x,0)=\frac{z(x,0)}{\bar{z}_0^{\ell}}$, on $\Omega_0$.
In conjunction with (\ref{eqn:1.17}), (\ref{eqn:6.9}), (\ref{eqn:6.10}), (\ref{eqn:6.18}), and (\ref{eqn:6.18h}) we deduce that
\bge\label{kik1}
\left\Vert \theta,\ \frac{z^p}{\ol{z}^{\ell}\left(\avint z^r\right)^{\ga}}, \ \frac{\ell z\avint z^p}{\ol{z}^{\ell+1}\left(\avint z^r\right)^{\ga}}\right\Vert_{L^\infty((\Omega_0\setminus B_{R_0}(0))\times \min\{ \si_1, \si_0(\delta)\})} <+\infty,
\ege
uniformly in $0<\delta\leq \delta_0,$ and using the fact that $\Psi(\si)$ and $\Psi(\si)$ are both bounded and positive.
Estimate \eqref{kik1} by virtue of the standard parabolic regularity, see DeGiorgi-Nash-Moser estimates in \cite[pages 144-145]{Lie96}, entails the existence of $0<\si_2\leq \si_1$ independent of $0<\de\leq \de_0$:
$ \sup_{0< \si<\min\{\si_2, \si_0(\delta)\}}\left\Vert \theta^p(\cdot,\si)-\theta^p(\cdot,0)\right\Vert_{L^1(\Omega_0\setminus B_{R_0}(0)}\leq \frac{A_2}{8}\vert\Omega_0\vert, $
which yields
\begin{equation}
\left\vert \frac{1}{\vert\Omega_0\vert}\int_{\Omega_0\setminus B_{R_0}(0)}\frac{z^p}{\bar{z}^\mu}-\frac{1}{\vert\Omega_0\vert}\int_{\Omega_0\setminus B_{R_0}(0)}\frac{z_0^p}{\bar{z}_0^\mu} \right\vert \leq \frac{A_2}{8},
 \label{eqn:6.28}
\end{equation}
with $0<\si<\min\{ \si_2, \si_0(\delta)$ for any $0<\delta\leq \delta_0$. Combining  (\ref{eqn:6.19}) and (\ref{eqn:6.28}) we deduce
$ \left\vert \avint \frac{z^p}{\ol{z}^\mu}-\avint \frac{z_0^p}{\ol{z}_0^\mu}\right\vert\leq \frac{3 A_2}{8}, \;\mbox{for}\; 0<\si<\min\{ \si_2, \si_0(\delta)\}\;\mbox{and}\; 0<\delta\leq \delta_0,$
and thus we finally obtain
\begin{equation}\label{tbsd13}
\frac{5A_2}{8}\leq \avint \frac{z^p}{\ol{z}^{\mu}} \leq \frac{11 A_1}{8}, \quad 0<\si<\min\{ \si_2, \si_0(\de)\}, \ 0<\delta\leq \delta_0,
\end{equation}
taking also into consideration
$ A_2\leq \avint\frac{z_0^p}{\bar{z}_0^\mu}\leq A_1. $
Consequently, if we consider $\si_0(\delta)\leq \si_2$ then it follows
$ \frac{1}{2}A_2\bar{z}^\mu<\frac{5}{8}A_2\bar{z}^\mu\leq \avint z^p\leq \frac{11}{8}A_1\bar{z}^\mu<2A_1\bar{z}^\mu$, for $ 0<\si<\si_0(\delta)$,
and thus a continuity argument implies that
$\frac{1}{2}A_2\bar{z}^\mu\leq \avint z^p\leq 2A_1\bar{z}^\mu$,  with $0<\si<\si_0(\delta)+\eta$,
for some $\eta>0,$ which contradicts the definition of $\si_0(\delta)$. Accordingly, we derive that $\si_2<\si_0(\delta)$ for any $0<\delta\leq \delta_0$, and the proof of Proposition \ref{lem4} is complete for $\si_0=\si_2.$
\end{proof}
Next we present the Turing instability result intimated at the beginning of the section, which in particular is exhibited in the form of diffusion-driven blow-up.
\begin{theorem}\label{thm5}
Consider $N\geq 3,\;1\leq r\leq p$, $p>\frac{N}{N-2}, \frac{2}{N}<\frac{p-1}{r}<\gamma$ and $\gamma>1.$ Assume that both $\Phi(\si)$ and $\Psi(\si)$ are positive and bounded.  Then there exists $\lambda_0>0$ such that for any $0<\lambda\leq \lambda_0$ there exists $0<\delta_0=\delta_0(\lambda)<1$ and any solution of problem (\ref{rnstregm1})-(\ref{rnstregm3}) with initial data of the form (\ref{eqn:6.1}) and $0<\delta\leq \delta_0$ blows up in finite time.
 \label{thm:6.1}
\end{theorem}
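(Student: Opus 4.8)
The plan is to turn the two–sided control of $\avint z^{p}\,dx$ provided by Proposition~\ref{lem4} into a genuine finite–time blow–up of the spatial average $\bar z(\si)=\avint z\,dx$ of the rescaled unknown $z=e^{\int^{\si}\Phi(s)\,ds}u$ solving \eqref{tbsd3}--\eqref{tbsd3b}, via the averaged identity \eqref{eqn:6.8h}, $\frac{d\bar z}{d\si}=K(\si)\avint z^{p}\,dx$. Since $\|u(\cdot,\si)\|_{\infty}\ge\bar u(\si)=e^{-\int^{\si}\Phi(s)\,ds}\bar z(\si)$ and the exponential weight is bounded above and below on any finite time interval, blow–up of $\bar z$ forces blow–up of $u$; and because $\Phi,\Psi$ are positive and bounded, quenching is excluded (Remark~\ref{ts3}), so it suffices to prove that the maximal existence time $\Si_\de$ of the solution with datum \eqref{eqn:6.1} is finite. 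All the estimates below are used on $(0,\min\{\si_{0},\Si_\de\})$, which — as $\si_{0}=\si_{2}\le\si_{1}<\si_{0}(\de)$ — is contained in the domains of validity of \eqref{lue}, of Lemma~\ref{nkl2}, of \eqref{eqn:6.26} and of \eqref{kik1}.

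First I would bound $\avint z^{r}\,dx$ from above, keeping track of its $\lambda$–dependence. By Lemma~\ref{nkl2} and \eqref{eqn:6.9} we have $\bar u_{0}\le\bar z(\si)\le 2\bar u_{0}$, and $\bar u_{0}=\lambda\bar\psi_\de$ with $\bar\psi_\de$ bounded above and below uniformly in $\de$ by Lemma~\ref{kkl1}(ii); hence $\bar z(\si)$ is comparable to $\lambda$. Splitting $\avint z^{r}\,dx$ at $R_{0}$: on $B_{R_{0}}(0)$ the single–point estimate \eqref{eqn:6.26} applies (since $R_{0}<\tfrac34$), and as $N>\tfrac{2p}{q-1}\ge\tfrac{2r}{q-1}$ the radial factor $R^{-2r/(q-1)}$ is integrable near the origin, so $\frac1{|\Om_{0}|}\int_{B_{R_{0}}}z^{r}\,dx\le C\,\bar z^{\,r(\ga-1)/(q-1)}$; on the annulus $\Om_{0}\setminus B_{R_{0}}$ the uniform bound \eqref{kik1} on $\theta=z/\bar z^{\mu/p}$ gives $\frac1{|\Om_{0}|}\int_{\Om_{0}\setminus B_{R_{0}}}z^{r}\,dx\le C\,\bar z^{\,r\mu/p}$. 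As $\ga>1$ and $\mu>1+r\ga>p$ (by \eqref{eqn:6.15} and $\tfrac{p-1}{r}<\ga$), both exponents of $\bar z$ are positive, so $\avint z^{r}\,dx\le C\lambda^{\nu_{0}}$ with $\nu_{0}=\min\{r(\ga-1)/(q-1),\,r\mu/p\}>0$, uniformly in $\de$; by \eqref{nnt} and the positivity and boundedness of $\Phi,\Psi$ this forces $K(\si)\ge c\,\lambda^{-\ga\nu_{0}}$.

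Inserting this together with the lower bound $\avint z^{p}\,dx\ge\tfrac12A_{2}\bar z^{\mu}$ from \eqref{lue} — recalling from \eqref{eqn:6.14} that $A_{2}$ is of order $\lambda^{\,p-\mu}$, hence \emph{large} since $\mu>p$ — into \eqref{eqn:6.8h} yields the Bernoulli–type inequality
\[
\frac{d\bar z}{d\si}=K(\si)\avint z^{p}\,dx\ \ge\ c_{1}\,\lambda^{\,p-\mu-\ga\nu_{0}}\,\bar z^{\mu},\qquad 0<\si<\min\{\si_{0},\Si_\de\},
\]
with $c_{1}>0$ independent of $\lambda,\de$ and $\mu>1$. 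Integrating, $\bar z$ becomes infinite not later than $T^{\ast}=\big[(\mu-1)c_{1}\lambda^{\,p-\mu-\ga\nu_{0}}\big]^{-1}\bar u_{0}^{\,1-\mu}\le C\lambda^{\,1-p+\ga\nu_{0}}$ (the power of $\mu$ cancels, and $\bar u_{0}^{\,1-\mu}\le C\lambda^{\,1-\mu}$ uniformly in $\de$). It remains to guarantee $T^{\ast}<\si_{0}$. The free parameters $q\in(1+\tfrac{2p}{N},p)$ — a nonempty range precisely because $p>\tfrac{N}{N-2}$, and the one already in play in Lemma~\ref{kkl2} and Proposition~\ref{lem4} — and $\mu>1+r\ga$ are chosen, following the scheme of \cite{KS16,KS18}, so that the $\lambda$–exponent $1-p+\ga\nu_{0}$ of $T^{\ast}$ exceeds that of $\si_{0}$ (which, by the proof of Lemma~\ref{nkl2}, is of order $\lambda^{\,1+r\ga-p}$); this reduces to the requirement $\nu_{0}>r$, i.e.\ to being able to take $q$ in addition $<\ga$, and it is here that the quantitative hypothesis $\tfrac2N<\tfrac{p-1}{r}$ (together with $\ga>1$) is used. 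With $q,\mu$ so fixed, choose $\lambda_{0}>0$ small enough that, besides the smallness required by Lemma~\ref{lem:6.2}, one has $T^{\ast}<\si_{0}$ for all $0<\lambda\le\lambda_{0}$; then, with $0<\de_{0}=\de_{0}(\lambda)<1$ as in Proposition~\ref{lem4}, for every $0<\de\le\de_{0}$ the above inequality holds on $(0,\min\{\si_{0},\Si_\de\})$ and forces $\bar z$ to become infinite at some time $\le T^{\ast}<\si_{0}$. Consequently $\Si_\de\le T^{\ast}<\si_{0}<\infty$; the solution of \eqref{rnstregm1}--\eqref{rnstregm3} cannot be continued past $\Si_\de$, and, quenching being excluded, $\|u(\cdot,\si)\|_{\infty}\to\infty$ as $\si\to\Si_\de$, i.e.\ finite–time (diffusion–driven) blow–up occurs.

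The delicate point, and the main obstacle, is exactly this last bookkeeping of competing powers of $\lambda$: the coefficient $A_{2}$ in \eqref{lue} is large (order $\lambda^{p-\mu}$, $\mu>p$), the localisation time $\si_{0}$ of Proposition~\ref{lem4} is small (order $\lambda^{1+r\ga-p}$), and one must verify that the gain $K(\si)\gtrsim\lambda^{-\ga\nu_{0}}$ — which rests on the sharp single–point–blow–up estimate \eqref{eqn:6.26}, and would fail with a merely Hölder bound for $\avint z^{r}\,dx$ — more than compensates, so that the comparison ODE still blows up before $\si_{0}$; this compensation is what the condition $\tfrac2N<\tfrac{p-1}{r}$ guarantees, the remaining ingredients being routine consequences of Proposition~\ref{lem4} and Lemmas~\ref{lem3},~\ref{nkl2},~\ref{nkl3}.
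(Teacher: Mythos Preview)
Your approach is genuinely different from the paper's, and it has a real gap in the $\lambda$--bookkeeping that you yourself flag as ``the delicate point''.

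\medskip
\textbf{What the paper does.} The paper does \emph{not} blow up $\bar z$ via an ODE. It uses Proposition~\ref{lem4} only to extract the lower bound $K(\si)\geq D:=m_{\Psi}\,d\,\lambda^{-r\gamma}$ (via H\"older $\avint z^{r}\le(\avint z^{p})^{r/p}$, the \emph{upper} half of \eqref{lue}, and $\bar z\le 2\bar u_{0}$). It then compares $z$ with the solution $\tilde z$ of the \emph{local} problem $\tilde z_{\si}=\Delta\tilde z+D\tilde z^{p}$, and the Hu--Yin subsolution inequality of Lemma~\ref{lem:6.2}, namely $\Delta u_{0}+Du_{0}^{p}\ge 2u_{0}^{p}$, yields by the maximum principle $\tilde z_{\si}\ge\tilde z^{p}$ pointwise. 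Evaluating at the origin gives a blow--up time bounded by $\frac{1}{p-1}\big(\delta^{a}/(\lambda(1+\tfrac a2))\big)^{p-1}$, which for \emph{fixed} $\lambda$ tends to $0$ as $\delta\downarrow 0$. Hence for $\delta$ small this time is $<\si_{0}$, forcing $\Si_\de<\si_{0}<\infty$. The smallness is in $\delta$, not in $\lambda$; $\si_{0}$ need only be positive.

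\medskip
\textbf{Where your argument breaks.} Two independent problems. (i) You need a \emph{lower} bound $\si_{0}\gtrsim\lambda^{1+r\gamma-p}$; Lemma~\ref{nkl2} gives only an \emph{upper} bound of that order on $\si_{1}$, and $\si_{0}=\si_{2}\le\si_{1}$ is determined by a DeGiorgi--Nash--Moser continuity argument whose $\lambda$--dependence is not quantified anywhere (and feeds on quantities like $A_{2}$, $\varepsilon_{0}$ that themselves depend on $\lambda$). Without that lower bound you cannot conclude $T^{\ast}<\si_{0}$. (ii) Your reduction ``$\nu_{0}>r$ $\Leftrightarrow$ one may take $q<\gamma$'' is incompatible with the structural constraint $q>1+\tfrac{2p}{N}$ needed for \eqref{eqn:6.26}. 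For instance with $N=3$, $p=4$, $r=2$, $\gamma=2$ all hypotheses of the theorem hold, yet $1+\tfrac{2p}{N}=\tfrac{11}{3}>2=\gamma$, so no admissible $q$ exists. (Even with the corrected exponent $(\gamma+1)/(q-1)$ in \eqref{eqn:6.26} the constraint becomes $q<\gamma+2$, which still fails e.g.\ for $N=3$, $p=r=5$, $\gamma=1.5$.) So the advertised compensation ``guaranteed by $\tfrac2N<\tfrac{p-1}{r}$'' is not actually in force.

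\medskip
In short, the route via the averaged ODE cannot be closed with the estimates at hand; the paper's pointwise comparison with the local Fujita equation---which hinges on Lemma~\ref{lem:6.2}, an ingredient you never invoke---is what makes the blow--up time small in $\delta$ and sidesteps all the $\lambda$--accounting.
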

\begin{proof}
First note that $\si_0\leq \si_1$ in (\ref{eqn:6.18h}), then from (\ref{eqn:6.7}) and (\ref{eqn:6.14}) we have
\begin{eqnarray}
\quad K(\si) \geq
\frac{m_{\Psi}}{\left( \avint z^p\right)^{\frac{r\gamma}{p}}}\geq
 m_{\Psi}\left( \frac{1}{2\alpha_1}\right)^{\frac{r\gamma}{p}}\cdot\left( \frac{1}{2\bar{\psi}_\delta}\right)^{\frac{r\gamma}{p}\mu}\lambda^{-r\gamma}\geq m_{\Psi}d\lambda^{-r\gamma}\equiv D,
 \label{eqn:6.33}
\end{eqnarray}
for $0<\si<\min\{ \si_0, \Si_\delta\}.$ Note also that for $0<\lambda\leq \lambda_0(d)$, then inequality (\ref{eqn:6.4}) entails
\begin{equation}
\Delta u_0+Du_0^p\geq 2u_0^p
 \label{eqn:6.34}
\end{equation}
for any $0<\delta\leq \delta_0$. The comparison principle in conjunction with (\ref{eqn:6.33}) and (\ref{eqn:6.34}) then yields
\begin{equation}
z\geq \tilde{z} \quad\mbox{in}\quad Q_0\equiv\Omega_0\times (0, \min\{ \si_0, \Si_\delta\}),
 \label{eqn:6.35}
\end{equation}
where $\tilde z=\tilde z(x,t)$ solves the following partial differential equation
\bge
&&\tilde{z}_{\si}=\Delta \tilde{z}+D\tilde{z}^p, \quad\mbox{in}\quad Q_0,\label{eqn:6.36}\\
&&\frac{\partial \tilde{z}}{\partial\nu}=0,\quad\mbox{on}\quad\partial\Omega_0\times(0, \min\{ \si_0, \Si_\delta\}),\\
&&\tilde{z}(|x|,\si)=u_0(\vert x\vert)\quad\mbox{in}\quad \Om_0.
 \label{eqn:6.36}
\ege
Setting $h(x,\si):=\tilde{z}_{\si}(x,\si)-\tilde{z}^p(x,\si),$
then
\begin{eqnarray*}
h_{\si} = \Delta h+p(p-1) \tilde{z}^{p-2} |\nabla \tilde{z}|^2+ D p \tilde{z}^{p-1}\,h
 \geq  \Delta h+ D p \tilde{z}^{p-1}\,h \quad\mbox{in}\quad Q_0,
\end{eqnarray*}
with
\bgee
h(x,0)= \Delta \tilde{z}(x,0)+D\tilde{z}^p(x,0)-\tilde{z}^p(x,0)=\Delta u_0+(D-1)u_0^p\geq u_0^p>0,\quad\mbox{in}\quad \Om_0,
\egee
whilst
$
\frac{\partial h}{\partial \nu}=0\;\mbox{on}\;\partial\Omega_0\times(0, \min\{ \si_0, \Si_\delta\}).
$
Therefore, owing to the maximum principle, we derive
$
\tilde{z}_{\si}>\tilde{z}^p\quad\mbox{in}\quad Q_0,
$
and thus via integration we obtain
\[ \tilde{z}(0,\si)\geq\left(\frac{1}{z_0^{p-1}(0)}-(p-1)\si\right)^{-\frac{1}{p-1}}=\left\{\left(\frac{\de^{a}}{\la(1+\frac{a}{2})}\right)^{p-1}-(p-1)\si\right\}^{-\frac{1}{p-1}}
\]
for $0<\si<\min\{\si_0, \Si_\delta\}$, and therefore,
\begin{equation}
\min\{\si_0, \Si_\delta\}<\frac{1}{p-1}\cdot
\left( \frac{\delta^a}{\lambda(1+\frac{a}{2})}\right)^{p-1}.
 \label{eqn:6.38}
\end{equation}
Note that for $0<\delta\ll 1$, then the right-hand side on (\ref{eqn:6.38}) is less than $\si_0$, so
$\Si_\delta<\frac{1}{p-1}\cdot
\left( \frac{\delta^a}{\lambda(1+\frac{a}{2})}\right)^{p-1}<+\infty$.
\end{proof}
\begin{remark}
Recalling that $z(x,\si)=e^{\int^{\si}\Phi(s)\,ds} u(x,\si)$ we also obtain the occurrence of a single-point blow-up for the solution $u$ of problem \eqref{rnstregm1}-\eqref{rnstregm3}.
\end{remark}
\begin{remark}
Notably, by \eqref{eqn:6.38} we conclude that $\Si_\de \to 0$ as $\de\to 0,$ i.e. the more spiky  initial data we consider then the faster the diffusion-driven blow-up for $z$ and thus  for $u$  take place.
\end{remark}
A diffusion-driven instability (Turing instability) phenomenon, as was first indicated in the seminal paper \cite{t52}, is often followed by pattern formation. A similar situation is observed as a consequence of the driven-diffusion finite-time blow-up provided by Theorem \ref{thm:6.1}, and it is described below. The blow-up rate of the solution $u$ of \eqref{rnstregm1}-\eqref{rnstregm3}
and the blow-up pattern (profile) identifying the formed pattern are given.
\begin{theorem}\label{tbu}
Take $N\ge 3,\;\max\{r, \frac{N}{N-2}\}<p<\frac{N+2}{N-2}$ and $\frac{2}{N}<\frac{p-1}{r}<\gamma.$  Assume that both $\Phi(\si)$ and $\Psi(\si)$ are positive and bounded. Then the blow-up rate of the solution of \eqref{rnstregm1}-\eqref{rnstregm3}  can be characterised as follows
\be
\Vert u(\cdot, \si)\Vert_\infty \ \approx \ (\Si_{\max}-\si)^{-\frac{1}{p-1}}, \quad t\uparrow \Si_{\max},\quad\label{ik2}
\ee
 where $\Si_{\max}$ stands for the blow-up time.
\end{theorem}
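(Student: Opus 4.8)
The plan is to establish the type-I (self-similar) blow-up rate $\Vert u(\cdot,\si)\Vert_\infty \approx (\Si_{\max}-\si)^{-1/(p-1)}$ by transferring the problem to the variable $z=e^{\int^\si\Phi(s)\,ds}u$ introduced before \eqref{tbsd3}, for which the equation $z_\si=\Delta_R z+K(\si)z^p$ holds with $K(\si)$ bounded above and below away from zero by \eqref{eqn:6.8} and \eqref{eqn:6.33} (the lower bound $K\geq D>0$ being available once we are in the regime of Theorem \ref{thm:6.1}). Since $\Phi$ is bounded, $u$ and $z$ blow up simultaneously and with the same rate, so it suffices to prove the estimate for $z$. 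The upper bound on $K$ together with the restriction $p<\frac{N+2}{N-2}$ (Sobolev-subcritical) is exactly what one needs to run the standard rescaling argument of Giga--Kohn: because the nonlinearity is subcritical and the coefficient $K(\si)$ is merely a bounded, positive, slowly varying multiplier, the a priori estimates that rule out faster-than-self-similar blow-up carry over essentially verbatim.

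The key steps, in order, would be: (i) recall from Theorem \ref{thm:6.1} and Remark \ref{nkl1a} (estimate \eqref{eqn:6.26}) that blow-up occurs and is a single-point blow-up at the origin, so that we may localise near $R=0$; (ii) derive the \emph{lower} bound $\Vert z(\cdot,\si)\Vert_\infty\geq c(\Si_{\max}-\si)^{-1/(p-1)}$ directly: averaging or evaluating the equation at the maximum point and using $K(\si)\leq C$ gives $\frac{d}{d\si}\Vert z\Vert_\infty \lesssim \Vert z\Vert_\infty^p$ in the viscosity sense, whose integration from $\si$ to $\Si_{\max}$ yields the lower bound; (iii) derive the matching \emph{upper} bound, which is the substantive part: introduce similarity variables $y=(R)/\sqrt{\Si_{\max}-\si}$, $s=-\log(\Si_{\max}-\si)$, and $w(y,s)=(\Si_{\max}-\si)^{1/(p-1)}z(R,\si)$, so that $w$ solves a rescaled equation $w_s=\Delta w-\tfrac12 y\cdot\nabla w-\tfrac{1}{p-1}w+\tilde K(s)w^p$ with $\tilde K(s)=K(\si(s))$ still bounded between two positive constants; (iv) establish a uniform-in-$s$ bound on $w$ by the energy/monotonicity method — one constructs a Lyapunov functional for the rescaled equation (the usual Giga--Kohn energy, here carrying the extra bounded factor $\tilde K(s)$, which one controls since $\frac{d}{ds}\tilde K$ need not be sign-definite but $\tilde K$ itself is trapped in $[D,C]$), combines it with the subcritical Sobolev embedding and a bootstrap to obtain $\sup_s\Vert w(\cdot,s)\Vert_\infty<\infty$; (v) translate this back: $\sup_s\Vert w\Vert_\infty<\infty$ is precisely $\Vert z(\cdot,\si)\Vert_\infty\leq C(\Si_{\max}-\si)^{-1/(p-1)}$, and combining with step (ii) and $z=e^{\int^\si\Phi}u$ with $\Phi$ bounded yields \eqref{ik2}.

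The main obstacle is step (iv): making the Giga--Kohn energy argument work in the presence of the time-dependent coefficient $\tilde K(s)$. In the autonomous case the monotonicity of the energy is what forces $w$ to converge to a stationary solution and yields the uniform bound; here $\tilde K$ depends on $s$ (indeed it depends on the solution through $\avint z^r$), so the energy is not exactly monotone. One must either (a) exploit that $\tilde K$ is bounded and that, by Proposition \ref{lem4} together with \eqref{eqn:6.18h}, $\avint z^r$ is comparable to a power of $\bar z$ which in turn is trapped between $\bar u_0$ and $2\bar u_0$ on the relevant time window, giving enough control on $\frac{d}{ds}\tilde K$; or (b) bypass the energy by a purely parabolic-estimates route (interior $L^p$–$L^\infty$ smoothing for the rescaled equation, which only uses the \emph{two-sided} bound on $\tilde K$ and subcriticality). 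Route (b) is the cleaner one to carry out and is the approach I would pursue, citing the Sobolev-subcritical range $p<\frac{N+2}{N-2}$ as the place where it is used and invoking the parabolic regularity already used in the proof of Proposition \ref{lem4} (the De Giorgi--Nash--Moser estimates from \cite{Lie96}). The blow-up \emph{profile} claim — that the rescaled solution converges to the constant $\bigl(\tilde K_\infty(p-1)\bigr)^{-1/(p-1)}$, giving the flat blow-up pattern — then follows from the usual classification of bounded global solutions of the rescaled equation, provided $\tilde K(s)$ converges as $s\to\infty$, which one extracts from the fact that $\bar z(\si)\to\bar z(\Si_{\max})$ and $\avint z^r$ stabilises accordingly.
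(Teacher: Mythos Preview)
Your overall architecture---pass to $z$, get the lower rate bound by integrating the ODE $\frac{d}{d\si}\Vert z\Vert_\infty\leq K(\si)\Vert z\Vert_\infty^p$ from $\si$ to $\Si_{\max}$, then translate back via $z=e^{\int^\si\Phi}u$---matches the paper exactly, including the Friedman--McLeod differentiation of the sup-norm. The difference is in how the \emph{upper} rate bound is obtained. You propose to run the Giga--Kohn rescaling directly for the equation $z_\si=\Delta z+K(\si)z^p$ with a merely bounded time-dependent coefficient, and you correctly flag the non-monotone energy as the main obstacle. The paper sidesteps this entirely: it first compares $z$ with the solution $\Theta$ of the \emph{local} problem $\Theta_\si=\Delta\Theta+C_1\Theta^p$ (using only the upper bound $K\leq C_1$), imports from \cite[Theorem~44.6]{qs07} the spatial decay $|z(x,\si)|\leq C_\eta|x|^{-2/(p-1)-\eta}$, and then---this is the point you relegate to a final remark---uses that decay together with the argument of \cite[Theorem~9.1]{KS16} to prove that $K(\si)$ actually \emph{converges} to a finite positive limit $\omega$ as $\si\to\Si_{\max}$. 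Once $K$ is known to stabilise, the upper bound $\Vert z\Vert_\infty\leq C_U(\Si_{\max}-\si)^{-1/(p-1)}$ is quoted directly from \cite[Theorem~44.3(ii)]{qs07}; no bespoke energy argument is needed.

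Two cautions about your route. First, the lower bound $K\geq D>0$ you invoke from \eqref{eqn:6.33} is proved only for the specific spiky initial data \eqref{eqn:6.1} and only on the window $(0,\min\{\si_0,\Si_\delta\})$; Theorem~\ref{tbu} is stated for an arbitrary blowing-up radial solution, so you cannot rely on that estimate (the paper's proof uses only the upper bound on $K$ plus its convergence). Second, your ``route (b)''---parabolic $L^p$--$L^\infty$ smoothing for the rescaled equation---does not by itself yield $\sup_s\Vert w(\cdot,s)\Vert_\infty<\infty$ without some a priori integral control on $w$, which in the autonomous case comes precisely from the energy monotonicity you are trying to avoid. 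The paper's shortcut of proving $\lim_{\si\to\Si_{\max}}K(\si)=\omega$ first is what makes the whole thing reduce to a citation; if you want a self-contained argument, that convergence is the statement you should aim to prove directly, rather than working around it.
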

\begin{proof}
We first perceive that by virtue of \eqref{eqn:6.8} and in view of the H\"{o}lder's inequality, since $p>r,$ it follows
\bge\label{nkl4}
0<K(\si)=\frac{\Psi(\si)e^{(1+r\gamma-p)\int^\si \Phi(s)\,ds}}{\Big(\displaystyle\avint z^r \Big)^{\gamma}}\leq C_1<+\infty.
\ege
Define now $\Theta$ satisfying the partial differential equation
\[\Theta_{\si}=\Delta \Theta+C_1\Theta^{p},\quad\mbox{in}\quad \Om_0\times (0,\Si_{max})\],
with $\frac{\partial \Theta}{\partial \nu}=0,$ on $\partial\Om_0\times(0,\Si_{max})$, and
$\Theta(x,0)=z_0(x)$, in $\Om_0$,
then via comparison $z\leq \Theta$ in $\Om_0\times (0,\Si_{max}).$ Yet it is known, see \cite[Theorem 44.6]{qs07}, that
$
|\Theta(x,\si)|\leq C_{\eta}|x|^{-\frac{2}{p-1}-\eta}\quad\mbox{for}\quad \eta>0,
$
and thus
\bge\label{tbsd18}
|z(x,\si)|\leq C_{\eta}|x|^{-\frac{2}{p-1}-\eta}\quad\mbox{for}\quad (x,\si)\in \Om_0\times (0, \Si_{max}).
\ege
Following the same steps as in proof of \cite[Theorem 9.1]{KS16}  we derive
\bge\label{tbsd18a}
\lim_{\si\to \Si_{max}} K(\si)=\omega\in(0,+\infty).
\ege
By virtue of \eqref{tbsd18a} and applying  \cite[Theorem 44.3(ii)]{qs07} we can find a constant $C_{U}>0$ such that
\bge\label{ube}
\left|\left|z(\cdot,\si)\right|\right|_{\infty}\leq C_{U}\left(\Si_{max}-\si\right)^{-\frac{1}{(p-1)}}\quad\mbox{in}\quad (0, \Si_{max}).
\ege
Setting $N(\si):=\left|\left|z(\cdot,\si)\right|\right|_{\infty}=z(0,\si),$ then $N(\si)$ is differentiable for almost every $\si\in(0,\Si_{\de}),$ in view of  \cite{fmc85}, and
$
\frac{dN}{d\si}\leq K(\si) N^p(\si).
$
Notably $K(\si)\in C([0,\Si_{\max}))$ and owing to \eqref{nkl4} it is bounded in any time interval $[0,\si],\; \si<\Si_{\max};$ then upon integration we obtain
\bge\label{lbe}
\left|\left|z(\cdot,\si)\right|\right|_{\infty}\geq C_L\left(\Si_{\max}-\si\right)^{-\frac{1}{(p-1)}}\quad\mbox{in}\quad (0, \Si_{\max}),
\ege
for some positive constant $C_L.$

Recalling that  $z(x,\si)=e^{\int^{\si}\Phi(s)\,ds} u(x,\si)$ then \eqref{ube} and \eqref{lbe} entail
\bgee
\widetilde{C}_L\left(\Si_{max}-\si\right)^{-\frac{1}{(p-1)}}\leq\left|\left|u(\cdot,\si)\right|\right|_{\infty}\leq \widetilde{C}_U\left(\Si_{max}-t\right)^{-\frac{1}{(p-1)}}\quad\mbox{for}\quad \si\in(0, \Si_{max}),
\egee
where now $\widetilde{C}_L, \widetilde{C}_U$ depend on $\Si_{max},$
and thus \eqref{ik2} is proved.
\end{proof}

\begin{remark}\label{nkl7}
We first perceive that \eqref{tbsd18} provides a rough form of the blow-up pattern for $z$ and thus for $u$ as well. Additionally, owing to \eqref{nkl4} the non-local problem \eqref{tbsd3}-\eqref{tbsd3b} can be treated as a local one for which the more accurate asymptotic blow-up profile, \cite{mz98}, is known and is given by
$
\lim_{\si\to \Si_{max}}z(|x|,\si)\sim C\left[\frac{|\log |x||}{|x|^2}\right]\;\mbox{for}\; |x|\ll 1,\;\mbox{and}\; C>0.
$
Using again the relation between $z$ and $u$ we end up with a similar asymptotic blow-up profile for the diffusion-driven-induced blow-up solution $u$ of problem \eqref{rnstregm1}-\eqref{rnstregm3}.  This blow-up profile actually determines the form of the developed patterns which are induced as a result of the diffusion-driven instability and it is numerically investigated in the next section.
\end{remark}
\section{Numerical Experiments}\label{num-sec}
To illustrate some of the theoretical results of the previous sections we perform a series of numerical experiments for which we solve the involved PDE problems using the Finite Element Method \cite{j87}, using piecewise linear basis functions and implemented using the adaptive finite-element toolbox ALBERTA \cite{ss05}. In all our simulations (unless stated otherwise) the domain was triangulated using 8192 elements, the discretisation in time was done using the forward Euler method taking $5\times 10^{-4}$ as time-step and the resulting linear system solved using Generalized Minimal Residual iterative solver \cite{sa03}.

\subsection{Experiment 1}
We take an initial condition $u_0(x)$ and a set of parameters satisfying the assumptions of Theorem~\ref{thm1} and solve \eqref{nstregm1t}-\eqref{nstregm3t} on $\Omega_0=\left[0, 1 \right]^2$. The initial condition is chosen
$
u_0(x,0)=\cos(\pi y)+2.
$
As for the domain evolution we consider four different cases:
\begin{itemize}
\item $\rho(t)=e^{\beta t}$ (exponentially growing domain);
\item $\rho(t)=e^{-\beta t}$ (exponentially decaying domain);
\item $\rho(t)=\frac{e^{\beta t}}{1+\frac{1}{m}\left(e^{\beta t}-1\right)}$ (logistically growing domain);
\item $\rho(t)=1$ (static domain).
\end{itemize}

We summarise all parameters used in Table \ref{table1}. In Fig.~\ref{figure1}, we demonstrate the $||u(x,t)||_\infty$ for each of the domain evolutions, so we can monitor their respective blow-up times.
\begin{table}[h!]\label{table1}
\begin{center}
\begin{tabular}{c|c|c|c|c|c|c}
$D_1$ & $p$ & $q$ & $r$ & $s$ & $\beta$ & $m$\\ \hline
1 & 3 & 2 & 1 & 2 & 0.1 & 1.5
\end{tabular}
\caption{Set of parameters used in Experiment 1.}
\end{center}
\end{table}
\begin{figure}[ht!]\label{figure1}
\begin{center}
\includegraphics[trim=0mm 0mm 0mm 0mm, clip,scale=.3]{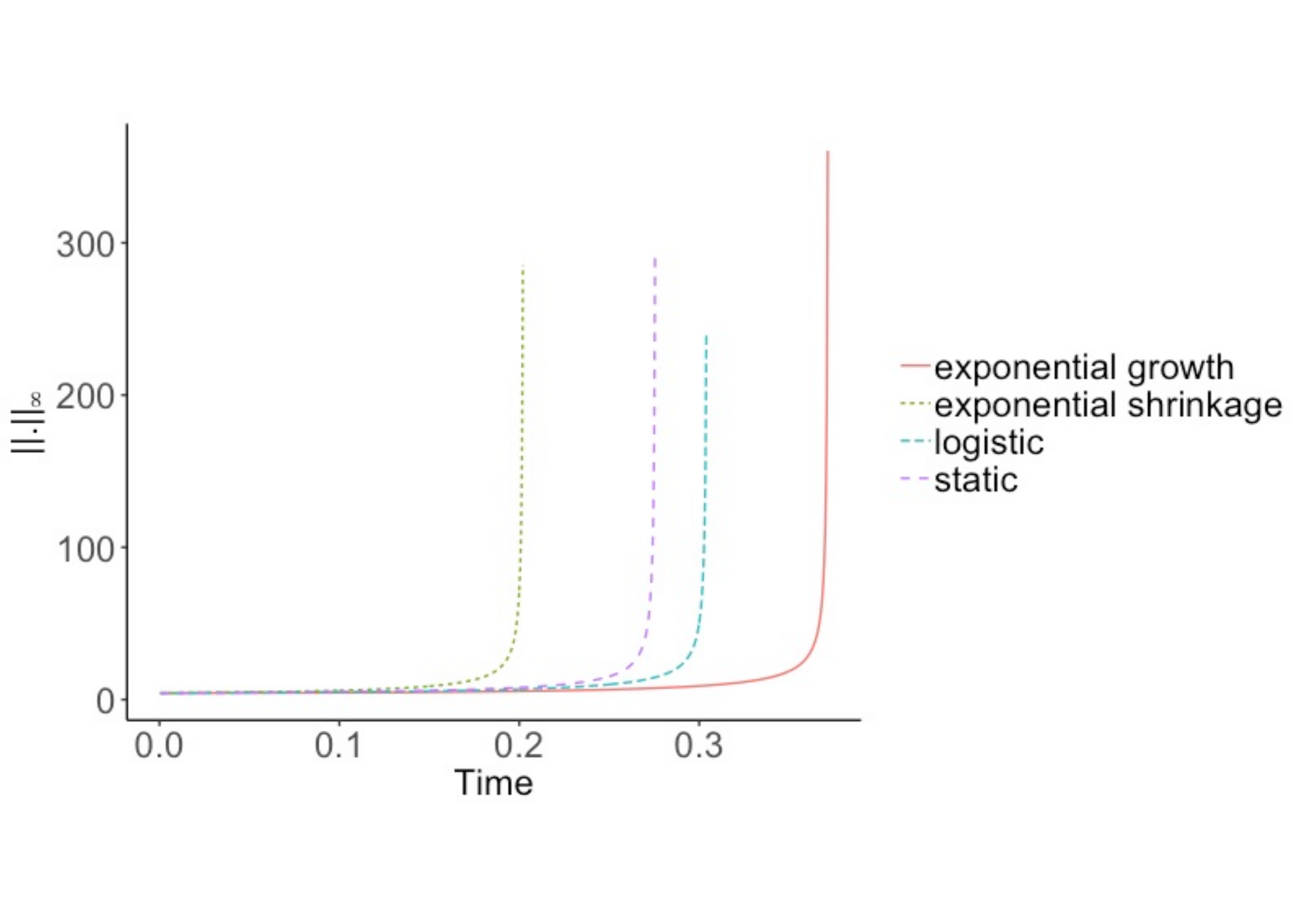}
\caption{Plots representing $||u(x,t)||_\infty$, where $u(x,t)$ is the numerical solution of \eqref{nstregm1t}-\eqref{nstregm3t} for different domain evolutions: static, exponentially decaying and growing, and logistically growing domains, starting from the initial condition $u_0=\cos(\pi y)+2$ in the unit square. Parameters are shown in Table~\ref{table1} and satisfy conditions of Theorem~\ref{thm1}. (Colour version online).}
\end{center}
\end{figure}
If we denote by $\Sigma_g$, $\Sigma_s$, $\Sigma_{lg}$and $\Sigma_1$ the blow-up times for the case of exponentially growing and decaying, the logistically growing domains and the static domain, respectively, we observe from Fig.~\ref{figure1} that we have the following relation
$\Sigma_g>\Sigma_{lg}>\Sigma_1>\Sigma_s,$
which is in agreement with the mathematical intuition.

We now take the same initial condition, $u_0$ and the same initial domain which we assume is evolving exponentially and consider parameters $D_1=1$, $p=1.4$, $q=1$, $r=1$ and $s=2$ for which inequality \eqref{nk11} of Remark~\ref{rem1a} holds. As we can see in Fig.~\ref{figure2}, we have an example of a solution $\bar{u}$ that does not blow up, as already conjectured in the aforementioned remark. Notably, this numerical experiment predicts a very interesting phenomenon both mathematically and biologically. It predicts the infinite-time quenching of the solution of problem \eqref{nstregm1t}-\eqref{nstregm3t} and thus the extinction of the activator in the long run, see also Remark \ref{nny}. Note also that this result is not in contradiction with Proposition \ref{eiq}, where infinite-time quenching is ruled out since condition \eqref{mts2an} is not satisfied for an exponentially growing domain where $\Phi(\si)$ is an unbounded function as indicated in Remark \ref{aal5}.

\begin{figure}[htp!]\label{figure2}
\begin{center}
\includegraphics[trim=0mm 0mm 0mm 0mm, clip,scale=0.2]{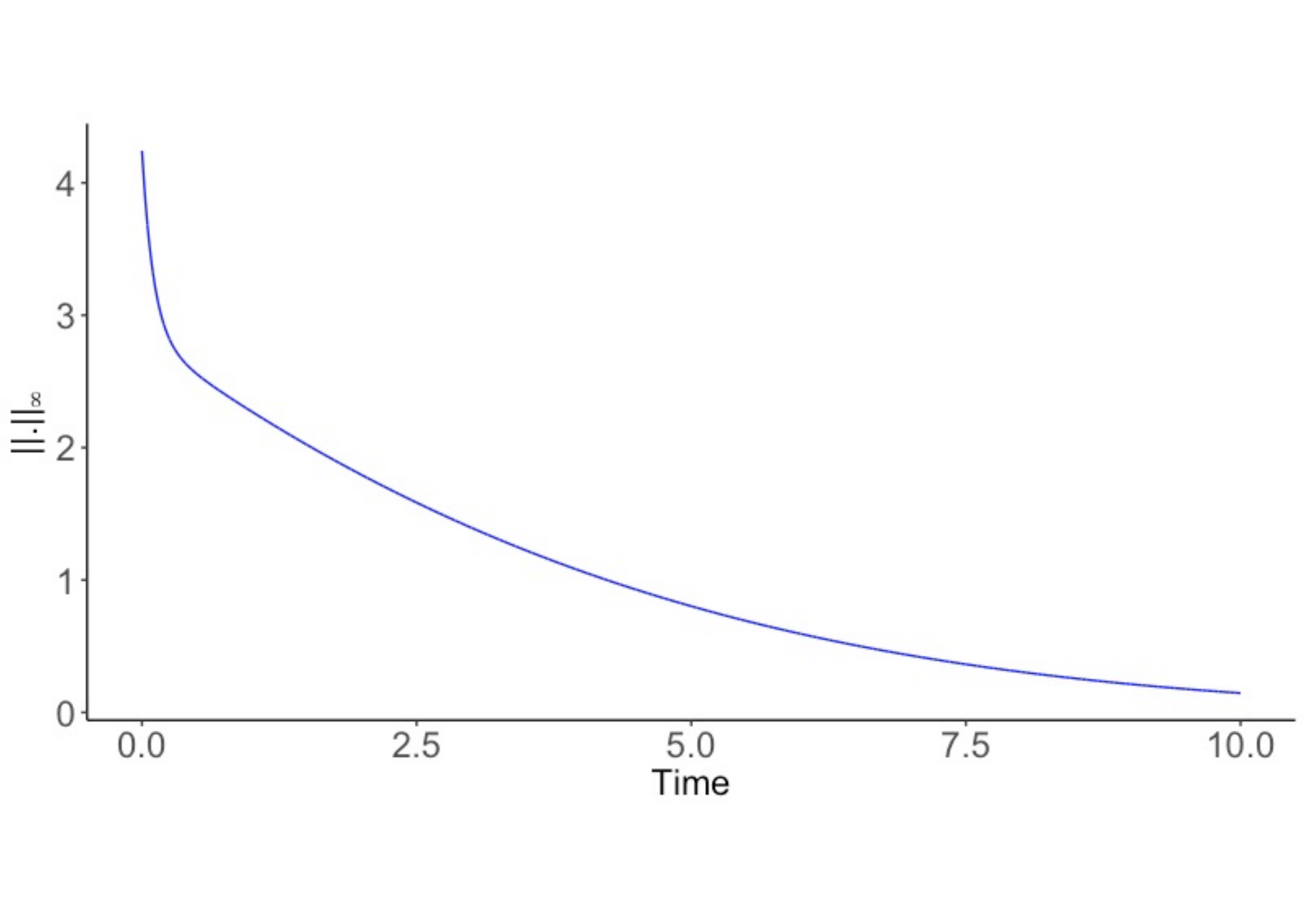}
\caption{The plot of $||\bar{u}||_\infty$ resulting from the numerical solution of \eqref{nstregm1t}-\eqref{nstregm3t} considering the unit square as initial domain, evolving exponentially, considering parameters $D_1=1$, $p=1.4$, $q=1$, $r=1$ and $s=2$. (Colour version online).}
\end{center}
\end{figure}

\subsection{Experiment 2}\label{exp2}

This experiment is meant to illustrate Theorem~\ref{thm4} and we take the same initial data $u_0=\cos(\pi y)+2$ and take $\Omega_0$ as the unit square when numerically solving equations \eqref{nstregm1t}-\eqref{nstregm3t}. As for domain evolution we  consider $\rho(t)=e^{\beta t}$, with $\beta=0.1$. To proceed, we consider two sets of parameters, one for which assumptions of Theorem~\ref{thm4} are satisfied and another for which those assumptions are not fulfilled. See Table~\ref{table2} for model parameters.
\begin{table}[h!]\label{table2}
\begin{center}
\begin{tabular}{c|c|c|c|c|c}
conditions of Th.~\ref{thm4}& $D_1$ & $p$ & $q$ & $r$ & $s$ \\ \hline
are verified & 1 & 1 & 2 & 3 & 2 \\
are not verified & 1 & 3 & 2 & 1 &1
\end{tabular}
\caption{Set of parameters used in Experiment 2.}
\end{center}
\end{table}

\begin{figure}[htp!]\label{figure3}
\begin{center}
\subfigure[Conditions of Th. \ref{thm4} are met ($p=1, q=2, r=3, s=2$).]{
{\includegraphics[trim=0mm 10mm 0mm 10mm, clip,scale=.2]{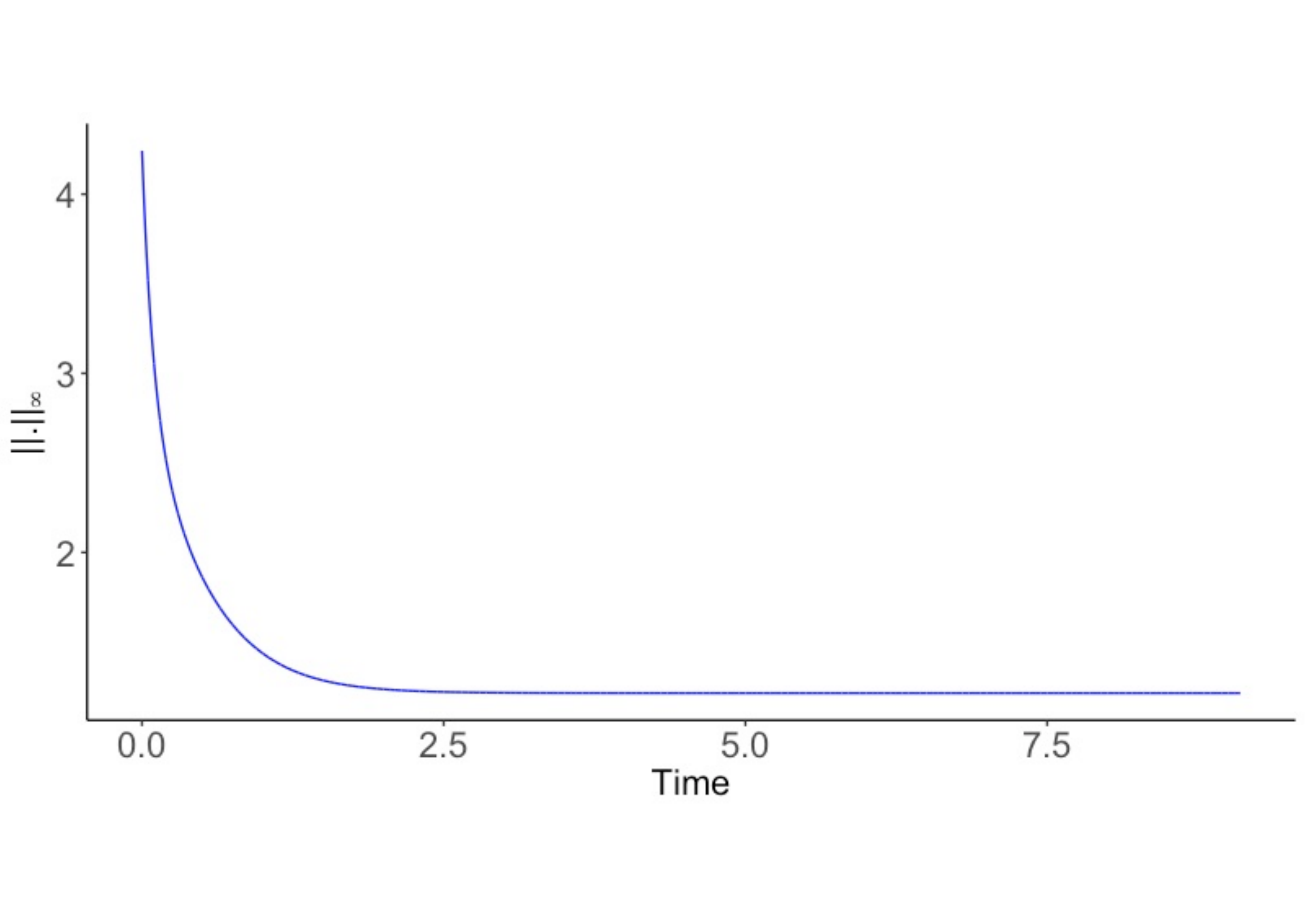}}}
\subfigure[Conditions of Th. \ref{thm4} are not met ($p=3, q=2, r=1, s=2$).]{
{\includegraphics[trim=0mm 10mm 0mm 10mm, clip,scale=.2]{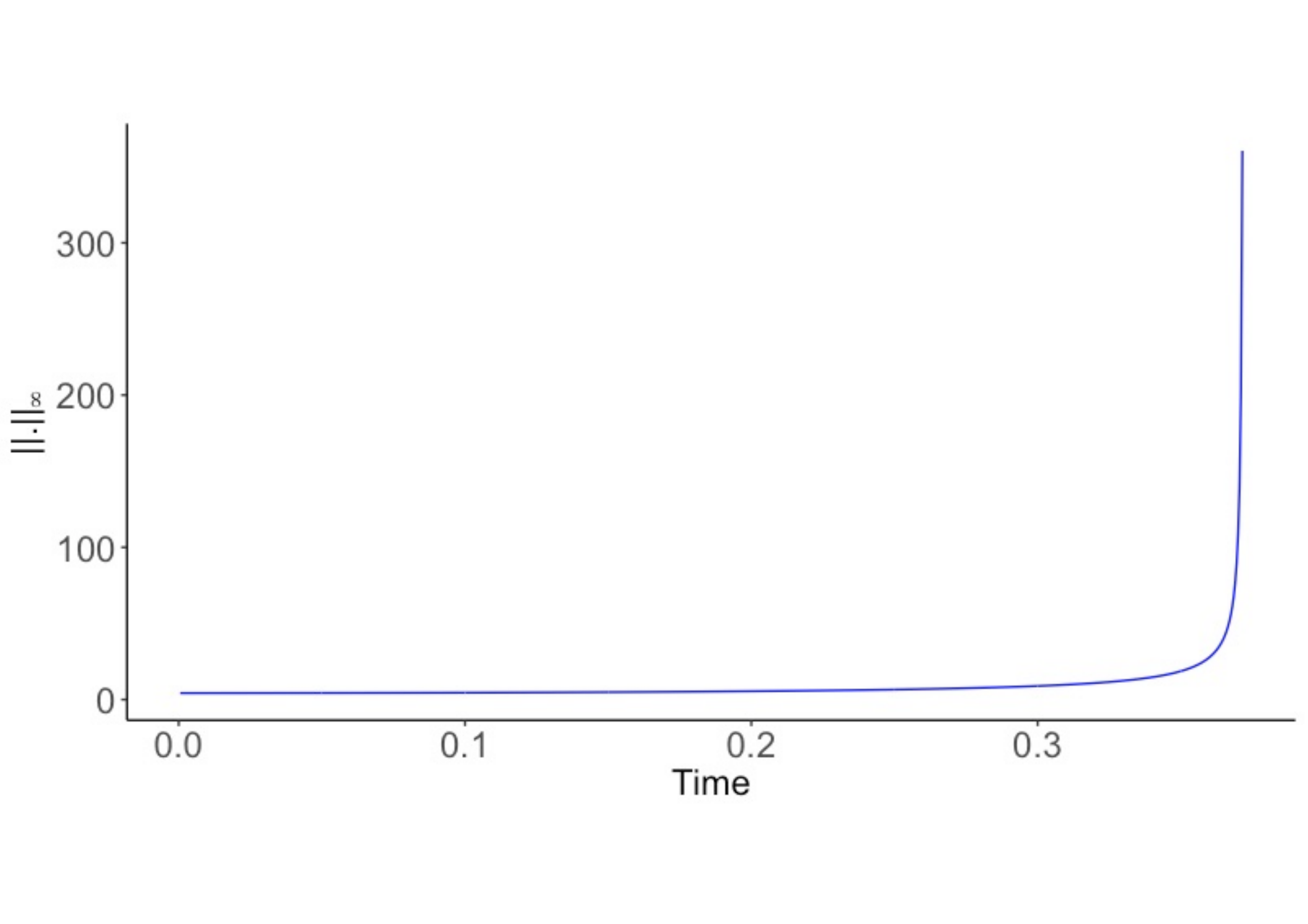}}}
\caption{The plot of $||u(x,t)||_\infty$, where $u(x,t)$ is the numerical solution of \eqref{nstregm1t}-\eqref{nstregm3t}. Initial condition is $u_0=\cos(\pi y)+2$ and $\Omega_0$ is the unit square evolving according to exponential growth ($\beta=0.1$). (Colour version online).}
\end{center}
\end{figure}

Results shown in Fig.~\ref{figure3} are in agreement with theoretical predictions of Theorem~\ref{thm4} since the solutions exists for all times when the assumption of the theorem are met (Fig.~\ref{figure3}(a)), otherwise a finite-time blow-up is exhibited to occur(Fig.~\ref{figure3}(b)).

\subsection{Experiment 3}
In this experiment we intend to illustrate Theorem~\ref{thm5} so we numerically solve \eqref{nstregm1t}-\eqref{nstregm3t} in $\R^3$, taking $\Omega_0$ as the unit sphere and initial condition $u_0$ given by \eqref{eqn:6.1}, considering $\delta=0.8$ and $\lambda=0.1.$  As for other parameters we choose $D_1=1$, $p=4$, $q=4$, $r=2$ and $s=1$, which satisfy the conditions of the theorem. In Fig.~\ref{figure4} we display the $L_\infty-$norm of the solution $u$ for three types of evolution laws implemented, namely: exponential decay, logistic decay and no evolution. For the exponential and logistic decay we select the same set of parameters as used in Experiment 1. As we can observe, for all the cases the solution blows up, as theoretically predicted by Theorem~\ref{thm5}. Again the blow-up times have the order
$
\Sigma_1>\Sigma_{ls}>\Sigma_s,
$
where now $\Sigma_{ls}$ stands for the blow-up time for the logistic decay evolution, beeing in agreement with the mathematical intuition.
\begin{figure}[ht!]\label{figure4}
\begin{center}
\includegraphics[trim=0mm 0mm 0mm 0mm, clip,scale=.25]{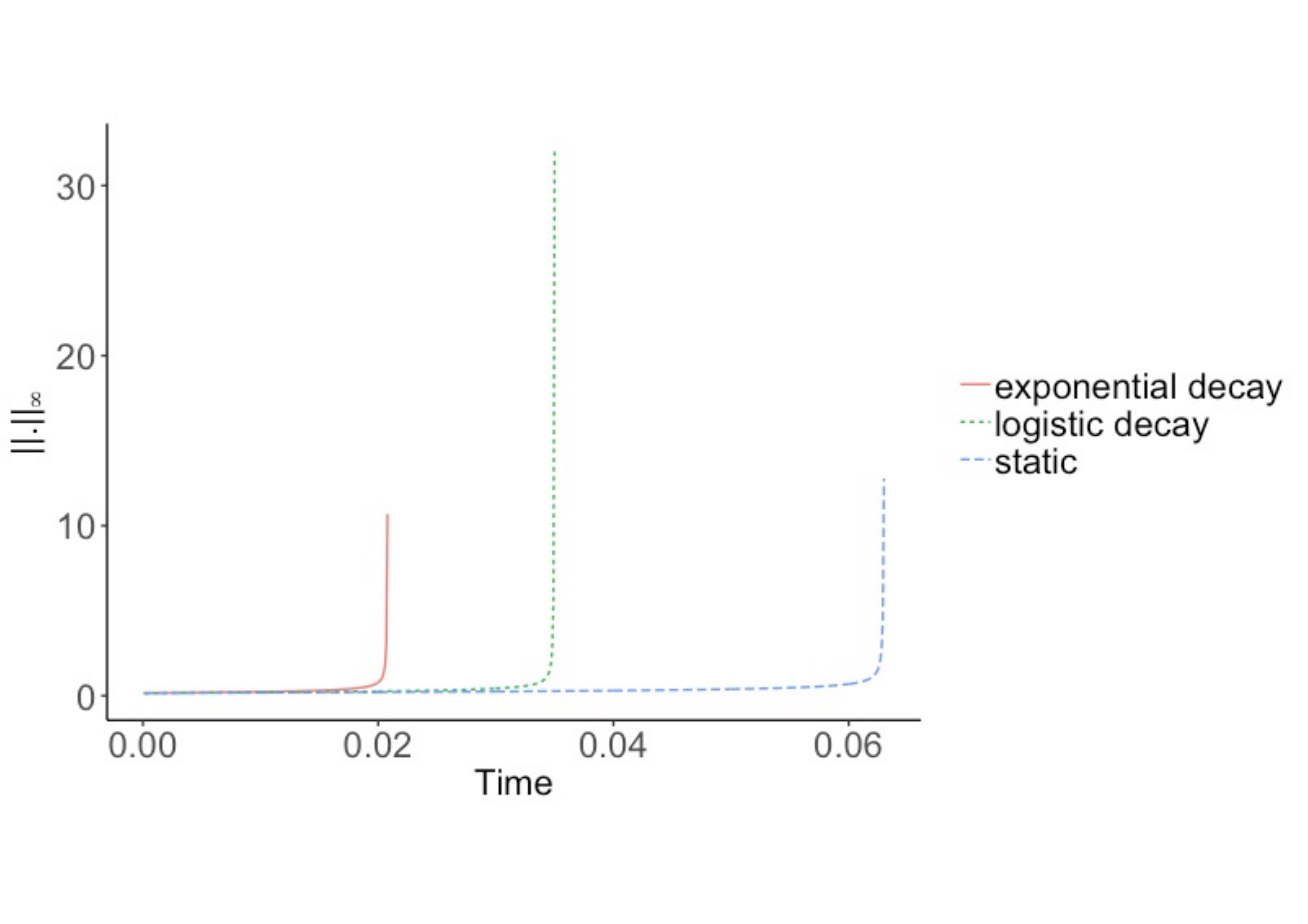}
\caption{Plots for $||u(x,t)||_\infty$, where $u(x,t)$ is the numerical solution of \eqref{nstregm1t}-\eqref{nstregm3t}, in $\R^3$, considering $\Omega_0$ the unit sphere. Three evolution laws considered: exponential decay, logistic decay and no evolution (static domain). Parameters used: $p=4, q=4, r=2, s=1$ and initial condition given by \eqref{eqn:6.1} taking $\delta=0.8$ and $\lambda=0.1$. (Colour version online).}
\end{center}
\end{figure}
In Fig.~\ref{figure5-a} and Fig.~\ref{figure5-b} we compare the initial solution with the solution at $t=0.03$ respectively, for the logistic decay, close to the blow-up time $t=0.03$, by looking at a cross section of the three-dimensional unit sphere $\Omega_0.$ Besides, in Fig.~\ref{figure5-c} and Fig.~\ref{figure5-d} again the solution at section  cross of $\Omega_0$ is depicted but now for the stationary and exponential decaying case respectively.  Through this experiment we can observe the formation of blow-up (Turing-instability) patterns around the origin $R=0.$ We actually conclude that the evolution of the domain has no impact on the form of blow-up patterns, however it definitely affects the spreading of Turing-instability patterns as it is obvious from Fig.~\ref{figure5-b}, Fig.~\ref{figure5-c} and Fig.~\ref{figure5-d}.


\begin{figure}%
\centering
\subfigure[][]{%
\label{figure5-a}%
\includegraphics[height=1.2in]{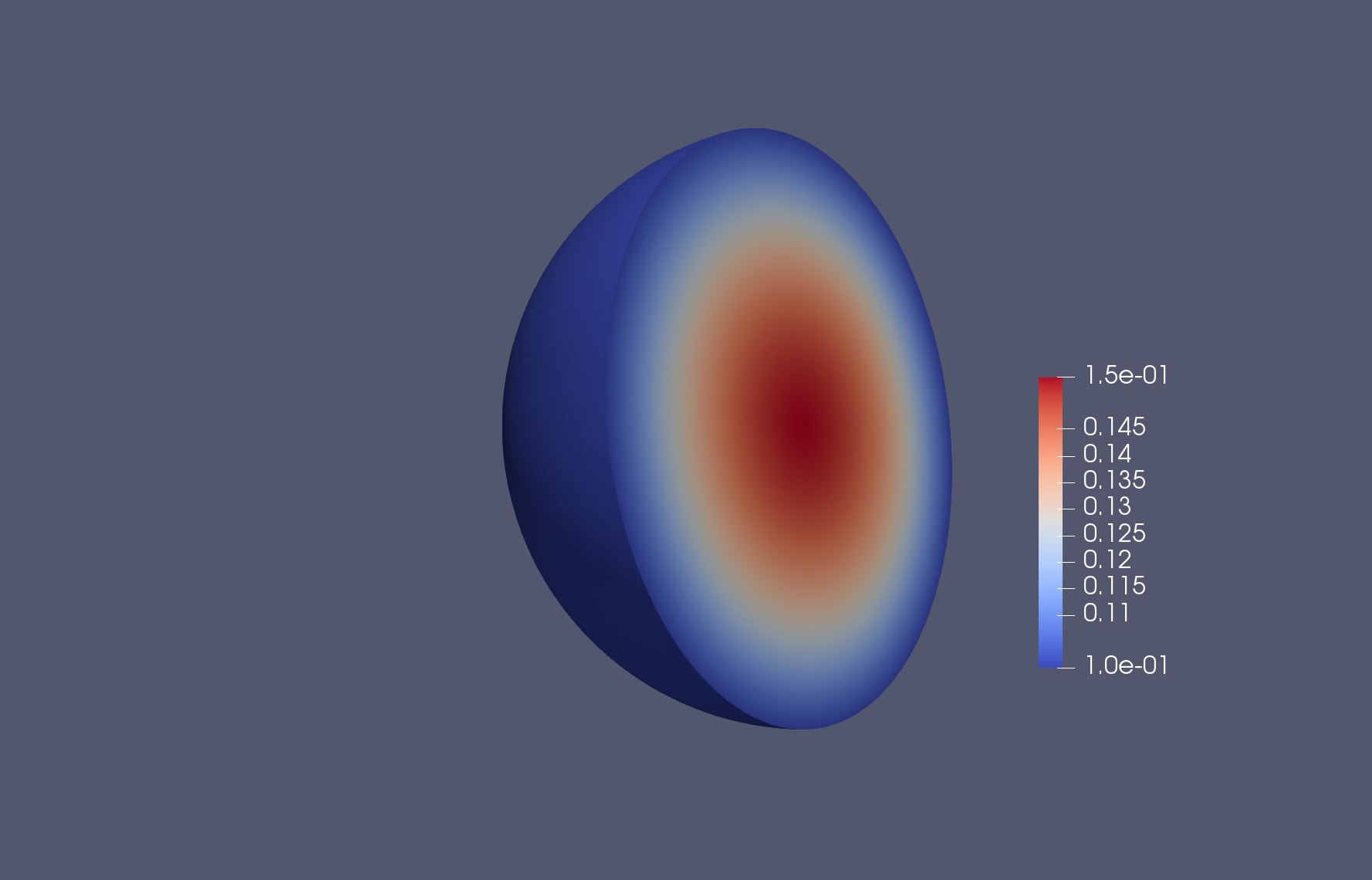}}%
\hspace{5pt}%
\subfigure[][]{%
\label{figure5-b}%
\includegraphics[height=1.2in]{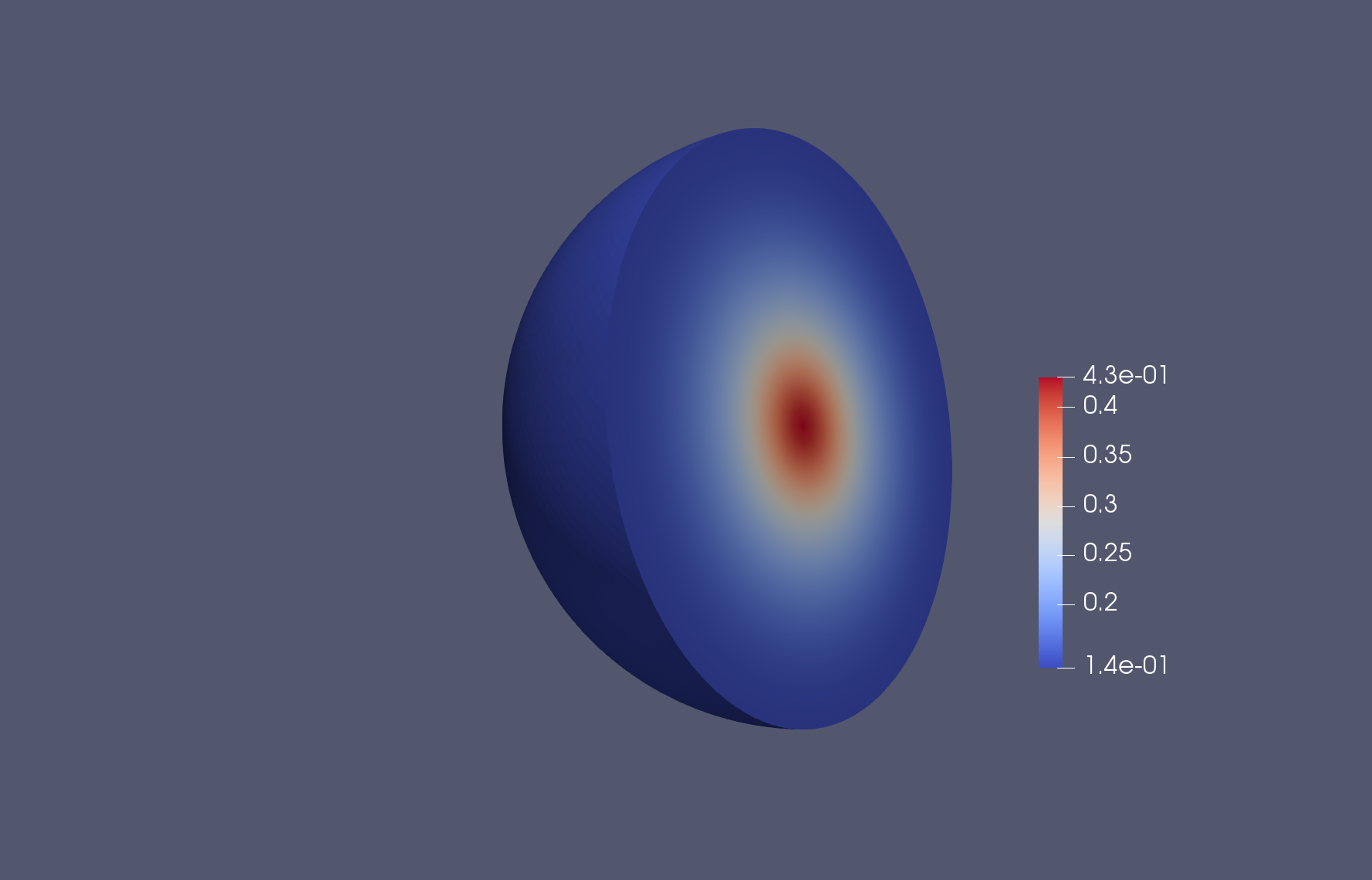}} \\
\subfigure[][]{%
\label{figure5-c}%
\includegraphics[height=1.2in]{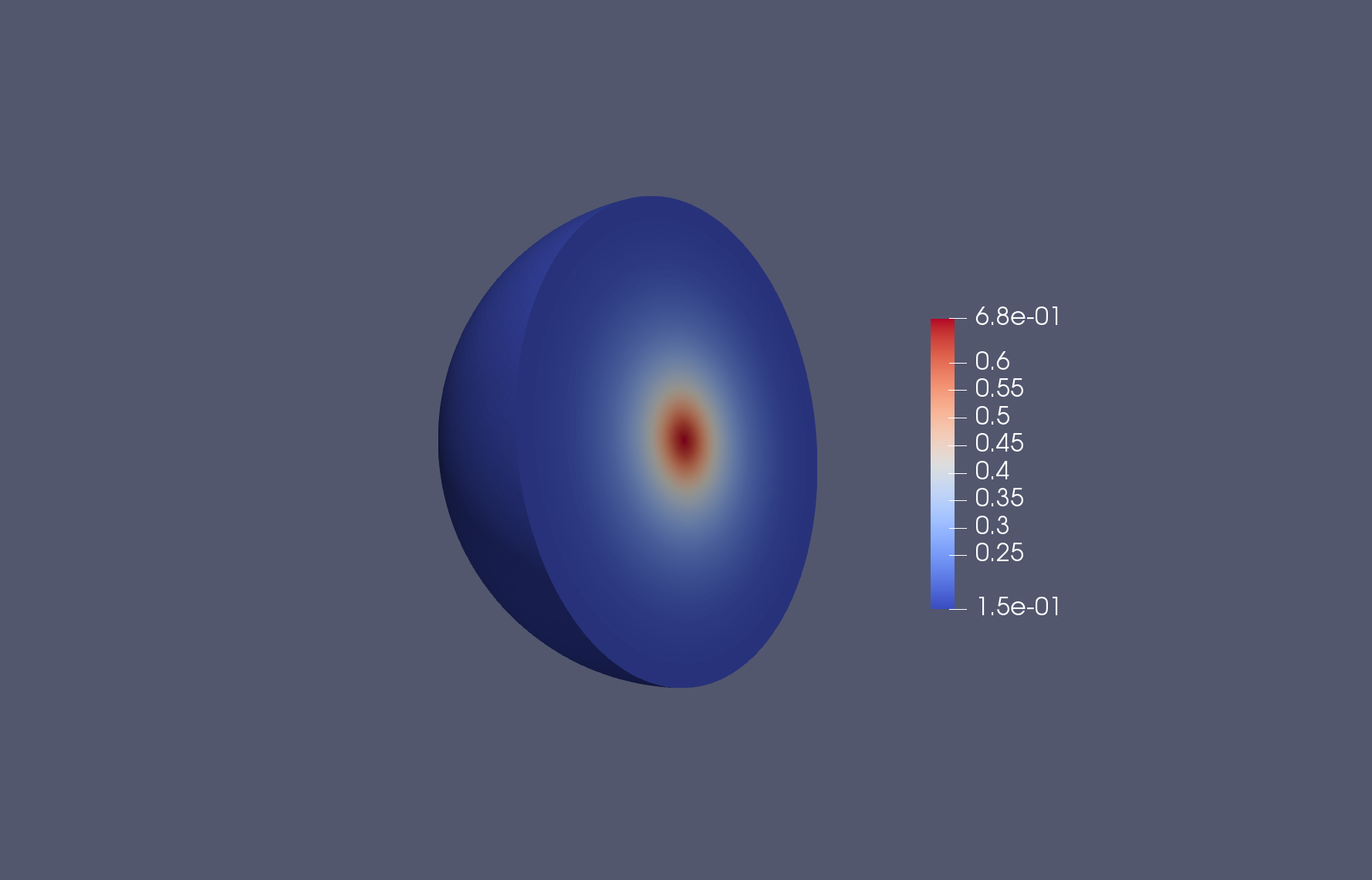}}%
\hspace{5pt}%
\subfigure[][]{%
\label{figure5-d}%
\includegraphics[height=1.2in]{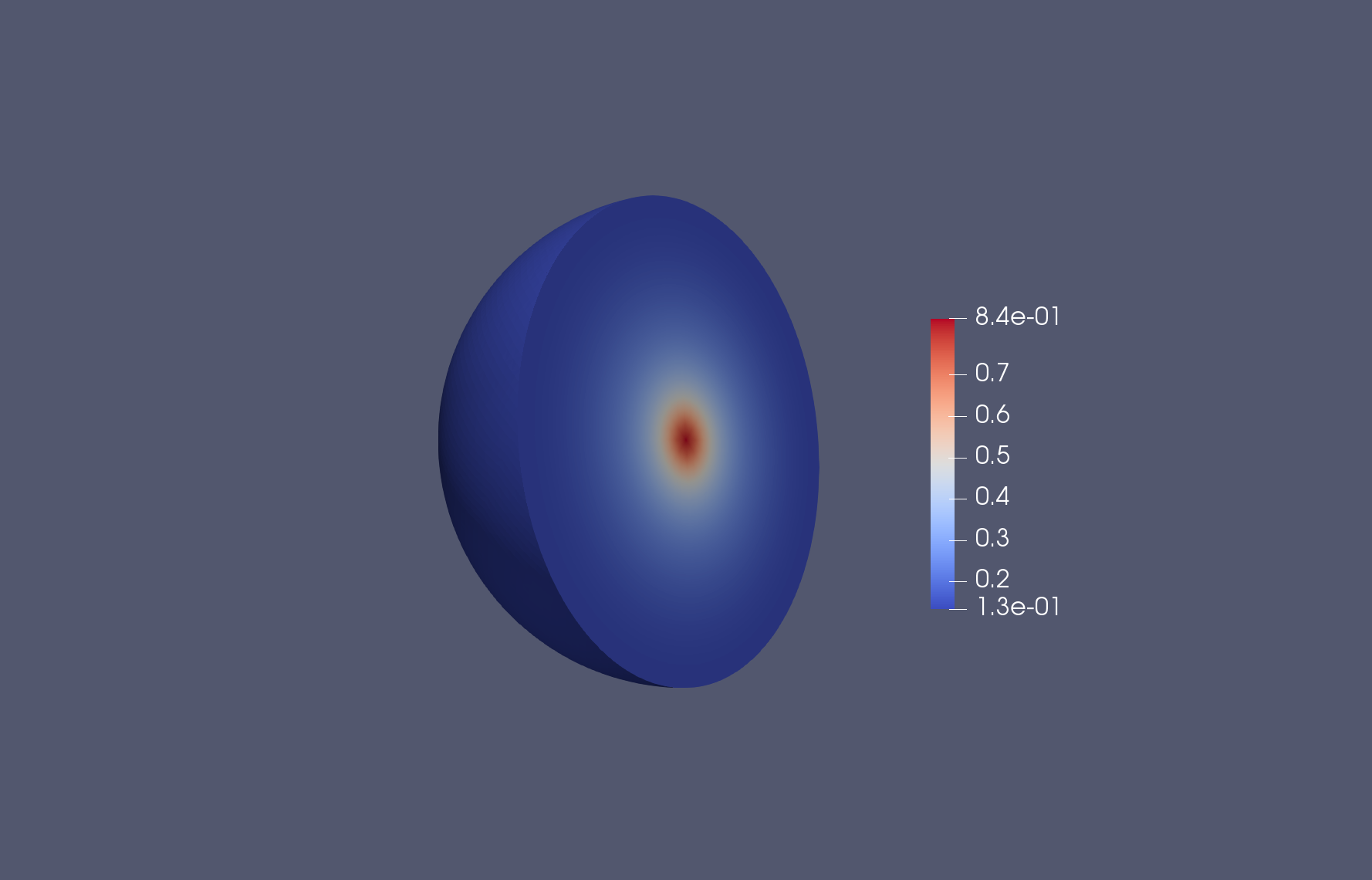}}%
\caption[A set of four subfigures.]{Numerical solution of experiment 3. The above figures actually show the blow-up  (Turing-instability) patterns on a cross-section of the three-dimensional sphere $\Omega_0.$:
\subref{figure5-a} Initial profile of solution for logistic growth;
\subref{figure5-b} blow-up pattern for logistic growth at $t=0.03$;
\subref{figure5-c} blow-up pattern for  stationary case at $t=0.07$; and,
\subref{figure5-d} blow-up pattern for exponential decay at $t=0.03$.(Colour version online)}%
\label{figure5}%
\end{figure}

\subsection{Experiment 4}
Next we design a numerical experiment to compare the dynamics of the reaction-diffusion system \eqref{tregm1}-\eqref{tregm2} with that of the non-local problem \eqref{nstregm1t}-\eqref{nstregm3t} under the assumptions of Theorem~\ref{thm1}. To this end we perform an experiment considering $u_0=\hat{u}_0=\cos(\pi y)+2$, $\Omega_0=\left[0, 1\right]^2$, $p=3$, $q=2$, $r=1$ and $s=2$. For the reaction-diffusion system \eqref{tregm1}-\eqref{tregm2} we also take in addition $D_1=0.01$, $D_2=1$, $\tau=0.01$ and $v_0=2$ whilst for \eqref{nstregm1t}-\eqref{nstregm3t} we only choose $D_1=0.01.$ For both cases we consider an exponential decaying evolution, with $\beta=0.1$. Unlike previous numerical examples, here the domain was triangulated using $786432$ elements with timestep $10^{-4}$.

The obtained results are displayed in Fig.~\ref{figure6} and they actually demonstrate that reaction-diffusion system \eqref{tregm1}-\eqref{tregm2} and  non-local problem \eqref{nstregm1t}-\eqref{nstregm3t} share the same dynamics. In particular the solutions of both problems exhibit  blow-up which takes place in finite time. The latter, biologically speaking, means that in the examined case we just need to monitor only the dynamics of the activator, whose dynamics governed by non-local problem \eqref{nstregm1t}-\eqref{nstregm3t}. Then we can get an insight regarding the interaction between both of the chemical reactants (activator and inhibitor) provided by  reaction-diffusion system \eqref{tregm1}-\eqref{tregm2}.

\begin{figure}[htp!]\label{figure6}
\begin{center}
\subfigure[$||\hat{u}||_\infty$ for the numerical solution of \eqref{tregm1}-\eqref{tregm2}.]{
{\includegraphics[trim=0mm 0mm 0mm 0mm, clip,scale=.15]{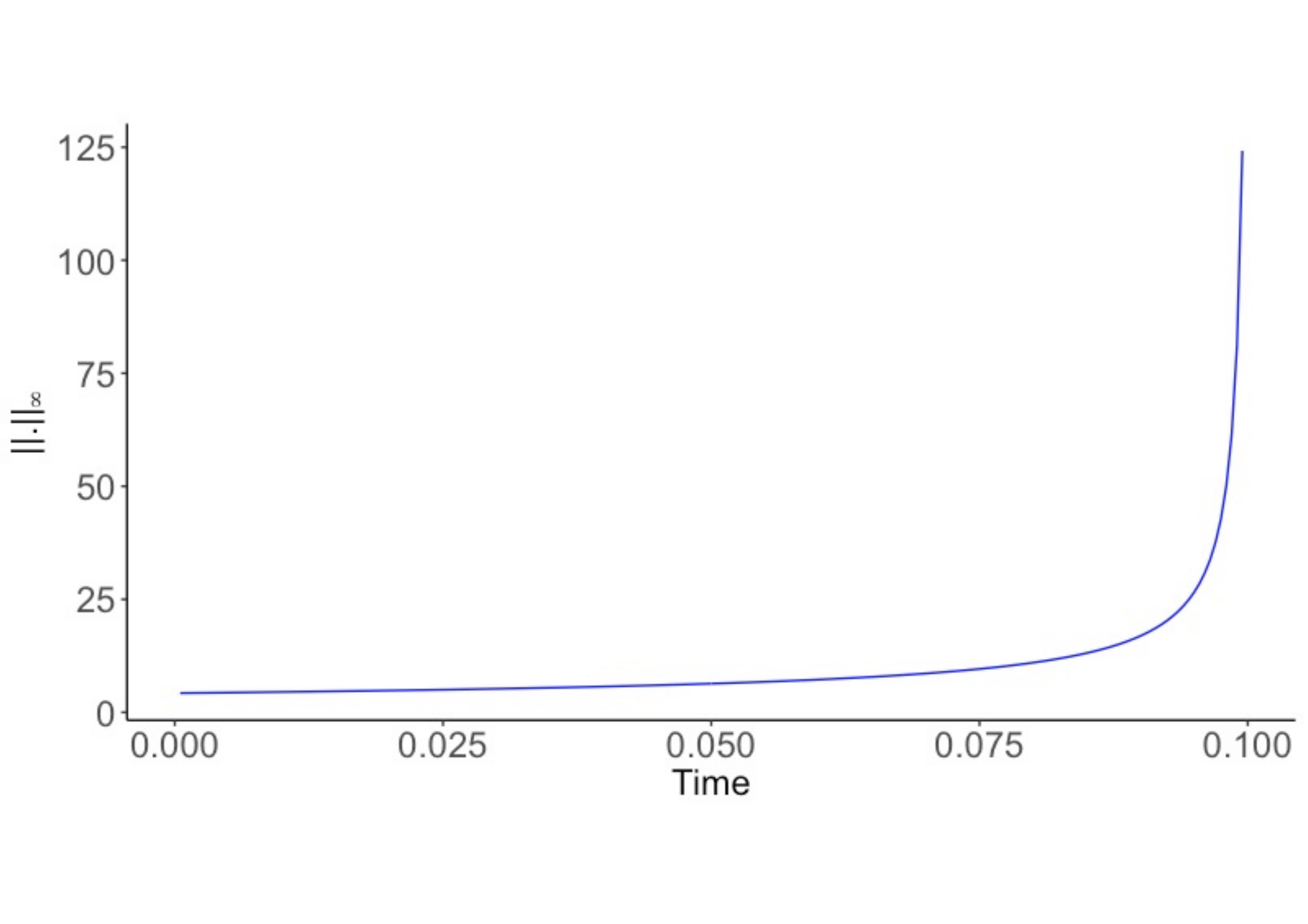}}}
\subfigure[$||u||_\infty$ for the numerical solution of \eqref{nstregm1t}-\eqref{nstregm3t}.]{
{\includegraphics[trim=0mm 0mm 0mm 0mm, clip,scale=.15]{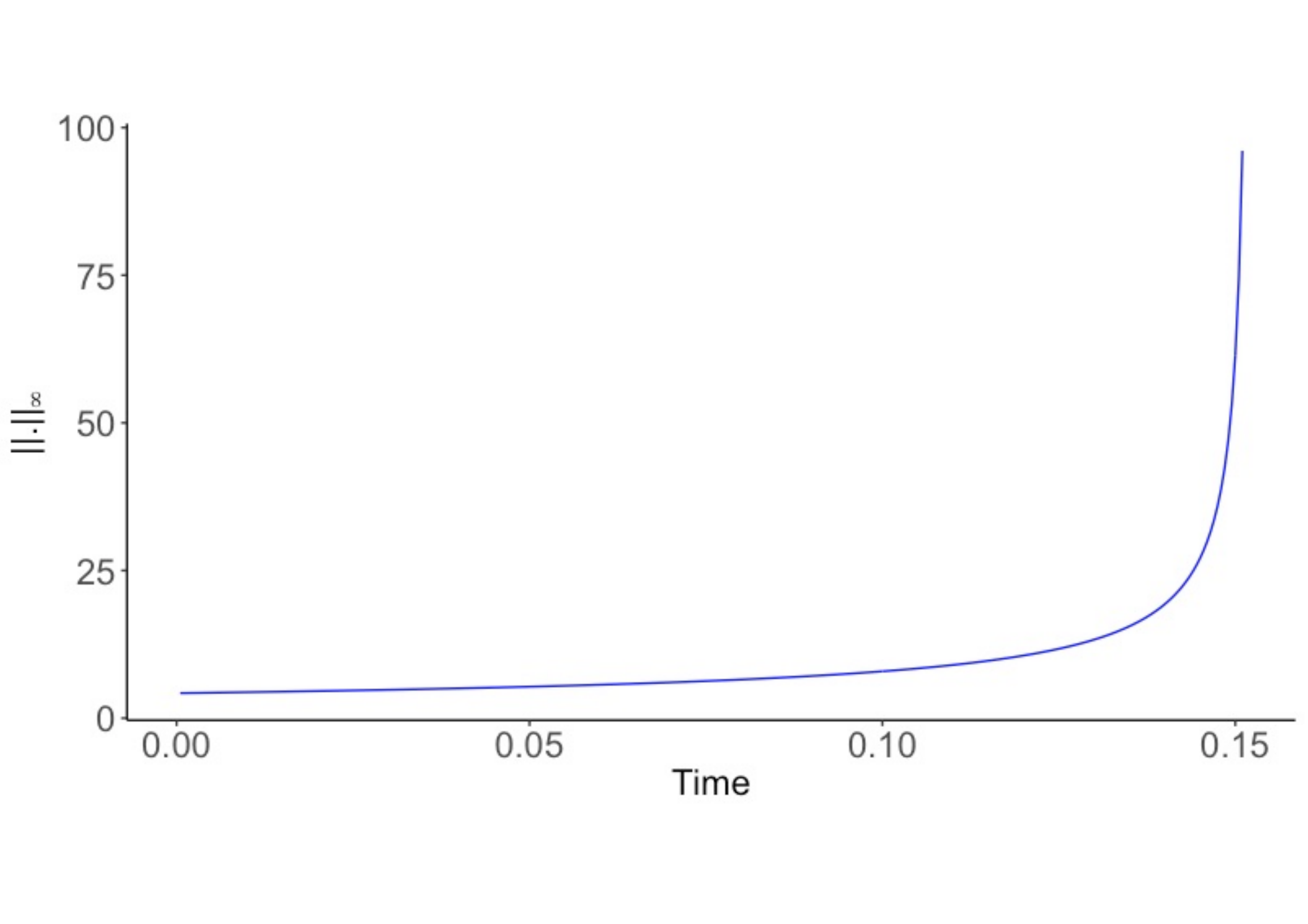}}}
\caption{Plots of the $L_\infty$ norm for the numerical solution of \eqref{tregm1}-\eqref{tregm2} and \eqref{nstregm1t}-\eqref{nstregm3t}. Initial condition is $u_0=\hat{u}_0=\cos(\pi y)+2$ and $\Omega_0$ is the unit square, exponentially decaying ($\beta=0.1$). (Colour version online).}
\end{center}
\end{figure}



\begin{thebibliography}{99}

\bibitem{bk18} A. Bobrowski \& M. Kunze, {\it Irregular convergence of mild solutions of semilinear equations}, {\bf arXiv:1807.04815v1.}

\bibitem{br11} H. Brezis, {\it Functional Analysis, Sobolev Spaces and Partial Diffrential Equations}, Springer 2011.

\bibitem{ev10} L.C. Evans, {\it Partial Differential Equations}, Graduate Studies in Mathematics Vol. 19, 2nd Edition,  AMS 2010

\bibitem{fmc85}A. Friedman \& J.B. McLeod, {\it Blow-up of positive solutions of semilinear heat equations}, Indiana Univ. Math. J. {\bf 34} (1985) 425-447.

\bibitem{gm72}A. Gierer \& H. Meinhardt, {\it A theory of biological pattern formation}, Kybernetik (Berlin) {\bf 12} (1972) 30-39.

\bibitem{hy95}B. Hu \& H-M. Yin, {\it Semilinear parabolic equations with prescribed energy}, Rend. Circ. Mat. Palermo {\bf 44} (1995) 479-505.



\bibitem{KS16} N.I. Kavallaris \& T. Suzuki, {\it On the dynamics of a non-local parabolic
equation arising from the GiererâMeinhardt
system }, Nonlinearity {\bf 30} (2017) 1734â-1761.

\bibitem{KS18} N.I. Kavallaris \& T. Suzuki, {\it  Non-Local Partial Differential Equations for Engineering and Biology:
Mathematical Modeling and Analysis}, Mathematics for Industry Vol. 31 Springer Nature 2018.

\bibitem{ke78}J. Keener, {\it Activators and inhibitors in pattern formation}, Stud. Appl. Math. {\bf 59} (1978) 1--23.


\bibitem{L11} M. Labadie, {\it The stabilizing effect of growth on pattern formation}, preprint.

\bibitem{Lie96} G.M. Lieberman,  {\it Second order parabolic differential equations}, World Scientific Publishing Co., Inc., River Edge, NJ, 1996.




\bibitem{mm07} A. Madzamuse \& P.K. Maini, {\it Velocity-induced numerical solutions of reaction-diffusion systems on continuously growing domains} J. Comput. Physics, {\bf 225} (2007) 100--119.



\bibitem{mz98} F. Merle \& H. Zaag, {\it Refined uniform estimates at blow-up and applications for nonlinear heat equations}, Geom. Funct. Anal. {\bf 8(6)} (1998), 1043--1085.

\bibitem{nst06}W.-M. Ni, K. Suzuki \& I. Takagi, {\it The dynamics of a kinetic activator-inhibitor system}, J. Differential Equations, {\bf 229}(2006) 426--465.

\bibitem{n11} W.-M. Ni, \textit{The Mathematics of Diffusion}  CBMS-NSF Series, SIAM 2011.

\bibitem{pspbm04} R. G. Plaza, F. Sanchez-Garduno, P. Padilla, R.A. Barrio \& P.K.  Maini, {\it The effect of growth and curvature on pattern formation}, J. Dynam. Differential Equations {\bf 16(4)} (2004),  1093--1121.

\bibitem{qs07}P. Quittner \& Ph. Souplet, {\it Superlinear parabolic problems. Blow-up, global existence and steady states}, Birkh\"auser Verlag, Basel, 2007.



\bibitem{j87} C. Johnson {\it Numerical Solution of Partial Differential Equations by the Finite Element Method}, Cambridge University Press. (1987)


\bibitem{ss05} A. Schmidt \& K. G. Siebert, {\it Design of adaptive finite element software: the finite element toolbox ALBERTA}, Springer. (2005)

\bibitem{sa03} Y. Saad, {\it Iterative Methods for Sparse Linear Systems}, 2nd Edition, SIAM 2003.

\bibitem{t52} A.M. Turing, {\it The chemical basis of morphogenesis}, Phil. Trans. Roy. Soc. B {\bf 237} (1952), 37--72.







\end{thebibliography}
\end{document}